\documentclass[twocolumn,amsthm]{autart}

\usepackage[T1]{fontenc}
\usepackage{lmodern}
\usepackage{amsmath,amssymb,amsfonts,amsthm,mathtools}
\usepackage{mathrsfs}
\usepackage{bm}
\usepackage{enumitem}
\usepackage{graphicx}
\usepackage{etoolbox}
\patchcmd{\small}{\par}{}{}{}
\patchcmd{\footnotesize}{\par}{}{}{}
\usepackage{microtype}
\makeatletter
\@nameyeartrue
\makeatother
\let\AutartBibliography\thebibliography
\let\AutartEndBibliography\endthebibliography
\usepackage[authoryear,round]{natbib}
\let\thebibliography\AutartBibliography
\let\endthebibliography\AutartEndBibliography
\bibpunct{(}{)}{;}{a}{,}{,}
\AtBeginDocument{%
}
\AtBeginDocument{\urlstyle{same}}
\makeatletter
\let\cl@part\@empty
\makeatother
\usepackage{xcolor}
\usepackage{hyperref}
\AfterEndEnvironment{frontmatter}{\endNoHyper}
\newcommand{\doi}[1]{%
  \href{https://doi.org/#1}{doi:\nolinkurl{#1}}%
}

\hypersetup{
  colorlinks=true,
  linkcolor=blue,
  citecolor=blue,
  urlcolor=blue,
  pdfauthor={Xinyu Ma, Yang Liu, and Qingxin Meng},
  pdftitle={Recursive Stochastic Linear-Quadratic Control with Indefinite Weights for Jump-Diffusion Systems with Random Coefficients}
}

\theoremstyle{plain}
\newtheorem{theorem}{Theorem}
\newtheorem{lemma}{Lemma}
\newtheorem{proposition}{Proposition}
\newtheorem{corollary}{Corollary}
\theoremstyle{definition}
\newtheorem{definition}{Definition}
\newtheorem{assumption}{Assumption}
\newtheorem{example}{Example}
\theoremstyle{remark}
\newtheorem{remark}{Remark}

\DeclareRobustCommand{\namedref}[2]{\hyperref[#2]{#1~\ref*{#2}}}
\DeclareRobustCommand{\thmref}[1]{\namedref{Theorem}{#1}}
\DeclareRobustCommand{\lemref}[1]{\namedref{Lemma}{#1}}
\DeclareRobustCommand{\propref}[1]{\namedref{Proposition}{#1}}
\DeclareRobustCommand{\corref}[1]{\namedref{Corollary}{#1}}
\DeclareRobustCommand{\defref}[1]{\namedref{Definition}{#1}}
\DeclareRobustCommand{\assref}[1]{\namedref{Assumption}{#1}}
\DeclareRobustCommand{\remref}[1]{\namedref{Remark}{#1}}
\DeclareRobustCommand{\exref}[1]{\namedref{Example}{#1}}
\DeclareRobustCommand{\secref}[1]{\namedref{Section}{#1}}
\DeclareRobustCommand{\appref}[1]{\namedref{Appendix}{#1}}

\newcommand{\R}{\mathbb R}
\newcommand{\Pp}{\mathbb P}
\newcommand{\Qq}{\mathbb Q}
\newcommand{\F}{\mathbb F}
\newcommand{\E}{\mathbb E}
\newcommand{\Smat}{\mathbb S}
\newcommand{\Zmark}{\mathsf Z}
\newcommand{\cU}{\mathcal U}
\newcommand{\cX}{\mathcal X}

\newcommand{\cF}{\mathcal F}
\newcommand{\cP}{\mathcal P}
\newcommand{\ip}[2]{\left\langle #1,#2\right\rangle}
\newcommand{\tr}{\operatorname{tr}}
\newcommand{\essinf}{\operatorname*{ess\,inf}}

\newcommand{\Id}{I}
\newcommand{\dd}{\,\mathrm d}
\newcommand{\1}{\mathbf 1}
\newcommand{\transpose}{\mathsf T}

\makeatletter
\let\ps@copyright\ps@plain
\makeatother
\date{}

\begin{document}
\begin{frontmatter}

\runtitle{Recursive Stochastic Control with Jumps}

\title{Recursive Stochastic Linear--Quadratic Control with Indefinite Weights for Jump--Diffusion Systems with Random Coefficients\thanksref{footnoteinfo}}

\thanks[footnoteinfo]{This work was supported by the National Natural Science Foundation of China (No.~12671534; 12271158; 62673442), the National Key Research and Development Program of China (2024YFE0214000), the Natural Science Foundation of Zhejiang Province of China (Grant No.~LRG25F030002), and the Zhejiang Province Leading Geese Plan (Grant No.~2025C01056). Corresponding author: Qingxin Meng, tel.\ +86\,0572\,2321163, e-mail \texttt{mqx@huznu.edu.cn}.}

\author[ZJNU]{Xinyu Ma}\ead{mxyzfd@163.com},
\author[ZJNU,CMLAB,HZSA]{Yang Liu}\ead{liuyang@zjnu.edu.cn},
\author[HZNU]{Qingxin Meng}\ead{mqx@huznu.edu.cn}

\address[ZJNU]{School of Mathematical Sciences, Zhejiang Normal University, Jinhua 321004, China}
\address[CMLAB]{China--Mozambique Belt and Road Joint Laboratory on Smart Agriculture, Zhejiang Normal University, Jinhua 321004, China}
\address[HZSA]{Hangzhou School of Automation, Zhejiang Normal University, Hangzhou 311231, China}
\address[HZNU]{Department of Mathematical Sciences, Huzhou Normal University, Huzhou 313000, China}

\begin{keyword}
Recursive stochastic control \sep indefinite linear--quadratic control \sep jump diffusion \sep random coefficients \sep backward stochastic differential equation \sep stochastic Riccati equation \sep uniform convexity
\end{keyword}

\begin{abstract}
We study finite-horizon stochastic linear--quadratic (LQ) control for finite-activity jump diffusions with bounded random coefficients, indefinite weights and a linear recursive cost. The cost is defined by a backward stochastic differential equation (BSDE) whose generator has bounded deterministic coefficients multiplying the Brownian and jump integrands. A Brownian--Poisson change of measure and an integrating factor reduce the criterion to a nonrecursive LQ functional. On the intersection of the physical and transformed square-integrable control spaces, we establish recursive evaluation for $L^1$ quadratic data under the transformed measure and equality with the full transformed value. Under transformed uniform convexity and invertibility of the uncontrolled jump map, we identify the stochastic Riccati equation from the nonrecursive value kernel. Conditional energy estimates justify transferring its martingale coefficients to the physical measure. Completion of squares gives feedback representation, comparison and uniqueness in the feedback-admissible class. For each initial pair, the recursive value is attained exactly when the transformed optimizer has finite physical control energy. Analytical examples illustrate nonzero feedback and uniform convexity with a negative control weight.
\end{abstract}

\end{frontmatter}

\section{Introduction}\label{sec:intro}

Stochastic LQ control provides explicit links between random dynamics, quadratic costs and feedback laws. In financial and engineering models, uncertain coefficients and abrupt changes lead naturally to random-coefficient diffusions with jumps, while indefinite weights permit rewards as well as penalties. Recursive evaluation introduces a further dependence on future costs through a BSDE. This viewpoint is motivated by stochastic differential utility \citep{DuffieEpstein1992}, BSDE methods in finance \citep{ElKarouiPengQuenez1997} and generalized expectations \citep{Peng1997}. We study full-information control with homogeneous data, random coefficients, indefinite weights and a linear recursive cost, allowing the control to enter both diffusion and jump amplitudes.

For nonrecursive diffusion problems, \citet{Bismut1976,Tang2003,Tang2015} develop random-coefficient LQ and stochastic Riccati theory. Indefinite control weights are considered by \citet{ChenLiZhou1998}; solvability of indefinite stochastic Riccati equations is studied in \citet{HuZhou2003,Du2015}, and \citet{MoonDuncan2020} use completion of squares for random-coefficient indefinite LQ control. \citet{SunLiYong2016} distinguish open-loop and closed-loop solvability with deterministic coefficients. For random coefficients, \citet{SunXiongYong2021} establish Riccati solvability and closed-loop representations of open-loop optimizers under uniform convexity. Related nonrecursive results cover indefinite mean-field control \citep{LiLiYu2020} and robust nonhomogeneous systems \citep{WangXing2025}.

For BSDE-defined recursive costs, \citet{Peng1992DPP} establishes a dynamic programming principle, and \citet{Yong2008} studies LQ control under generalized expectation. \citet{ZhangZhang2021} apply Girsanov reduction to a deterministic-coefficient Brownian model. \citet{LiYong2026} formulate recursive LQ control at the natural $L^1$ level, characterize solvability through forward--backward equations and Riccati equations, and examine the control-domain issue in transferring a classical optimizer. Their principal matrices are deterministic and affine data are included. Their infinite-horizon work uses weighted $L^2$-stabilizability \citep{LiYongInfinite2026}. \citet{LiWangWu2020} treat recursive utility with delay under full and partial information.

A related dynamic-programming approach uses stochastic Hamilton--Jacobi--Bellman equations. \citet{Peng1992SHJB} studies these equations for stochastic control with random coefficients. In the recursive setting, \citet{MengDongShenTang2023} derive stochastic Hamilton--Jacobi--Bellman equations for random-coefficient jump diffusions, and \citet{DongMengZhang2024} investigate the relation between  maximum principle and dynamic programming in the Brownian case. The present LQ analysis concerns matrix Riccati equations and the integrability of their feedback controls, rather than regularity of a general nonlinear value field.

A closely related work is \citet{MoonBasar2022}, who developed a dynamic programming framework for recursive stochastic control with jump diffusions and random coefficients, and applied it to a recursive indefinite LQ problem. For the LQ application, the associated integro-type stochastic Riccati differential equation (ISRDE) and a state-feedback characterization were
derived, while the verification result assumes the existence of an ISRDE solution and the uniform positive definiteness of the effective control weight. The solvability of the ISRDE was explicitly left as an open problem \citep[Remark~6(2)]{MoonBasar2022}. Motivated by this issue, we establish solvability conditions for the associated recursive Riccati equation via an equivalent nonrecursive LQ formulation and verify the admissibility of the resulting feedback control under the original
probability measure.

For nonrecursive jump LQ problems, \citet{ZhangDongMeng2020} establish Riccati theory with nonnegative state and terminal weights and a uniformly positive definite control weight. \citet{LiWuYu2018} handle indefinite weights by a relax compensator. Stochastic flows and jump maximum-principle methods are developed in \citet{Kunita1990,TangLi1994}. We use the nonrecursive value representation and semimartingale construction of \citet{MaMeng2026} as the starting point for Riccati identification.

The recursive jump problem links three analytical issues. Square-integrable states and controls generate $L^1$ quadratic data; their evaluation can require a locally absolutely integrable jump integrand rather than a locally square-integrable one. Changing measure preserves null sets but can change the square-integrable control space, so value reduction and optimizer transfer must be treated separately. In addition, the effective control weight contains the post-jump value kernel, and the optimal jump map may be singular even when the uncontrolled jump map is invertible.

We resolve these questions on a common control domain. First, we construct the $L^1$ recursive evaluation and use bounded-control approximation to prove equality with the full transformed value. Second, under transformed uniform convexity and uncontrolled jump-map invertibility, auxiliary-cost comparison and optimal trajectories restarted between jumps identify the Riccati equation. Conditional energy estimates give the integrated moments needed for its physical-measure representation. Third, completion of squares yields feedback representation, comparison and uniqueness in the feedback-admissible class. For each initial pair, physical square integrability of the transformed optimizer is the exact attainment condition; bounded gain provides a sufficient case. The analysis assumes finite activity and bounded deterministic coefficients on the generator's Brownian and jump integrands. Portfolio benchmarks and a negative-control-weight example illustrate the results.

\namedref{Sections}{sec:setting}--\ref{sec:recursive} establish the model, evaluation and value reduction; \namedref{Sections}{sec:transformed}--\ref{sec:feedback} give the Riccati and verification results. \secref{sec:finance} presents the examples, and \appref{app:direct-check} derives physical-measure first-order conditions.

\section{Probability measures, spaces, and state dynamics}\label{sec:setting}

\subsection{The physical model}

Let $(\Omega,\cF,\F,\Pp)$ be a complete filtered probability space on a finite horizon $[0,T]$ with $0<T<\infty$.  The filtration $\F=(\cF_t)_{0\le t\le T}$ is the usual augmentation of the filtration generated jointly by a one-dimensional $\Pp$-Brownian motion $W$ and a Poisson random measure $\mu$ on $[0,T]\times\Zmark$.  Its compensator is $\nu(de)\dd t$, where $\Zmark$ is a standard Borel mark space and $\nu$ is a finite measure, and $W$ and $\mu$ are independent.  We write
$
\widetilde\mu(\dd t,\dd e)=\mu(\dd t,\dd e)-\nu(\dd e)\dd t.
$
Conditional on the total Poisson count, the ordered atom times are order statistics of independent uniform variables on $[0,T]$. Thus, almost surely, no two atoms share a time coordinate and no atom occurs at  $T$:
\begin{equation}\label{eq:time-simple}
\mu(\{s\}\times\Zmark)\le1\ (0\le s\le T),\qquad
\mu(\{T\}\times\Zmark)=0.
\end{equation}
A predictable product-null set is avoided by all atoms, by compensation. These assertions also hold under the equivalent measure below.

Write $\Smat^n$ for symmetric matrices, $|\cdot|$ for Euclidean or Frobenius norms and $\|\cdot\|$ for the operator norm. For symmetric $H,K$, $H\succeq K$ denotes the Loewner order, and $H\succ K$ strict positive definiteness of $H-K$. We use almost surely (a.s.), almost everywhere (a.e.), and $\E_t^{\mathbb M}[\cdot]=\E^{\mathbb M}[\cdot\mid\cF_t]$. For a Euclidean space $H$, $p\ge1$ and an equivalent probability $\mathbb M$, let $L^p_{\mathbb M,\cF_t}(H)$ be the $\cF_t$-measurable $H$-valued variables with finite $p$th moment, and $L^\infty_{\cF_t}(H)$ the essentially bounded ones. The spaces $\cU^p_{\mathbb M}[t,T]$ and $\mathcal S^p_{\mathbb M}[t,T]$ consist respectively of predictable $\R^m$-valued controls and adapted c\`adl\`ag (right-continuous with left limits) $\R^n$-valued states satisfying
\[
\E^{\mathbb M}\Big(\int_t^T|u_s|^2\dd s\Big)^{p/2}<\infty,
\qquad
\E^{\mathbb M}\sup_{t\le s\le T}|X_s|^p<\infty.
\]
We omit the exponent when $p=2$.

For $(t,\xi)\in[0,T]\times L^2_{\Pp,\cF_t}(\R^n)$ and $u\in\cU_{\Pp}[t,T]$, consider on $[t,T]$ the stochastic differential equation (SDE)
\begin{equation}\label{eq:state-P}
\left\{\begin{aligned}
\dd X_s={}&(A_s X_{s-}+B_s u_s)\dd s+(C_sX_{s-}+D_su_s)\dd W_s\\
&+\int_{\Zmark}[E(s,e)X_{s-}+F(s,e)u_s]\widetilde\mu(\dd s,\dd e),\\
X_t={}&\xi.
\end{aligned}\right.
\end{equation}
Here $A,C$ take values in $\R^{n\times n}$, $B,D$ in $\R^{n\times m}$, $E$ in $\R^{n\times n}$, and $F$ in $\R^{n\times m}$. Left limits specify predictable stochastic integrands; current and left-limit values agree in time integrals. Initial optimization times $t$ are deterministic. In stopped equations, integrals after a stopping time $\tau$ run over $(\tau,s]$, with an observed jump at $\tau$ included in the initial value.

\begin{assumption}\label{ass:state}
The processes $A,B,C,D$ are bounded and predictable.  The fields $E,F$ are bounded and $\cP\otimes\mathscr B(\Zmark)$-measurable, where $\cP$ is the predictable $\sigma$-field.  Moreover, $\nu(\Zmark)<\infty$.
\end{assumption}

Under \assref{ass:state}, \eqref{eq:state-P} has a unique c\`adl\`ag strong solution. For $p=2$, its standard estimate is
\begin{equation}\label{eq:state-estimate-P}
\E^{\Pp}\sup_{t\le s\le T}|X(s)|^2
\le C\left(\E^{\Pp}|\xi|^2+\E^{\Pp}\int_t^T|u(s)|^2\dd s\right).
\end{equation}
The conditional and difference estimates hold as well. For $p>2$, the appropriate estimate is
\begin{equation}\label{eq:state-estimate-higher}
\begin{aligned}
\E^{\Pp}\sup_{t\le s\le T}|X(s)|^p
\le &C_p\Bigg[\E^{\Pp}|\xi|^p+
\E^{\Pp}\Big(\int_t^T|u_s|^2\dd s\Big)^{p/2}\\
&+\E^{\Pp}\int_t^T\int_{\Zmark}|F(s,e)u_s|^p\nu(\dd e)\dd s\Bigg].
\end{aligned}
\end{equation}
These estimates follow from linear SDE bounds and maximal inequalities for compensated Poisson integrals \citep{Situ2005,MarinelliRoeckner2014}. The constants depend on the dimensions, horizon, coefficient bounds and compensator mass. The $p>2$ estimate requires finiteness of its full right-hand side, including the jump-control term; the $p=2$ estimate applies to every admissible control.

Let $Q:[0,T]\times\Omega\to\Smat^n$, $R:[0,T]\times\Omega\to\Smat^m$, and $S:[0,T]\times\Omega\to\R^{m\times n}$ be predictable, and let $G:\Omega\to\Smat^n$ be $\cF_T$-measurable.  Define
\begin{equation}\label{eq:running-cost}
\ell(s,x,u):=\ip{Q(s)x}{x}+2\ip{S(s)x}{u}+\ip{R(s)u}{u}.
\end{equation}

\begin{assumption}\label{ass:weights}
The processes $Q,R,S$ and the random matrix $G$ are essentially bounded. No positivity condition is imposed on $Q,R,G$.
\end{assumption}

\subsection{The recursive coefficients and the equivalent measure}

The recursive generator will contain a utility-rate process $a$, a Brownian aggregator $\beta$, and a jump aggregator $\eta$.

\begin{assumption}\label{ass:recursive-coeff}
The process $a$ is bounded and predictable.  The function $\beta:[0,T]\to\R$ is deterministic, Borel measurable and bounded.  The function $\eta:\Zmark\to\R$ is deterministic, Borel measurable and bounded, and there exists $\varepsilon_\eta>0$ such that
$1+\eta(e)\ge\varepsilon_\eta$ for $\nu$-a.e. $e\in\Zmark$.
\end{assumption}

Define the Dol\'eans exponential density by
\begin{equation}\label{eq:density-explicit}
\begin{aligned}
\mathcal E(t)&=\exp\Bigg\{\int_0^t\beta(s)\dd W(s)-\frac12\int_0^t\beta(s)^2\dd s\\
&+\int_0^t\int_{\Zmark}\log(1+\eta(e))\mu(\dd s,\dd e)-t\int_{\Zmark}\eta(e)\nu(\dd e)\Bigg\}.
\end{aligned}
\end{equation}

\begin{lemma}\label{lem:density-moments}
Under \assref{ass:recursive-coeff}, $\mathcal E$ is a strictly positive uniformly integrable $\Pp$-martingale.  For every $q\in\R$,
\begin{equation}\label{eq:density-moment-formula}
\begin{aligned}
\E^{\Pp}[\mathcal E(T)^q]&=\exp\Bigg\{\frac{q(q-1)}2\int_0^T\beta(s)^2\dd s\\
&+T\int_{\Zmark}\big[(1+\eta(e))^q-1-q\eta(e)\big]\nu(\dd e)\Bigg\}<\infty.
\end{aligned}
\end{equation}
Consequently, if $\Qq$ is defined by
$(\dd\Qq/\dd\Pp)|_{\cF_T}=\mathcal E(T)$, then $\Pp$ and $\Qq$ are equivalent and $\mathcal E(T)^{-1}$ has moments of every order under $\Qq$.
\end{lemma}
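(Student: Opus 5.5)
The plan is to exploit independence and the deterministic nature of $\beta$ and $\eta$. This lets us split $\mathcal E(t)$ into a Brownian factor and a Poisson factor and compute everything explicitly. Write $\mathcal E(t)=M_1(t)M_2(t)$ with
\[
M_1(t)=\exp\Big\{\int_0^t\beta\dd W-\tfrac12\int_0^t\beta^2\dd s\Big\},\qquad
M_2(t)=\exp\Big\{\int_0^t\!\!\int_{\Zmark}\log(1+\eta)\,\mu(\dd s,\dd e)-t\int_{\Zmark}\eta\dd\nu\Big\}.
\]
The lower bound $1+\eta\ge\varepsilon_\eta>0$ of \assref{ass:recursive-coeff} makes $\log(1+\eta)$ bounded, so $M_2$ is well defined. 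Since $\nu(\Zmark)<\infty$, the random measure $\mu$ has a.s. finitely many atoms, and therefore $M_2(t)=e^{-t\int\eta\dd\nu}\prod_{\text{atoms }(s_i,e_i),\,s_i\le t}(1+\eta(e_i))$. Strict positivity of $\mathcal E$ is then immediate, because both factors are positive.

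For the moments I compute $\E^{\Pp}[M_1(T)^q]$ and $\E^{\Pp}[M_2(T)^q]$ separately and multiply them using the independence of $W$ and $\mu$. Since $\int_0^T\beta\dd W$ is centered Gaussian with variance $\int_0^T\beta^2\dd s$ ($\beta$ deterministic and bounded), we get $\E[M_1(T)^q]=\exp\{\tfrac{q^2-q}{2}\int_0^T\beta^2\dd s\}$. For the Poisson factor, I use the exponential (Campbell) formula for Poisson random measures: for bounded measurable $f$,
\[
\E\exp\Big\{\int_0^T\!\!\int_{\Zmark} f(e)\,\mu(\dd s,\dd e)\Big\}=\exp\Big\{T\int_{\Zmark}(e^{f(e)}-1)\,\nu(\dd e)\Big\}.
\]
Taking $f=q\log(1+\eta)$ gives $\E[M_2(T)^q]=\exp\{T\int[(1+\eta)^q-1-q\eta]\dd\nu\}$. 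The integrand is bounded because $\varepsilon_\eta\le 1+\eta\le 1+\sup|\eta|$, and $\nu$ is finite, so the expression is finite. Multiplying the two gives \eqref{eq:density-moment-formula}.

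For the martingale property, the same computations applied on $[s,t]$ give the result. The increments of $W$ and $\mu$ on $(s,t]$ are independent of $\cF_s$; here I use that $\F$ is the augmented filtration generated jointly by $W$ and $\mu$. Hence $\E[\mathcal E(t)/\mathcal E(s)\mid\cF_s]$ equals the product of the two formulas with $q=1$, which is $1$. So $\mathcal E$ is a $\Pp$-martingale with $\E\mathcal E(t)=1$. Uniform integrability on the finite horizon follows since $\mathcal E(t)=\E[\mathcal E(T)\mid\cF_t]$; alternatively, the moment bound with $q=2$ gives $L^2$-boundedness via Doob. Equivalence of $\Pp$ and $\Qq$ follows from $\mathcal E(T)>0$ a.s. and $\E\mathcal E(T)=1$.

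Finally, for every $p\in\R$ we have $\E^{\Qq}[\mathcal E(T)^{-p}]=\E^{\Pp}[\mathcal E(T)^{1-p}]<\infty$ by the formula with $q=1-p$. This gives all moments of $\mathcal E(T)^{-1}$ under $\Qq$. The only delicate point is justifying the Campbell formula for negative $q$ and the independence of increments from $\cF_s$ under the augmented joint filtration. I would handle the first by conditioning on the Poisson count and using that, given the count, the marks are i.i.d. with law $\nu/\nu(\Zmark)$ and the times uniform, as noted before \eqref{eq:time-simple}. This reduces the computation to a power series in $T\nu(\Zmark)$, which converges because the factors $(1+\eta)^q$ are bounded.
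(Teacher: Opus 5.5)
Your proposal is correct and follows essentially the same route as the paper: you factor $\mathcal E$ into independent Brownian and Poisson parts, apply the Gaussian moment formula and the Poisson exponential formula, use independent increments for the martingale property and closedness at $T$ for uniform integrability, and obtain the $\Qq$-moments from $\E^{\Qq}[\mathcal E(T)^{-p}]=\E^{\Pp}[\mathcal E(T)^{1-p}]$. Your extra remarks on why the exponential formula is valid for negative $q$ (bounded integrand, finite $\nu$) supply a detail the paper leaves implicit.
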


\begin{proof}
The Brownian and Poisson factors in \eqref{eq:density-explicit} are independent. The Gaussian moment formula and the exponential formula for a Poisson random measure give \eqref{eq:density-moment-formula}. For every real $q$, its integrals are finite because $\beta,\eta$ are bounded and $1+\eta\ge\varepsilon_\eta>0$. Independent increments give the martingale property, and $q=1$ gives unit expectation at $T$; hence the martingale is uniformly integrable. Its paths are strictly positive. Finally,
$\E^{\Qq}[\mathcal E_T^{-q}]=\E^{\Pp}[\mathcal E_T^{1-q}]<\infty$.
\end{proof}

By the Girsanov theorem for Brownian motion and Poisson random measures (see, e.g., \citet{Protter2005}),
\begin{equation}\label{eq:Q-noises}
\begin{aligned}
W^{\circ}(t)&:=W(t)-\int_0^t\beta(s)\dd s,\\
\widetilde\mu^{\circ}(\dd t,\dd e)&:=\widetilde\mu(\dd t,\dd e)-\eta(e)\nu(\dd e)\dd t
\end{aligned}
\end{equation}
are, respectively, a $\Qq$-Brownian motion and the compensated version of $\mu$ under $\Qq$.  The new compensator is
$\nu^{\circ}(\dd e)\dd t:=(1+\eta(e))\nu(\dd e)\dd t$.
The measure $\nu^{\circ}$ is finite and equivalent to $\nu$, with density bounded above and away from zero. Deterministic $\beta$ preserves the Brownian path information, while deterministic $\eta$ gives a deterministic Poisson compensator. Factorization of the density preserves independence under $\Qq$, giving the Brownian--Poisson representation used below. The random utility rate $a$ enters only the bounded finite-variation factor \eqref{eq:kappa}.

Define the transformed drift coefficients
\begin{align}
A^{\circ}(t)&:=A(t)+\beta(t)C(t)
+\int_{\Zmark}\eta(e)E(t,e)\nu(\dd e),\notag\\
B^{\circ}(t)&:=B(t)+\beta(t)D(t)
+\int_{\Zmark}\eta(e)F(t,e)\nu(\dd e).
\label{eq:transformed-drift}
\end{align}
Substitution of \eqref{eq:Q-noises} into \eqref{eq:state-P} gives the $\Qq$-equation
\begin{equation}\label{eq:state-Q}
\left\{\begin{aligned}
\dd X_s={}&(A_s^\circ X_{s-}+B_s^\circ u_s)\dd s+(C_sX_{s-}+D_su_s)\dd W_s^\circ\\
&+\int_{\Zmark}[E(s,e)X_{s-}+F(s,e)u_s]\widetilde\mu^\circ(\dd s,\dd e),\\
X_t={}&\xi.
\end{aligned}\right.
\end{equation}
For a control and initial state that are square integrable under both measures, equations \eqref{eq:state-P} and \eqref{eq:state-Q} describe the same pathwise state process.  For a general $u\in\cU_{\Qq}[t,T]$, equation \eqref{eq:state-Q} is well posed in $L^2(\Qq)$.

\section{The recursive criterion and exact LQ reduction}\label{sec:recursive}

\subsection{The common control domain}

Set
\begin{equation}\label{eq:common-spaces}
\begin{aligned}
\cX_t^2&:=L^2_{\Pp,\cF_t}(\R^n)\cap L^2_{\Qq,\cF_t}(\R^n),\\
\cU_{\Pp,\Qq}[t,T]&:=\cU_{\Pp}[t,T]\cap\cU_{\Qq}[t,T].
\end{aligned}
\end{equation}
Equip each intersection with the Hilbert norm whose squared value is the sum of the two squared $L^2$ norms. The $\Pp$ requirement preserves the physical control energy, while the $\Qq$ requirement ensures integrability of the quadratic evaluation data. The graph spaces are Hilbert: the two $L^2$ limits agree by equivalence of the measures. They contain all bounded initial states and bounded predictable controls. Let
\begin{equation}\label{eq:kappa}
\kappa(t):=e^{\int_0^t a(s)\dd s},\qquad
0<\underline\kappa\le\kappa(t)\le\overline\kappa,
\end{equation}
where $\underline\kappa=e^{-\|a\|_\infty T}$ and $\overline\kappa=e^{\|a\|_\infty T}$ are deterministic bounds holding simultaneously on $[0,T]$, $\Pp$-a.s.

For $(t,\xi)\in[0,T]\times\cX_t^2$ and $u\in\cU_{\Pp,\Qq}[t,T]$, let $X$ be the corresponding state and consider
\begin{equation}\label{eq:recursive-BSDE}
\left\{\begin{aligned}
\dd Y_s={}&-\Big[a_sY_s+\beta_sZ_s+\ell(s,X_s,u_s)\\
&\qquad+\int_{\Zmark}\eta(e)U(s,e)\nu(\dd e)\Big]\dd s\\
&+Z_s\dd W_s+\int_{\Zmark}U(s,e)\widetilde\mu(\dd s,\dd e),\\
Y_T={}&\ip{GX_T}{X_T}.
\end{aligned}\right.
\end{equation}
The terminal quadratic term and the integrated running cost belong to $L^1(\Qq)$, so we use the following local integration spaces.

All local integration spaces refer to the closed interval $[t,T]$. For $\mathbb M\in\{\Pp,\Qq\}$ and a finite mark measure $m$, the spaces $\mathcal H^2_{\rm loc}(\mathbb M)$ and $\mathcal L^1_{\rm loc}(m,\mathbb M)$ consist of predictable $Z$ and $U$, respectively, whose quadratic-energy and absolute-compensator integrals are pathwise finite at $T$ and satisfy
$
\E^{\mathbb M}\int_t^{\rho_j}|Z_s|^2\dd s<\infty$, $
\E^{\mathbb M}\int_t^{\rho_j}\int_{\Zmark}|U(s,e)|m(\dd e)\dd s<\infty
$
for increasing stopping times $\rho_j\le T$ that eventually equal $T$ almost surely. Localizers for the two integrands may be combined by taking their minimum. The jump integral is locally the atom sum minus its compensator; no compensator-based local $L^2$ condition is imposed on $U$.

\begin{lemma}\label{lem:local-measures}
Under \assref{ass:recursive-coeff}, the spaces satisfy
\begin{equation}\label{eq:local-class-equality}
\mathcal H^2_{\rm loc}(\Pp)=\mathcal H^2_{\rm loc}(\Qq),\quad\mathcal L^1_{\rm loc}(\nu,\Pp)=\mathcal L^1_{\rm loc}(\nu^\circ,\Qq).
\end{equation}
The noise substitutions \eqref{eq:Q-noises} hold for their stochastic integrals under both measures.
\end{lemma}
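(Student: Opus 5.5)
The plan is to prove each inclusion by localization, using that $\mathcal E$ is a strictly positive c\`adl\`ag martingale and that $\nu^\circ$ and $\nu$ have a density bounded above and away from zero.

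\emph{Pathwise conditions.} The pathwise conditions (finiteness at $T$ of $\int_t^T|Z_s|^2\dd s$ and of $\int_t^T\int_\Zmark|U|\,m(\dd e)\dd s$) are $\Pp$-a.s.\ iff $\Qq$-a.s., since the measures are equivalent. They are insensitive to replacing $\nu$ by $\nu^\circ$, because $\varepsilon_\eta\,\nu\le\nu^\circ\le(1+\|\eta\|_\infty)\nu$.

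\emph{Choice of localizer.} The only issue is the existence of a localizing sequence under the other measure. I would not transfer the given localizers. Instead I would use the canonical ones
\[
\rho_j:=\inf\Big\{s\ge t:\int_t^s|Z_r|^2\dd r\ge j\ \text{or}\ \int_t^s\int_\Zmark|U(r,e)|\nu(\dd e)\dd r\ge j\Big\}\wedge T .
\]
These are stopping times because the integrals are continuous and adapted. They increase to $T$ and eventually equal $T$ a.s.\ by pathwise finiteness. Since the integrals are continuous in $s$, the stopped integrals are bounded by $j$. Hence
\[
\E^{\mathbb M}\int_t^{\rho_j}|Z|^2\dd s\le j,
\qquad
\E^{\mathbb M}\int_t^{\rho_j}\!\int|U|\,m(\dd e)\dd s\le (1+\|\eta\|_\infty)j
\]
under either measure. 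This already shows that both local classes reduce to the pathwise conditions, which gives \eqref{eq:local-class-equality}. The given localizers $\rho_j$ were arbitrary, and the canonical ones always exist once the pathwise finiteness holds.

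\emph{Noise substitutions.} For $Z\in\mathcal H^2_{\rm loc}$, the Brownian integral $\int Z\dd W$ is a $\Pp$-local martingale. By Girsanov for continuous local martingales, $\int Z\dd W-\int Z\beta\dd s$ is a $\Qq$-local martingale equal to $\int Z\dd W^\circ$. This uses the facts that $\beta$ is bounded and that $\int|Z\beta|\dd s<\infty$ pathwise by Cauchy--Schwarz.

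For the jump part, on $[t,\rho_j]$ the $\Pp$-integral equals the finite atom sum $\sum U(s,\Delta)$ minus $\int\!\int U\nu\dd s$; the atom sum is finite by finite activity. The $\Qq$-compensated integral equals the same atom sum minus $\int\!\int U\nu^\circ\dd s$. Subtracting gives
\[
\int U\,\widetilde\mu^\circ=\int U\,\widetilde\mu-\int\!\int U\eta\,\nu\dd s ,
\]
which is finite since $|\eta|$ is bounded. Both sides are defined pathwise, so the identity holds a.s.\ under both measures and then on all of $[t,T]$ by letting $j\to\infty$.

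\emph{Main obstacle.} The remaining point is to confirm that the pathwise (atom-sum-minus-compensator) integral coincides with the stochastic integral in the $\mathcal L^1_{\rm loc}$ sense under each measure. To see this, note that with $\E^{\mathbb M}\int\!\int|U|m\dd s<\infty$ on $[t,\rho_j]$, the atom sum has finite expectation equal to the compensator integral, so the difference is a true martingale. This is the standard $L^1$ compensation identity for the predictable $U$, which the paper takes as the definition of the integral.
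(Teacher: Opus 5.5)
Your proof is correct and takes essentially the paper's route. Pathwise finiteness transfers because the measures are equivalent and $\nu^\circ$ has a density bounded above and away from zero with respect to $\nu$. A continuous energy clock stopped at level $j$ then supplies localizers valid under both measures. The jump substitution follows by subtracting compensators from the locally absolutely integrable atom sums, and the Brownian one has a drift correction that is finite by Cauchy--Schwarz.

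There are only cosmetic differences. You stop the clock on the $\nu$-energy and bound the $\nu^\circ$ integral through the density, whereas the paper stops $\int|U|(\nu+\nu^\circ)$ directly. You also invoke Girsanov for continuous local martingales where the paper cites local approximation.
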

\begin{proof}
Pathwise finiteness is preserved by equivalent probabilities and by $c_-\nu\le\nu^\circ\le c_+\nu$. Stop the continuous adapted clock
\[
H_s=\int_t^s\Big[|Z_r|^2+\int_{\Zmark}|U(r,e)|(\nu+\nu^\circ)(\dd e)\Big]\dd r
\]
on reaching $j$, capped at $T$. The stopped clock is at most $j$ under either measure and the stops eventually equal $T$. Predictability of $\1_{(t,\rho_j]}$ and compensation give local absolute integrability of the atom sums. Subtracting compensators proves the jump substitution; local approximation proves the Brownian one. Their drift corrections are finite by Cauchy--Schwarz and $H_T<\infty$. Countably many localizers give indistinguishable identities on $[t,T]$.
\end{proof}

\begin{lemma}\label{lem:L1-representation}
Let $\zeta\in L^1_{\Qq,\cF_T}(\R)$ and $M(s)=\E^{\Qq}[\zeta\mid\cF_s]$. There are predictable $Z\in\mathcal H^2_{\mathrm{loc}}(\Qq)$ and $U\in\mathcal L^1_{\mathrm{loc}}(\nu^{\circ},\Qq)$ such that
\begin{equation}\label{eq:L1-martingale-representation}
M_s=M_t+\int_t^sZ_r\dd W_r^\circ+
\int_t^s\int_{\Zmark}U(r,e)\widetilde\mu^\circ(\dd r,\dd e).
\end{equation}
They are unique up to their corresponding predictable null sets.
\end{lemma}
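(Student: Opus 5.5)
The plan is to obtain \eqref{eq:L1-martingale-representation} by truncation from the classical $L^2$ martingale representation under $\Qq$, and to control the limit through localization by stopping times. First I will check that the representation property holds under $\Qq$ for the pair $(W^\circ,\widetilde\mu^\circ)$. Since $\beta$ and $\eta$ are deterministic, $W^\circ$ is a $\Qq$-Brownian motion and $\mu$ is a $\Qq$-Poisson random measure with compensator $\nu^\circ(\dd e)\dd t$, independent of $W^\circ$. The filtration generated by $(W^\circ,\mu)$ coincides with that generated by $(W,\mu)$ because $W-W^\circ$ is deterministic, and the $\Pp$- and $\Qq$-null sets agree, so the augmentations coincide. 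The standard Brownian--Poisson representation theorem then applies in $L^2(\Qq)$. For $\zeta_k:=(\zeta\wedge k)\vee(-k)$ it yields predictable $Z^k$ and $U^k$ with
\[
\E^{\Qq}\int_t^T\Big[|Z^k_r|^2+\int_{\Zmark}|U^k(r,e)|^2\nu^\circ(\dd e)\Big]\dd r<\infty
\]
and $M^k_s=M^k_t+\int_t^s Z^k\dd W^\circ+\int_t^s\int U^k\dd\widetilde\mu^\circ$, where $M^k_s=\E^{\Qq}_s[\zeta_k]$.

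Next I pass to the limit. Since $\zeta_k\to\zeta$ in $L^1(\Qq)$, Doob's weak-type maximal inequality gives $\sup_{s}|M^k_s-M_s|\to0$ in $\Qq$-probability. Passing to a subsequence, the convergence holds a.s. with $\sum_k\E^{\Qq}|\zeta_{k+1}-\zeta_k|<\infty$. The martingale $M$ is c\`adl\`ag and has a finite number of jumps on $[t,T]$, so it is locally bounded once we stop at jump times. I set
\[
\rho_j:=\inf\{s\ge t:\ |M_s|\ge j\ \text{or}\ \mu((t,s]\times\Zmark)\ge j\}\wedge T,
\]
and combine this with the analogous stopping for $\sup_k|M^k|$, which is finite a.s. by the summability above. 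These $\rho_j$ eventually equal $T$.

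\emph{The main obstacle} is to identify the limits of $Z^k$ and $U^k$. For the continuous part, $\langle M^{k,c}-M^{l,c}\rangle$ on $(t,\rho_j]$ is controlled by the Burkholder--Davis--Gundy inequality with exponent $1$. Applied to the stopped martingale difference, BDG for $p=1$ bounds $\E^{\Qq}[M^k-M^l]^{1/2}_{\rho_j}$ by $\E^{\Qq}\sup_{s\le\rho_j}|M^k_s-M^l_s|$, and by Doob's inequality this tends to zero. This makes $Z^k$ Cauchy in the local sense in $L^1(\Qq;L^2(\dd s))$, which gives a limit $Z$ with $\int_t^T|Z|^2\dd s<\infty$ a.s. Using the stopping at the level of $\int|Z|^2$ then puts $Z$ in $\mathcal H^2_{\rm loc}(\Qq)$.

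For the jumps, the jump of $M$ at an atom $(s,e)$ is $U(s,e)$. I will define $U$ as the predictable limit of $U^k$ along atoms, namely as the $\Qq$-a.e. limit of $U^k$ in $L^1(\mu\otimes\Qq)$ restricted to $(t,\rho_j]$. The quadratic variation bounds the sum of $|U^k-U^l|$ over the finitely many atoms on $(t,\rho_j]$, since $\sum|x_i|\le\sqrt{N}\,(\sum|x_i|^2)^{1/2}$ with at most $j$ atoms. By compensation, $\E^{\Qq}\int_t^{\rho_j}\int|U^k-U^l|\nu^\circ\dd r=\E^{\Qq}\int_t^{\rho_j}\int|U^k-U^l|\mu(\dd r,\dd e)$, which tends to zero. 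The limit $U$ therefore lies in $\mathcal L^1_{\rm loc}(\nu^\circ,\Qq)$, and the representation passes to the limit on each $[t,\rho_j]$, hence on $[t,T]$.

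Uniqueness follows from the same structure. If $\int Z\dd W^\circ+\int\int U\dd\widetilde\mu^\circ\equiv0$, the continuous part vanishes, so $\int|Z|^2\dd s=0$. The jumps vanish too, so $U=0$ on atoms $\mu$-a.e. Compensation then gives $U=0$ $\nu^\circ\dd s\,\dd\Qq$-a.e. on each $(t,\rho_j]$.
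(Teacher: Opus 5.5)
\textbf{A step in your main estimate is wrong.} Your truncation route differs from the paper's and can be made rigorous, but the step you yourself call the main obstacle is not justified as written.

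After the Davis ($p=1$ BDG) inequality you claim $\E^{\Qq}\sup_{s\le\rho_j}|M^k_s-M^l_s|\to0$ ``by Doob's inequality''. Doob's maximal inequality has no $L^1$ version. From $\zeta_k\to\zeta$ in $L^1(\Qq)$ you only get the weak-type bound, that is, convergence of the supremum in probability. The $L^2$ inequality does not help either, since $\E^{\Qq}|\zeta_k-\zeta_l|^2$ does not tend to zero when $\zeta\notin L^2(\Qq)$.

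Your stopping does not restore square integrability. The stopped value $M_{\rho_j}$ can carry a non-square-integrable jump: take $\zeta=f(e_1)\1_{\{\sigma_1\le T\}}$ with $f\in L^1(\nu^\circ)$ but $f\notin L^2(\nu^\circ)$, where $(\sigma_1,e_1)$ is the first atom. This is exactly the $L^1$ phenomenon the lemma is about.

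\textbf{Repair.} Use the localization you already built, but split
$\sup_{s\le\rho_j}|M^k_s-M^l_s|\le\sup_{s<\rho_j}|M^k_s-M^l_s|+|M^k_{\rho_j}-M^l_{\rho_j}|$.
\begin{itemize}
\item The first term is at most $2j$ and tends to zero a.s.\ along your subsequence, so dominated convergence gives $L^1$ convergence.
\item The second term satisfies $\E^{\Qq}|M^k_{\rho_j}-M^l_{\rho_j}|\le\E^{\Qq}|\zeta_k-\zeta_l|$ by optional sampling.
\end{itemize}
A related point: summability of $\E^{\Qq}|\zeta_{k+1}-\zeta_k|$ does not by itself bound $\sup_k\sup_s|M^k_s|$, since that would again need a maximal inequality. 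It is cleaner to note that $|M^k_s|\le N_s:=\E^{\Qq}_s|\zeta|$ simultaneously for all $k,s$, and to stop the c\`adl\`ag martingale $N$ at level $j$. With these changes, Davis's inequality, the bound of at most $j$ atoms on $(t,\rho_j]$, and compensation give the Cauchy properties of $Z^k$ and $U^k$ that you claim. Your limit passage and your uniqueness argument then go through.

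\textbf{Comparison with the paper.} The paper does not approximate at all. It applies the Jacod--Shiryaev representation theorem for arbitrary local martingales (Theorem III.4.34). That theorem directly supplies $Z$ and a jump integrand $U$ in the general class $G_{\rm loc}(\mu)$. The only work is to upgrade $U$ to $\mathcal L^1_{\rm loc}(\nu^\circ,\Qq)$. The paper stops at the $k$th atom, $\rho_k=T\wedge\sigma_k$, and bounds $\E^{\Qq}\sum_{t<s\le\rho_k}|\Delta M_s|\le[k+\lambda_\circ(T-t)]\E^{\Qq}|\zeta|$, using optional sampling for post-jump values and compensation for the pre-jump values $M_{s-}$. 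It then identifies the stopped jump integral with the atom sum minus its compensator (Proposition II.1.28).

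Your argument needs only the $L^2$ representation theorem and Davis's inequality, and it constructs $(Z,U)$ as explicit limits, so it is more self-contained. The price is the delicate localization above. The paper's argument is shorter, its jump localizers depend only on the Poisson atoms, and it bounds the local $L^1$ jump energy by $\E^{\Qq}|\zeta|$ alone. The uniqueness arguments in the two proofs coincide.
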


\begin{proof}
Since $\zeta\in L^1(\Qq)$, $M_s=\E_s^{\Qq}\zeta$ has a uniformly integrable martingale version on $[0,T]$. Choose a Borel isomorphism $b:\Zmark\to B\subset(1,2)$. The pair consisting of $W^\circ$ and $\int_0^\cdot\int b(e)\mu(\dd s,\dd e)$ has independent increments; its nonzero jump sizes recover all marks, so it generates the same augmented filtration. Keeping the process and filtration constant after $T$, and adjoining the completion null sets at time zero, gives condition~III.2.12 of \citet{JacodShiryaev2003} on $\cF_T$. Their Theorem~III.4.34(a), p.~189, supplies the Brownian integrand and a general compensated-random-measure integrand. Restrict the representation to $[t,T]$. Since the compensator has no time atoms, its jumps satisfy
\[
\Delta M_s=\int_{\Zmark}U(s,e)\mu(\{s\},\dd e).
\]
By \eqref{eq:time-simple}, this is $U(\sigma_j,e_j)$ at the $j$th atom after $t$, and there are no other jumps. Set $\rho_k=T\wedge\sigma_k$, with $\sigma_k=\infty$ when necessary, and $\lambda_\circ=\nu^\circ(\Zmark)$. Optional sampling and $\E^{\Qq}|M_{s-}|\le\E^{\Qq}|\zeta|$ give
\[
\begin{aligned}
\E^{\Qq}\sum_{t<s\le\rho_k}|\Delta M_s|
&\le\sum_{j=1}^k\E^{\Qq}[\1_{\{\sigma_j\le T\}}|M_{\sigma_j\wedge T}|]\\
&\quad+\lambda_\circ\E^{\Qq}\int_t^T|M_{s-}|\dd s\\
&\le[k+\lambda_\circ(T-t)]\E^{\Qq}|\zeta|.
\end{aligned}
\]
For the pre-jump term we used compensation; the bound on $M_{s-}$ follows by Fatou along deterministic times. Apply compensation first to the bounded nonnegative predictable field $\1_{\{t<s\le\rho_k\}}(|U|\wedge N)$, and then let $N\uparrow\infty$. The jump identity and simplicity imply
\[
\E^{\Qq}\int_t^{\rho_k}\int_{\Zmark}|U|\nu^\circ(\dd e)\dd s
=\E^{\Qq}\sum_{t<s\le\rho_k}|\Delta M_s|<\infty.
\]
Proposition~II.1.28 of \citet{JacodShiryaev2003} identifies each stopped jump integral with the atom sum minus its compensator. Finite activity makes $\rho_k=T$ eventually almost surely. Combining these stops with continuous-energy localizers proves the claimed representation on the closed horizon. Continuous quadratic variation identifies $Z$ uniquely, and compensation of the indicator of nonzero jump-integrand differences identifies $U$ in product measure.
\end{proof}

\begin{definition}\label{def:recursive-solution}
A triple $(Y,Z,U)$ is an \emph{integrable recursive solution} of \eqref{eq:recursive-BSDE} if $Y$ is c\`adl\`ag and adapted, $Z,U$ belong to the common local spaces of \lemref{lem:local-measures}, the physical-noise integral equation \eqref{eq:recursive-BSDE} holds indistinguishably on $[t,T]$, and
\begin{equation}\label{eq:UI-class}
\begin{aligned}
\Big\{\kappa(\tau)Y(\tau)+\int_t^\tau\kappa(s)\ell(s,X(s),u(s))\dd s:{}&\\
\tau\text{ is an }\F\text{-stopping time in }[t,T]&\Big\}
\end{aligned}
\end{equation}
forms a uniformly integrable family under $\Qq$. The uniformly integrable family is taken under $\Qq$ (class $(D)$).
\end{definition}

\begin{theorem}\label{thm:recursive-representation}
Let \namedref{Assumptions}{ass:state}, \ref{ass:weights} and \ref{ass:recursive-coeff} hold.  For every $(t,\xi)\in[0,T]\times\cX_t^2$ and $u\in\cU_{\Pp,\Qq}[t,T]$, equation \eqref{eq:recursive-BSDE} has a unique integrable recursive solution.  Moreover,
\begin{equation}\label{eq:recursive-representation}
\begin{aligned}
Y_s=\kappa_s^{-1}\E_s^{\Qq}\Big[&\kappa_T\ip{GX_T}{X_T}\\
&+\int_s^T\kappa_r\ell(r,X_r,u_r)\dd r\Big],\qquad s\in[t,T].
\end{aligned}
\end{equation}
In particular, the recursive cost
$J^{\mathrm r}(t,\xi;u):=Y(t)$
belongs to $L^1_{\Qq,\cF_t}(\R)$ and is given by \eqref{eq:recursive-representation} at $s=t$.
\end{theorem}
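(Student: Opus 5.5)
The plan is to construct the solution explicitly from the $\Qq$-martingale in \eqref{eq:recursive-representation}, and then prove uniqueness by showing that any integrable recursive solution must coincide with this process.

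\textbf{Integrability of the data.} Fix $(t,\xi)$ and $u\in\cU_{\Pp,\Qq}[t,T]$, and let $X$ be the common pathwise state. Since $\xi\in L^2(\Qq)$ and $u\in\cU_{\Qq}[t,T]$, the $\Qq$-analogue of \eqref{eq:state-estimate-P}, applied to \eqref{eq:state-Q}, gives $\E^{\Qq}\sup_s|X_s|^2<\infty$. By \assref{ass:weights} and the bounds on $\kappa$ in \eqref{eq:kappa}, the variable
\[
\zeta:=\kappa_T\ip{GX_T}{X_T}+\int_t^T\kappa_r\ell(r,X_r,u_r)\dd r
\]
lies in $L^1_{\Qq,\cF_T}(\R)$. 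Moreover, $\sup_\tau\big|\int_t^\tau\kappa\ell\,\dd r\big|$ is dominated by $\int_t^T\kappa|\ell|\dd r\in L^1(\Qq)$.

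\textbf{Existence.} Set $M_s=\E^{\Qq}_s\zeta$ and apply \lemref{lem:L1-representation} to get $(\widetilde Z,\widetilde U)$. Define
\[
\kappa_sY_s:=M_s-\int_t^s\kappa_r\ell_r\dd r,\qquad Z:=\kappa^{-1}\widetilde Z,\qquad U:=\kappa^{-1}\widetilde U.
\]
Since $\kappa$ is continuous, bounded and bounded away from zero, $Z$ and $U$ stay in the local classes, which are the common classes by \lemref{lem:local-measures}. Then $Y$ is càdlàg and adapted, and $Y_T=\ip{GX_T}{X_T}$.

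Itô's product rule for $\kappa^{-1}(\kappa Y)$ is available because $\kappa$ has finite variation and is continuous, so no covariation term appears. It gives
\[
\dd Y=-(aY+\ell)\dd s+Z\dd W^\circ+\int U\widetilde\mu^\circ.
\]
Substituting \eqref{eq:Q-noises}, which \lemref{lem:local-measures} justifies for these integrands under both measures, produces exactly the drift terms $\beta Z+\int\eta U\nu(\dd e)$. Hence \eqref{eq:recursive-BSDE} holds in physical noise.

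For the class $(D)$ property, the family \eqref{eq:UI-class} equals $\{M_\tau\}$. This family is uniformly integrable under $\Qq$ because $\{\E^{\Qq}[\zeta\mid\cF_\tau]\}$ is uniformly integrable, by optional sampling for the closed martingale. Evaluating at $s$ gives \eqref{eq:recursive-representation}, and at $s=t$ gives $J^{\mathrm r}\in L^1_{\Qq,\cF_t}$.

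\textbf{Uniqueness.} This is the main obstacle, since the integrands are only locally integrable. Let $(Y',Z',U')$ be any integrable recursive solution. Reverse the computation above: by \lemref{lem:local-measures}, rewrite the equation with $\Qq$-noises and apply the product rule to obtain
\[
N_s:=\kappa_sY'_s+\int_t^s\kappa\ell\,\dd r=N_t+\int_t^s\kappa Z'\dd W^\circ+\int_t^s\!\!\int\kappa U'\widetilde\mu^\circ.
\]
Both stochastic integrals are $\Qq$-local martingales. For the Brownian part this follows from $\mathcal H^2_{\rm loc}$. For the jump part, use the $\mathcal L^1_{\rm loc}(\nu^\circ,\Qq)$ localization: by Proposition~II.1.28 of \citet{JacodShiryaev2003}, the stopped integral equals the atom sum minus its compensator, each of finite $\Qq$-expectation, and compensation gives the martingale property of the stopped process.

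Thus $N$ is a $\Qq$-local martingale, and by Definition~\ref{def:recursive-solution} it is of class $(D)$. A class $(D)$ local martingale is a uniformly integrable martingale: take the localizing sequence $\rho_j$, pass to the limit in $N_{s\wedge\rho_j}=\E^{\Qq}_s N_{\rho_j}$ using uniform integrability, and use that $\rho_j=T$ eventually. Hence $N_s=\E^{\Qq}_sN_T=\E^{\Qq}_s\zeta=M_s$, so $Y'=Y$ indistinguishably by right-continuity. The uniqueness part of \lemref{lem:L1-representation} then identifies $\kappa Z'=\widetilde Z$ and $\kappa U'=\widetilde U$ up to predictable null sets, so $(Z',U')=(Z,U)$.
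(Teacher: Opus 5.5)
Your proposal is correct and follows the paper's own route. You represent $M_s=\E^{\Qq}_s\zeta$ via \lemref{lem:L1-representation}, rescale by $\kappa$, and substitute the noises of \eqref{eq:Q-noises}. For uniqueness you show that $\kappa Y'+\int_t^{\cdot}\kappa\ell\,\dd r$ is a class-$(D)$ $\Qq$-local martingale, hence a uniformly integrable martingale with terminal value $\zeta$, and then invoke the representation uniqueness. The only difference is that you write out the localization step (via $\mathcal L^1_{\rm loc}(\nu^\circ,\Qq)$ and Jacod--Shiryaev II.1.28), which the paper compresses into ``reversing the preceding calculation.''
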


\begin{proof}
Under $\Qq$, the state estimate associated with \eqref{eq:state-Q} and the boundedness of the weights imply
\[
\begin{aligned}
\zeta:={}&\kappa(T)\ip{GX(T)}{X(T)}\\
&+\int_t^T\kappa(r)\ell(r,X(r),u(r))\dd r\in L^1_{\Qq,\cF_T}(\R).
\end{aligned}
\]
The uniformly integrable martingale $M_s=\E_s^{\Qq}\zeta$ has representation \eqref{eq:L1-martingale-representation}, with integrands $\bar Z\in\mathcal H^2_{\mathrm{loc}}(\Qq)$ and $\bar U\in\mathcal L^1_{\mathrm{loc}}(\nu^\circ,\Qq)$. Define
\begin{align*}
Y_s&:=\kappa_s^{-1}\Big(M_s-\int_t^s\kappa_r\ell(r,X_r,u_r)\dd r\Big),\\
Z_s&:=\kappa_s^{-1}\bar Z_s,\qquad U(s,e):=\kappa_s^{-1}\bar U(s,e).
\end{align*}
Since $\dd\kappa=a\kappa\dd s$, the product rule and the noise substitutions in \lemref{lem:local-measures} give \eqref{eq:recursive-BSDE}. Its terminal condition follows from $M_T=\zeta$. The process in \eqref{eq:UI-class} equals $M$, so it is uniformly integrable. Conditional expectation yields \eqref{eq:recursive-representation}; conditional Jensen gives $\E^{\Qq}|J^{\mathrm r}(t,\xi;u)|\le\underline\kappa^{-1}\E^{\Qq}|\zeta|<\infty$.

For uniqueness, let another triple satisfy \defref{def:recursive-solution}.  Reversing the preceding calculation shows that the process in \eqref{eq:UI-class} is a uniformly integrable $\Qq$-martingale with terminal value $\zeta$.  It must therefore equal $M$.  The predictable representation property then identifies $Z$ and $U$ uniquely up to the usual null sets.
\end{proof}

\begin{remark}\label{rmk:L1-counterexample}
Let $N$ be a unit-rate Poisson process on $[0,1]$, $u(s)=s^{-1/3}$ for $s>0$, and
$X(s)=\int_0^su(r)(\dd N(r)-\dd r)$. Then $u\in L^2(0,1)$ and $X(1)^2\in L^1$. For zero recursive coefficients and zero running cost,
\[
\begin{aligned}
Y(s)&=\E[X(1)^2\mid\cF_s]=X(s)^2+\int_s^1u(r)^2\dd r,\\
U(s)&=2X(s-)u(s)+u(s)^2.
\end{aligned}
\]
Before the first jump, $X(s-)=-\tfrac32s^{2/3}$, hence
$U(s)=s^{-2/3}-3s^{1/3}$. Its $|U|$ compensator integral is finite near zero, but its $|U|^2$ integral is infinite on every initial interval. Thus the local $L^1$ jump class is essential even for this quadratic evaluation.
\end{remark}

\subsection{The transformed LQ problem and equality of value flows}

Define the transformed weights
\begin{equation}\label{eq:transformed-weights}
(\widehat Q,\widehat R,\widehat S)=\kappa(Q,R,S),\qquad\widehat G=\kappa(T)G.
\end{equation}
Set
$\widehat\ell(t,x,u):=\kappa(t)\ell(t,x,u)$.
For $(t,\xi)\in[0,T]\times L^2_{\Qq,\cF_t}(\R^n)$ and $u\in\cU_{\Qq}[t,T]$, let $X^{\circ}$ solve \eqref{eq:state-Q} and set
\begin{equation}\label{eq:J-hat}
\widehat J(t,\xi;u)=\E_t^{\Qq}\Big[\ip{\widehat G X_T^\circ}{X_T^\circ}
+\int_t^T\widehat\ell(s,X_s^\circ,u_s)\dd s\Big].
\end{equation}
By \thmref{thm:recursive-representation},
\begin{equation}\label{eq:J-scaling}
J^{\mathrm r}(t,\xi;u)=\kappa(t)^{-1}\widehat J(t,\xi;u),
\qquad u\in\cU_{\Pp,\Qq}[t,T].
\end{equation}

The common control space need not equal either full $L^2$ space, but it is dense in both.

\begin{lemma}\label{lem:space-transfer}
For every $t\in[0,T]$, bounded predictable controls are contained in $\cU_{\Pp,\Qq}[t,T]$ and are dense in $\cU_{\Qq}[t,T]$ and in $\cU_{\Pp}[t,T]$.  In addition, for every $p>1$,
\begin{equation}\label{eq:moment-transfer}
\cU^{2p}_{\Qq}[t,T]\subset\cU_{\Pp}[t,T],
\qquad
\cU^{2p}_{\Pp}[t,T]\subset\cU_{\Qq}[t,T].
\end{equation}
The analogous inclusions hold for terminal random variables.
\end{lemma}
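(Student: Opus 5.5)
The proof has three parts: containment of bounded controls in the common space, density of bounded controls in each full space, and the moment inclusions via H\"older's inequality and \lemref{lem:density-moments}.

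\emph{Containment.} Let $u$ be bounded and predictable with $|u|\le K$. Then
\[
\int_t^T|u_s|^2\dd s\le K^2T
\]
pathwise, so the required expectation is finite under every probability measure. Hence $u\in\cU_{\Pp,\Qq}[t,T]$.

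\emph{Density.} Fix $\mathbb M\in\{\Pp,\Qq\}$ and $u\in\cU_{\mathbb M}[t,T]$, and set
\[
u^N:=u\1_{\{|u|\le N\}}.
\]
This truncation is predictable, bounded, and satisfies $|u-u^N|^2\le|u|^2$. Since $|u-u^N|\to0$ pointwise, dominated convergence applied to $\E^{\mathbb M}\int_t^T|u-u^N|^2\dd s$ gives $u^N\to u$ in $\cU_{\mathbb M}[t,T]$. Nothing about the other measure is used, so the same argument covers both $\Pp$ and $\Qq$.

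\emph{Moment inclusions.} Fix $p>1$, let $p'=p/(p-1)$ be the conjugate exponent, and write $\Lambda:=\int_t^T|u_s|^2\dd s$. Suppose $u\in\cU^{2p}_{\Qq}[t,T]$, so that $\E^{\Qq}\Lambda^{p}<\infty$. Changing measure and applying H\"older's inequality gives
\[
\E^{\Pp}\Lambda=\E^{\Qq}\big[\mathcal E(T)^{-1}\Lambda\big]\le\big(\E^{\Qq}\mathcal E(T)^{-p'}\big)^{1/p'}\big(\E^{\Qq}\Lambda^{p}\big)^{1/p}.
\]
The first factor is finite by \lemref{lem:density-moments}, since $\mathcal E(T)^{-1}$ has moments of every order under $\Qq$. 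Hence $u\in\cU_{\Pp}[t,T]$.

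For the reverse inclusion, suppose $u\in\cU^{2p}_{\Pp}[t,T]$. Writing $\E^{\Qq}\Lambda=\E^{\Pp}[\mathcal E(T)\Lambda]$, H\"older's inequality bounds this by $(\E^{\Pp}\mathcal E(T)^{p'})^{1/p'}(\E^{\Pp}\Lambda^{p})^{1/p}$. The first factor is finite by \eqref{eq:density-moment-formula} with $q=p'$. Hence $u\in\cU_{\Qq}[t,T]$.

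The terminal-variable inclusions follow identically: replace $\Lambda$ by $|\xi|^2$ for $\xi$ an $\cF_t$-measurable variable with a finite $2p$th moment. If needed, one may use $\mathcal E(t)$ instead of $\mathcal E(T)$ by conditioning, since $\mathcal E$ is a martingale, but the $\cF_T$ density already suffices.

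The only step that needs care is tracking which density moment is required in each direction. Both moments are supplied by \lemref{lem:density-moments}, and every other step is routine.
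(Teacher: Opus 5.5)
Your proposal is correct and follows the same route as the paper's proof. You use the truncations $u\1_{\{|u|\le N\}}$ with dominated convergence for density, and H\"older's inequality against the density moments of \lemref{lem:density-moments} for the inclusions. You also spell out the reverse inclusion and the terminal-variable case, which the paper handles ``in the same way.''
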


\begin{proof}
The bounded predictable truncations $u^N=u\1_{\{|u|\le N\}}$ belong to both spaces and converge in whichever $L^2$ space contains $u$.

For the first inclusion in \eqref{eq:moment-transfer}, put $H=\int_t^T|u(s)|^2\dd s$ and let $q=p/(p-1)$.  By H\"older's inequality and \lemref{lem:density-moments},
\[
\begin{aligned}
\E^{\Pp}H&=\E^{\Qq}[\mathcal E(T)^{-1}H]\\
&\le\bigl(\E^{\Qq}\mathcal E(T)^{-q}\bigr)^{1/q}\bigl(\E^{\Qq}H^p\bigr)^{1/p}<\infty.
\end{aligned}
\]
The other inclusion and the statement for random variables follow in the same way.
\end{proof}

The essential infima below are taken in the extended-real $\cF_t$-measurable random variables modulo almost-sure equality, common to $\Pp$ and $\Qq$. Define
\begin{equation}\label{eq:value-definitions}
\begin{aligned}
V^{\mathrm r}(t,\xi)&:=\essinf_{u\in\cU_{\Pp,\Qq}[t,T]}J^{\mathrm r}(t,\xi;u),\\
\widehat V(t,\xi)&:=\essinf_{u\in\cU_{\Qq}[t,T]}\widehat J(t,\xi;u).
\end{aligned}
\end{equation}

\begin{proposition}\label{prop:value-equality}
Under \namedref{Assumptions}{ass:state}, \ref{ass:weights} and \ref{ass:recursive-coeff}, for every $(t,\xi)\in[0,T]\times\cX_t^2$,
\begin{equation}\label{eq:value-equality}
\essinf_{u\in\cU_{\Pp,\Qq}[t,T]}\widehat J(t,\xi;u)
=\essinf_{u\in\cU_{\Qq}[t,T]}\widehat J(t,\xi;u),
\qquad\Qq\text{-a.s.}
\end{equation}
The equality is understood in the extended real sense when the problem is not bounded below. Consequently,
\begin{equation}\label{eq:value-scaling}
V^{\mathrm r}(t,\xi)=\kappa(t)^{-1}\widehat V(t,\xi).
\end{equation}
\end{proposition}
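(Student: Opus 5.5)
The inclusion $\cU_{\Pp,\Qq}[t,T]\subset\cU_{\Qq}[t,T]$ gives the inequality ``$\ge$'' in \eqref{eq:value-equality} at once. The substance is the reverse inequality. I would obtain it by showing that $\widehat J(t,\xi;\cdot)$ is continuous on $\cU_{\Qq}[t,T]$ in a suitable conditional sense, and then approximating any $u\in\cU_{\Qq}[t,T]$ by its bounded truncations $u^N=u\1_{\{|u|\le N\}}$. By \lemref{lem:space-transfer}, each $u^N$ lies in $\cU_{\Pp,\Qq}[t,T]$ and $u^N\to u$ in $\cU_{\Qq}[t,T]$.

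\emph{Step 1 (continuity).} Let $X^\circ$ and $X^{\circ,N}$ be the solutions of \eqref{eq:state-Q} with the common initial value $\xi$ and controls $u$ and $u^N$. Since $\xi\in\cX_t^2\subset L^2_{\Qq}$, the $\Qq$-analogue of \eqref{eq:state-estimate-P}, applied to the difference equation with zero initial value, gives
\[
\E^{\Qq}\sup_{t\le s\le T}|X^\circ_s-X^{\circ,N}_s|^2\le C\,\E^{\Qq}\int_t^T|u_s-u^N_s|^2\dd s\to0 .
\]
The weights $\widehat G,\widehat Q,\widehat S,\widehat R$ are bounded by \assref{ass:weights} and \eqref{eq:kappa}. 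The bilinear estimate $|\ip{Hx}{x}-\ip{Hy}{y}|\le\|H\|\,|x-y|(|x|+|y|)$ and Cauchy--Schwarz then show that the random variable $\zeta^N$ inside the conditional expectation \eqref{eq:J-hat} converges to $\zeta$ in $L^1(\Qq)$. Conditional expectation is an $L^1$ contraction, so $\widehat J(t,\xi;u^N)\to\widehat J(t,\xi;u)$ in $L^1(\Qq)$. Passing to a subsequence, the convergence holds $\Qq$-a.s.

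\emph{Step 2 (the essential infimum).} Write $\underline V$ for the left-hand side of \eqref{eq:value-equality}. By definition of the essential infimum, $\underline V\le\widehat J(t,\xi;u^{N_k})$ a.s. for every $k$. Letting $k\to\infty$ along the a.s. subsequence gives $\underline V\le\widehat J(t,\xi;u)$ a.s. for every $u\in\cU_{\Qq}[t,T]$. Hence $\underline V\le\widehat V(t,\xi)$. This argument uses no convexity and no lower bound, so it works in the extended-real sense, including on events where the infimum is $-\infty$. The essential infima are defined modulo null sets common to $\Pp$ and $\Qq$, so the identity holds under both measures.

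\emph{Step 3 (scaling).} By \eqref{eq:J-scaling}, $J^{\mathrm r}(t,\xi;u)=\kappa(t)^{-1}\widehat J(t,\xi;u)$ for every $u\in\cU_{\Pp,\Qq}[t,T]$. Since $\kappa(t)^{-1}$ is a positive $\cF_t$-measurable factor (bounded above and below), it commutes with the essential infimum over this family. Thus $V^{\mathrm r}(t,\xi)=\kappa(t)^{-1}\underline V$, and combining with Step 2 yields \eqref{eq:value-scaling}.

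The main obstacle is making sure Step 1 really stays inside $L^2(\Qq)$. One must use only the $\Qq$-dynamics \eqref{eq:state-Q} and the $\Qq$-integrability of $\xi$, never physical-measure estimates, since $u$ need not be $\Pp$-square integrable. One must also check that for $u^N$ the $\Qq$-state coincides with the physical state, so that $\widehat J(t,\xi;u^N)$ is the quantity appearing in the left-hand essential infimum. This identification is supplied by the remark after \eqref{eq:state-Q}: a control and initial state that are square integrable under both measures give the same pathwise state process.
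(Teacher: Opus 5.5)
Your proposal is correct and follows the paper's proof essentially step for step. The shared steps are: bounded truncations in $\cU_{\Pp,\Qq}[t,T]$; $L^1(\Qq)$ convergence of the conditional costs via the $\Qq$-state estimate and the bilinear bound; an a.s.\ subsequence showing that the restricted essential infimum lies below every $\widehat J(t,\xi;u)$ with $u\in\cU_{\Qq}[t,T]$; domain inclusion for the reverse inequality; and positive $\cF_t$-measurable scaling through \eqref{eq:J-scaling}. One minor point: both sides of \eqref{eq:value-equality} are already written in terms of $\widehat J$, so identifying the physical and $\Qq$-states is not needed there, only inside \eqref{eq:J-scaling}, which you invoke in Step~3.
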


\begin{proof}
For fixed $u\in\cU_{\Qq}[t,T]$, its bounded truncations converge in $\cU_{\Qq}$. State stability gives $X^{\circ,N}\to X^\circ$ in $\mathcal S_{\Qq}$. Bounded weights, Cauchy--Schwarz and $|x^\transpose Hx-y^\transpose Hy|\le\|H\|(|x|+|y|)|x-y|$ imply $L^1(\Qq)$ convergence of the total costs in \eqref{eq:J-hat}. Conditional costs converge in $L^1(\Qq)$. For this fixed $u$, select a subsequence such that $\widehat J(t,\xi;u^{N_k})\to\widehat J(t,\xi;u)$ almost surely. If $I$ is the restricted essential infimum, the inequalities $I\le\widehat J(t,\xi;u^{N_k})$ hold together outside a single null set for this sequence. Passing to the limit gives $I\le\widehat J(t,\xi;u)$ almost surely. The null set may depend on $u$; thus $I$ is an essential lower bound for the full family. Domain inclusion gives the reverse inequality, and positive scaling gives \eqref{eq:value-scaling}.
\end{proof}

\section{The transformed value kernel and Riccati identification}\label{sec:transformed}
The transformed problem leads to a stochastic Riccati equation with jumps (SREJ). Starting from the nonrecursive value kernel and dynamic programming principle of \citet{MaMeng2026}, we identify the finite-variation part and the effective control-weight bound.

\begin{assumption}\label{ass:jump-invertible}
There is a deterministic $\varepsilon_E>0$ such that
\begin{equation}\label{eq:jump-invertible}
|\det(\Id_n+E(s,e,\omega))|\ge\varepsilon_E
\quad\dd s\otimes\nu(\dd e)\otimes\dd\Pp\text{-a.e.}
\end{equation}
It is a condition on the uncontrolled state jump map.
\end{assumption}

\begin{assumption}\label{ass:UC}
There is a deterministic constant $\delta>0$ such that, for every $t\in[0,T]$ and $u\in\cU_{\Qq}[t,T]$,
\begin{equation}\label{eq:UC}
\widehat J(t,0;u)\ge\delta\E_t^{\Qq}\int_t^T|u(s)|^2\dd s,\quad\Qq\text{-a.s.}
\end{equation}
\end{assumption}

\begin{remark}\label{rmk:UC-recursive}
By \eqref{eq:J-scaling}, the common-domain form of \assref{ass:UC} is
\[
J^{\mathrm r}(t,0;u)\ge\frac{\delta}{\kappa(t)}
\E_t^{\Qq}\int_t^T|u_s|^2\dd s.
\]
Conversely, the same bound on the common domain extends by truncation and $L^1(\Qq)$ continuity to the full transformed domain. The norm is $L^2(\Qq)$: equivalent probabilities need not give uniformly equivalent global $L^2$ norms.
\end{remark}

For matrix-valued processes $(\widehat P,\widehat\Lambda,\widehat\Xi)$, define the following coefficients. Time arguments are suppressed in longer formulas; $\widehat P_-$ denotes the predictable left-limit kernel.
\begin{equation}\label{eq:Nhat}
\begin{aligned}
\widehat{\mathsf N}:={}&\widehat R+D^\transpose \widehat P_-D\\
&+\int_{\Zmark}F(e)^\transpose[\widehat P_-+\widehat\Xi(e)]F(e)\nu^\circ(\dd e),
\end{aligned}
\end{equation}
\begin{equation}\label{eq:Mhat}
\begin{aligned}
\widehat{\mathsf M}:={}&\widehat P_-B^\circ+\widehat\Lambda D+C^\transpose \widehat P_-D+\widehat S^\transpose\\
&+\int_{\Zmark}\Big[E(e)^\transpose \widehat P_-F(e)\\
&\qquad+(I_n+E(e))^\transpose \widehat\Xi(e)F(e)\Big]\nu^\circ(\dd e),
\end{aligned}
\end{equation}
\begin{equation}\label{eq:Hhat}
\begin{aligned}
\widehat{\mathsf H}:={}&(A^\circ)^\transpose \widehat P_-+\widehat P_-A^\circ+\widehat\Lambda C+C^\transpose \widehat\Lambda\\
&+C^\transpose \widehat P_-C+\widehat Q+\int_{\Zmark}\Big[\widehat\Xi(e)E(e)+E(e)^\transpose \widehat\Xi(e)\\
&\qquad+E(e)^\transpose(\widehat P_-+\widehat\Xi(e))E(e)\Big]\nu^\circ(\dd e).
\end{aligned}
\end{equation}

\begin{definition}\label{def:regular-Riccati}
A symmetric triple $(\widehat P,\widehat\Lambda,\widehat\Xi)$ is a \emph{regular transformed Riccati solution} if $\widehat P$ is adapted and c\`adl\`ag with $\mathop{\rm ess\,sup}_{\omega}\sup_{s\le T}\|\widehat P_s(\omega)\|<\infty$, $\widehat\Lambda,\widehat\Xi$ are predictable with
\begin{equation}\label{eq:Riccati-H2}
\E^{\Qq}\int_0^T\left(|\widehat\Lambda(s)|^2+
\int_{\Zmark}|\widehat\Xi(s,e)|^2\nu^{\circ}(\dd e)\right)\dd s<\infty,
\end{equation}
it solves
\begin{equation}\label{eq:SREJ-hat}
\left\{\begin{aligned}
\dd\widehat P_s={}&-[\widehat{\mathsf H}_s-\widehat{\mathsf M}_s\widehat{\mathsf N}_s^{-1}\widehat{\mathsf M}_s^\transpose]\dd s\\
&+\widehat\Lambda_s\dd W_s^\circ+
\int_{\Zmark}\widehat\Xi(s,e)\widetilde\mu^\circ(\dd s,\dd e),\\
\widehat P_T={}&\widehat G.
\end{aligned}\right.
\end{equation}
and, for a deterministic $c_N>0$,
\begin{equation}\label{eq:Nhat-positive}
\widehat{\mathsf N}(t)\succeq c_N\Id_m
\qquad\dd t\otimes\dd\Qq\text{-a.e.}
\end{equation}
The drift must be pathwise integrable. Feedback admissibility is specified in Section~\ref{sec:feedback}.
\end{definition}

\begin{proposition}\label{thm:nonrecursive}
Let \namedref{Assumptions}{ass:state}, \ref{ass:weights}, \ref{ass:recursive-coeff}, \ref{ass:jump-invertible} and \ref{ass:UC} hold. Then there exists a regular solution $(\widehat P,\widehat\Lambda,\widehat\Xi)$ of \eqref{eq:SREJ-hat}, with $c_N=\delta$ in \eqref{eq:Nhat-positive}. For every deterministic $t<T$ and $\xi\in L^2_{\Qq,\cF_t}(\R^n)$, the transformed conditional problem has a unique open-loop optimizer $\widehat u^{t,\xi,*}$ and
\begin{equation}\label{eq:Vhat-quadratic}
\widehat V(t,\xi)=\ip{\widehat P(t)\xi}{\xi}.
\end{equation}
Its state and control satisfy, for a deterministic $C$ independent of $(t,\xi)$,
\begin{equation}\label{eq:transformed-optimal-estimate}
\E_t^{\Qq}\left[\sup_{t\le s\le T}|\widehat X_s^{t,\xi,*}|^2+
\int_t^T|\widehat u_s^{t,\xi,*}|^2\dd s\right]\le C|\xi|^2.
\end{equation}
The value determines the canonical kernel and its martingale coefficients.
\end{proposition}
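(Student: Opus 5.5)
Under $\Qq$ the transformed problem \eqref{eq:state-Q}--\eqref{eq:J-hat} is a nonrecursive indefinite LQ problem with bounded random coefficients, bounded weights $(\widehat Q,\widehat R,\widehat S,\widehat G)$ and a finite deterministic compensator $\nu^\circ$. \assref{ass:UC} supplies uniform convexity in the $\Qq$-norm, so I would work entirely under $\Qq$ and follow the value-kernel route of \citet{MaMeng2026}.

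\textbf{Existence and uniqueness of the optimizer.} Fix a deterministic $t<T$ and $\xi\in L^2_{\Qq,\cF_t}$. Write $\widehat J(t,\xi;u)$ as a conditional quadratic form $\langle \mathcal N_t u,u\rangle+2\langle \mathcal L_t\xi,u\rangle+\langle \mathcal M_t\xi,\xi\rangle$ on $\cU_{\Qq}[t,T]$. Since the state is linear in $(\xi,u)$, $\mathcal N_t,\mathcal L_t,\mathcal M_t$ are bounded and $\cF_t$-linear. \eqref{eq:UC} gives $\mathcal N_t\succeq\delta$ conditionally. Hence the functional is strictly convex and coercive on the Hilbert space $\cU_{\Qq}[t,T]$ (apply the conditional form on $\cF_t$-sets). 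Then $\widehat u^{t,\xi,*}=-\mathcal N_t^{-1}\mathcal L_t\xi$ is the unique minimizer and the value is $\langle(\mathcal M_t-\mathcal L_t^*\mathcal N_t^{-1}\mathcal L_t)\xi,\xi\rangle$. Conditioning on $\cF_t$ with $\cF_t$-measurable multipliers shows that the map is $\cF_t$-linear in $\xi$. Testing on the basis vectors $e_i$ then defines a bounded $\cF_t$-measurable symmetric $\widehat P(t)$ with \eqref{eq:Vhat-quadratic}. The bound \eqref{eq:transformed-optimal-estimate} follows by comparing $\widehat J(t,\xi;\widehat u^*)\le\widehat J(t,\xi;0)\le C|\xi|^2$ with the coercive lower bound. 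Expanding $\widehat J(t,\xi;u)=\widehat J(t,0;u)+2(\text{cross})+\widehat J(t,\xi;0)$ gives $\delta\E_t\int|u^*|^2\le C|\xi|^2+C|\xi|(\E_t\int|u^*|^2)^{1/2}$. The state part then follows from the conditional state estimate.

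\textbf{Semimartingale structure and identification.} First, the dynamic programming principle (time consistency of the restarted optimizers) shows that
\[
\langle\widehat P_s\widehat X^*_s,\widehat X^*_s\rangle+\int_t^s\widehat\ell\,\dd r
\]
is a $\Qq$-martingale along optimal trajectories, while the same expression is a submartingale for arbitrary controls. Choosing constant-in-$\omega$ initial states $\xi=e_i,e_i+e_j$, and using the restarted optimal flows, I get a càdlàg bounded version of $\widehat P$ and a special semimartingale decomposition. I would do this by inverting the optimal state flow between consecutive jump times, using Assumption~\ref{ass:jump-invertible}: the uncontrolled jump map $I+E$ is invertible, and between jumps the flow is diffusive, hence invertible. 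The martingale representation of \lemref{lem:L1-representation}, now in $L^2$ since $\widehat P$ is bounded, yields $(\widehat\Lambda,\widehat\Xi)$. I would obtain \eqref{eq:Riccati-H2} from the Itô isometry applied to the bounded $\widehat P$, via the standard energy estimate for bounded semimartingales with integrable drift.

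Next I would identify the drift, which I expect to be the main obstacle. Apply Itô's formula to $\langle\widehat P X,X\rangle$ for an arbitrary control, with drift $-\Gamma\,\dd s$ of $\widehat P$ unknown. The integrand is
\[
\langle(\widehat{\mathsf H}-\widehat Q-\Gamma)X,X\rangle+2\langle\widehat{\mathsf M}^\top X,u\rangle+\langle\widehat{\mathsf N}u,u\rangle-\widehat\ell .
\]
Adding $\widehat\ell$, the submartingale property with short-time perturbations $u=v\1_{[s,s+h]}$ gives the following: $\widehat{\mathsf N}\succeq0$; pointwise minimization over $u$ must be nonnegative; and along the optimizer it is zero. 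Combining these identifies $\Gamma=\widehat{\mathsf H}-\widehat{\mathsf M}\widehat{\mathsf N}^{-1}\widehat{\mathsf M}^\top$ and the feedback $u^*=-\widehat{\mathsf N}^{-1}\widehat{\mathsf M}^\top X^*$.

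\textbf{The bound $\widehat{\mathsf N}\succeq\delta$.} For this I would use an auxiliary cost with $\widehat R$ replaced by $\widehat R-\delta I$. \eqref{eq:UC} says this auxiliary problem still has a nonnegative value at $\xi=0$. Its kernel differs from $\widehat{\mathsf N}$ exactly by $\delta I$, and the previous step applied to the auxiliary problem gives $\widehat{\mathsf N}-\delta I\succeq0$. This requires care with the post-jump term $\widehat\Xi$ inside $\widehat{\mathsf N}$, handled by needle controls localized before the next jump. The invertibility of the jump map is used there to ensure that restarted optimal states span $\R^n$ after jumps.
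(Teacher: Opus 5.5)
Your construction of the conditional optimizer, the kernel $\widehat P(t)$ and the estimate \eqref{eq:transformed-optimal-estimate} is fine and matches the paper.

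\textbf{Main gap: the bound $\widehat{\mathsf N}\succeq\delta I_m$.} The auxiliary problem with $\widehat R-\varepsilon I_m$ has its own value kernel $\widehat P^\varepsilon$, with jump coefficient $\widehat\Xi^\varepsilon$. Drift nonnegativity applied to it gives $\widehat{\mathsf N}^\varepsilon\succeq0$ for the matrix built from $(\widehat P^\varepsilon_-,\widehat\Xi^\varepsilon)$, not from $(\widehat P_-,\widehat\Xi)$. So the two kernels do not differ by exactly $\varepsilon I_m$; rather
\[
\widehat{\mathsf N}-\widehat{\mathsf N}^\varepsilon=\varepsilon I_m+D^\transpose(\widehat P_--\widehat P^\varepsilon_-)D
+\int_{\Zmark}F(e)^\transpose\big[\widehat P_-+\widehat\Xi(e)-\widehat P^\varepsilon_--\widehat\Xi^\varepsilon(e)\big]F(e)\,\nu^\circ(\dd e),
\]
and you need the last two terms to be nonnegative. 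The missing idea is value comparison. The auxiliary cost is smaller, so $\widehat P^\varepsilon_s\preceq\widehat P_s$, first at rational times and vectors and then for all $s$ simultaneously by right-continuity. At Poisson atoms the predictable finite-variation parts do not jump, so this order reads $\widehat P_-+\widehat\Xi\succeq\widehat P^\varepsilon_-+\widehat\Xi^\varepsilon$. Compensation of the predictable exceptional set then transfers it to $\dd s\otimes\nu^\circ\otimes\dd\Qq$-a.e. This is what controls the post-jump term; needle controls and invertibility of $I_n+E$ do not produce this ordering. You should also take $\varepsilon<\delta$ and let $\varepsilon\uparrow\delta$. With $\varepsilon=\delta$ the auxiliary problem is only convex, so the optimizers and value-kernel structure you invoke for it are unavailable.

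\textbf{The identification step also needs repair.} You use $\widehat{\mathsf N}^{-1}$ and the feedback form while only $\widehat{\mathsf N}\succeq0$ is known. Nonnegativity of your drift quadratic form (the paper's $\mathcal D_s(x,v)$ in \eqref{eq:pre-Riccati-polynomial}), with equality along an optimizer, only shows that the Riccati residual annihilates $X^*_-$. To make it vanish you need optimal states spanning $\R^n$. An open-loop optimal matrix flow is not invertible merely because it is diffusive between jumps: it solves a closed linear equation only once $u^*=-\widehat\Theta X^*_-$ is known, and that requires $\widehat{\mathsf N}\succeq\delta I_m$ first. Across jumps, invertibility of $I_n+E$ says nothing about the optimal flow, whose jump map $I_n+E-F\widehat\Theta$ can vanish (Example~\ref{ex:negative-R-jumps}). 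The paper therefore restarts at rational $r$ and uses only $(r,\sigma_r]$, where $\sigma_r$ is the first atom after $r$. It uses $I_n+E$ only for the \emph{uncontrolled} flow with constant controls, to get $\mathcal D_s(x,v)\ge0$ for all $(x,v)$.

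\textbf{Two further omissions.} You assume the finite-variation part of $\widehat P$ is $-\Gamma\dd s$, but a time-singular part must be excluded; the paper does this with the same restarted closed-loop flows. Also, \eqref{eq:Riccati-H2} is not a generic energy estimate, because the drift contains $\widehat{\mathsf M}\widehat{\mathsf N}^{-1}\widehat{\mathsf M}^\transpose$, which is quadratic in $\widehat\Lambda$. The paper applies It\^o's formula to $|\widehat P+(K_0+1)I_n|^2$, with $K_0$ a bound for $\|\widehat P\|$. It then uses $\widehat{\mathsf M}\widehat{\mathsf N}^{-1}\widehat{\mathsf M}^\transpose\succeq0$ to give that term a favorable sign.
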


\begin{proof}
We use the value representation, dynamic programming and semimartingale construction in \citet[Proposition~3.1 and Theorems~3.1--3.2]{MaMeng2026}. Its coefficient assumption (AS1) follows from \eqref{eq:transformed-drift} and the finite compensator. For its weight assumption (AS2), \eqref{eq:transformed-weights} preserves bounded predictable running weights, a bounded $\cF_T$-measurable terminal weight, and symmetry of $\widehat Q,\widehat R,\widehat G$. Section~\ref{sec:setting} gives the independent Brownian--Poisson filtration. Equivalence preserves \assref{ass:jump-invertible}; replacing $E$ by zero on its predictable exceptional set gives the pointwise lower bound $\min(\varepsilon_E,1)$. Compensation leaves the state integrals unchanged, and the adjugate formula bounds $(I_n+E)^{-1}$.

Taking expectations in \eqref{eq:UC} gives coercivity on the real Hilbert control space. Testing scalar coercivity on $\1_Au$, $A\in\cF_t$, gives the conditional inequality; extending a control on $(\tau,T]$ by zero before $\tau$ and testing on $\cF_\tau$ events gives the stopped bound. Hilbert-space minimization and pasting on $\cF_t$ events yield the unique conditional optimizer. Splitting the state into its zero-control and zero-initial parts bounds the cost's bilinear cross term by $C|\xi|(\E_t^{\Qq}\int_t^T|u|^2\dd s)^{1/2}$. Comparison with zero gives
\[
\delta\E_t^{\Qq}\int_t^T|\widehat u^*|^2\dd s
\le C|\xi|\Big(\E_t^{\Qq}\int_t^T|\widehat u^*|^2\dd s\Big)^{1/2}.
\]
Absorption and the state estimate prove \eqref{eq:transformed-optimal-estimate}.

The cited construction gives a bounded symmetric c\`adl\`ag value kernel. Decompose its predictable finite-variation part as
\begin{equation}\label{eq:value-kernel-before-Riccati}
\begin{aligned}
\dd\widehat P_s={}&\dd\widehat{\mathcal A}_s+\widehat\Lambda_s\dd W_s^\circ
+\int\widehat\Xi(s,e)\widetilde\mu^\circ(\dd s,\dd e),\\
\dd\widehat{\mathcal A}_s={}&\widehat\Psi_s\dd s+\dd\widehat{\mathcal A}^{\perp}_s.
\end{aligned}
\end{equation}
Here $\widehat{\mathcal A}_0=0$, $\widehat{\mathcal A}$ is predictable of finite variation, $\dd|\widehat{\mathcal A}^{\perp}|$ is singular with respect to time, and the martingale integrands are initially local. In particular, $\int_0^T|\widehat\Lambda_s|^2\dd s<\infty$ pathwise. The jumps of $\widehat{\mathcal A}$ form a predictable time-null set, avoided by the Poisson atoms. Hence $\widehat\Xi$ equals $\Delta\widehat P$ at each atom. Boundedness of the kernel, simplicity and compensation give $|\widehat\Xi|\le2\sup|\widehat P|$ in product measure. Thus $\widehat{\mathsf N}$ is bounded and $\int_0^T|\widehat{\mathsf M}|^2\dd s<\infty$ pathwise.

\emph{Step 1: drift nonnegativity.}
For any admissible pair, the dynamic programming principle makes
$X_s^\transpose\widehat P_sX_s+\int_t^s\widehat\ell_r\dd r$
a submartingale, and a martingale along an optimal pair. Joint It\^o calculus, including \eqref{eq:joint-jump-remainder}, gives its predictable finite-variation part as
$X_-^\transpose\dd\widehat{\mathcal A}^{\perp}X_-+\mathcal D_s(X_-,u_s)\dd s$, where
\begin{equation}\label{eq:pre-Riccati-polynomial}
\begin{aligned}
\mathcal D_s(x,v)={}&x^\transpose(\widehat\Psi_s+\widehat{\mathsf H}_s)x
+2x^\transpose\widehat{\mathsf M}_sv+v^\transpose\widehat{\mathsf N}_sv.
\end{aligned}
\end{equation}
These calculations are localized by the state, variation and energy clocks. Uniqueness of the local Doob--Meyer decomposition makes both mutually singular parts nonnegative. For constant $v$, the state is $\Phi_sx+G_s^0v$, where $G^0v$ is its zero-initial part. The uncontrolled flow $\Phi$ is invertible between jumps by the continuous inverse equation and across jumps by $I_n+E$. Applying the drift inequality first to rational $x,v$, then using this affine representation and continuity, gives
\begin{equation}\label{eq:drift-polynomial-positive}
\mathcal D_s(x,v)\ge0\quad\text{for all }(x,v),\qquad
\dd s\otimes\dd\Qq\text{-a.e.}
\end{equation}
The singular parts for $v=0$ similarly give
$\Phi_-^\transpose\dd\widehat{\mathcal A}^{\perp}\Phi_-\succeq0$,
hence $\dd\widehat{\mathcal A}^{\perp}\succeq0$ as a matrix-valued measure. In particular, $\widehat{\mathsf N}\succeq0$.

\emph{Step 2: auxiliary-cost comparison.}
Fix $0<\varepsilon<\delta$ and replace $\widehat R$ by $\widehat R-\varepsilon I_m$. The auxiliary problem is uniformly convex with constant $\delta-\varepsilon$ and has a bounded value kernel $\widehat P^\varepsilon$. Cost comparison at rational vectors and times, followed by right-continuity, gives $\widehat P\succeq\widehat P^\varepsilon$ simultaneously at all times. Their predictable finite-variation parts do not jump at Poisson atoms, so at these atoms
\[
\widehat P_-+\widehat\Xi(e)
-\widehat P^\varepsilon_--\widehat\Xi^\varepsilon(e)\succeq0.
\]
Compensation transfers this order to product measure. Step~1 for the auxiliary problem gives $\widehat{\mathsf N}^\varepsilon\succeq0$, whence
\[
\begin{aligned}
\widehat{\mathsf N}={}&\widehat{\mathsf N}^\varepsilon+\varepsilon I_m
+D^\transpose(\widehat P_--\widehat P^\varepsilon_-)D\\
&+\int F^\transpose\big[\widehat P_-+\widehat\Xi
-\widehat P^\varepsilon_--\widehat\Xi^\varepsilon\big]F\dd\nu^\circ
\succeq\varepsilon I_m.
\end{aligned}
\]
A countable sequence $\varepsilon\uparrow\delta$ gives $\widehat{\mathsf N}\succeq\delta I_m$.

\emph{Step 3: identification between jumps.}
Set $\widehat\Theta=\widehat{\mathsf N}^{-1}\widehat{\mathsf M}^\transpose$ and
$\mathcal K=\widehat\Psi+\widehat{\mathsf H}-\widehat{\mathsf M}\widehat{\mathsf N}^{-1}\widehat{\mathsf M}^\transpose$.
Then $\mathcal K\succeq0$ and $\int_0^T|\widehat\Theta|^2\dd s<\infty$ pathwise. Along an optimal pair the nonnegative finite-variation measure vanishes. Completing $\mathcal D$ and using $\widehat{\mathsf N}\succeq\delta I_m$ therefore gives
\begin{equation}\label{eq:feedback-before-Riccati}
\begin{gathered}
u^*=-\widehat\Theta X_-^*,\quad \mathcal KX_-^*=0
\quad\dd s\otimes\dd\Qq\text{-a.e.},\\
X_-^{*\transpose}\dd\widehat{\mathcal A}^{\perp}X_-^*=0.
\end{gathered}
\end{equation}
For each rational $r\in[0,T)$, including zero, collect the optimal states from $(r,e_i)$ as columns of $Z^r$, with $Z^r_r=I_n$. Let $\sigma_r$ be the first Poisson atom after $r$, capped at $T$. Up to this jump, the closed-loop matrix equation has coefficients
\[
\begin{aligned}
b_s&=A_s^\circ-B_s^\circ\widehat\Theta_s
-\int(E(s,e)-F(s,e)\widehat\Theta_s)\dd\nu^\circ,\\
c_s&=C_s-D_s\widehat\Theta_s.
\end{aligned}
\]
They have pathwise integrable drift and square-integrable diffusion. The continuous inverse equation
$\dd Y=Y(c^2-b)\dd s-Yc\dd W^\circ$, $Y_r=I_n$,
shows that $Z^r_{s-}$ is invertible for $r<s\le\sigma_r$. Thus \eqref{eq:feedback-before-Riccati} implies $\mathcal K=0$ almost everywhere there. Summing the singular identities over the columns gives
$\tr[(Z^r_-)^\transpose\dd\widehat{\mathcal A}^{\perp}Z^r_-]=0$;
positivity and invertibility imply $\dd\widehat{\mathcal A}^{\perp}=0$ on $(r,\sigma_r]$. These countably many intervals cover $(0,T]$ on every finite-activity path, including atom times through left limits and the terminal endpoint. Hence $\widehat{\mathcal A}^{\perp}=0$ and
$\widehat\Psi=-\widehat{\mathsf H}+\widehat{\mathsf M}\widehat{\mathsf N}^{-1}\widehat{\mathsf M}^\transpose$.

To obtain global integrability, put $\widehat{\mathsf L}=\widehat{\mathsf M}\widehat{\mathsf N}^{-1}\widehat{\mathsf M}^\transpose\succeq0$ and $\mathscr Y=\widehat P+(K_0+1)I_n\succeq I_n$, where $K_0$ bounds $\|\widehat P\|$. Apply It\^o's formula to $|\mathscr Y|^2$ at $\rho_j$, the first time $\int_0^s(|\widehat\Lambda|^2+\tr\widehat{\mathsf L})\dd r$ reaches $j$, capped at $T$. Bounded $\mathscr Y,\widehat\Xi$, the bound $|\widehat{\mathsf H}|\le C(1+|\widehat\Lambda|)$ and Young's inequality give
\[
\E^{\Qq}\int_0^{\rho_j}\Big(\tfrac12|\widehat\Lambda|^2
+\int|\widehat\Xi|^2\dd\nu^\circ+2\tr\widehat{\mathsf L}\Big)\dd s\le C.
\]
The stops eventually equal $T$. Fatou gives \eqref{eq:Riccati-H2} and an $L^1$ drift, so the triple is regular. Polarization and right-continuity determine the canonical kernel; uniqueness of its special-semimartingale decomposition determines the integrands.
\end{proof}

For nonrecursive Brownian LQ problems, \citet[Theorem~7.3]{SunXiongYong2021} give a uniform-convexity criterion based on a Lyapunov BSDE. The following elementary bound uses conditional state estimates for the transformed nonrecursive jump problem.

\begin{proposition}\label{prop:coefficient-UC}
Under \namedref{Assumptions}{ass:state}--\ref{ass:recursive-coeff}, let $A_0,\ldots,F_0$ be uniform operator-norm bounds for $A^\circ,B^\circ,C,D,E,F$, respectively. Set $\lambda_\circ=\nu^\circ(\Zmark)$ and
\begin{equation}\label{eq:UC-data-constants}
\begin{aligned}
\alpha_0&=2A_0+1+2C_0^2+2\lambda_\circ E_0^2,\\
\gamma_0&=B_0^2+2D_0^2+2\lambda_\circ F_0^2,\\
C_T&=\gamma_0e^{\alpha_0T},\qquad C_I=TC_T.
\end{aligned}
\end{equation}
Write $H^-$ for the spectral negative part of a symmetric matrix. With $g_-=\|\widehat G^-\|_\infty$, $q_-=\|\widehat Q^-\|_\infty$ and $s_0=\|\widehat S\|_\infty$, if $\widehat R\succeq r_0I_m$ for a deterministic $r_0$ and
\begin{equation}\label{eq:UC-data-test}
\delta_0:=r_0-g_-C_T-q_-C_I-2s_0\sqrt{C_I}>0,
\end{equation}
then \assref{ass:UC} holds with $\delta=\delta_0$.
\end{proposition}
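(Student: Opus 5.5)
The plan is to bound $\widehat J(t,0;u)$ from below directly, term by term, using a conditional energy estimate for the zero-initial state $X$ of \eqref{eq:state-Q}. By \eqref{eq:J-hat}, $\widehat J(t,0;u)=\E_t^{\Qq}[\langle\widehat GX_T,X_T\rangle+\int_t^T(\langle\widehat QX,X\rangle+2\langle\widehat SX,u\rangle+\langle\widehat Ru,u\rangle)\dd s]$. The spectral lower bounds $\langle\widehat GX_T,X_T\rangle\ge -g_-|X_T|^2$, $\langle\widehat QX,X\rangle\ge -q_-|X|^2$ and $\langle\widehat Ru,u\rangle\ge r_0|u|^2$ handle three of the terms. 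For the cross term, conditional Cauchy--Schwarz gives $\E_t^{\Qq}\int|\langle\widehat SX,u\rangle|\le s_0(\E_t^{\Qq}\int|X|^2)^{1/2}(\E_t^{\Qq}\int|u|^2)^{1/2}$.

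The key estimate, which I will prove first, is
\[
\E_t^{\Qq}|X_s|^2\le C_T\,\E_t^{\Qq}\int_t^s|u_r|^2\dd r\qquad (t\le s\le T),
\]
with $C_T=\gamma_0e^{\alpha_0T}$. I will apply It\^o's formula to $|X_s|^2$ under $\Qq$ with the transformed noises. The drift part contributes $2\langle X,A^\circ X+B^\circ u\rangle\le(2A_0+1)|X|^2+B_0^2|u|^2$ by Young's inequality. The Brownian part contributes $|CX+Du|^2\le 2C_0^2|X|^2+2D_0^2|u|^2$. The compensated jump part has predictable compensator $\int|EX+Fu|^2\nu^\circ(\dd e)\le 2\lambda_\circ(E_0^2|X|^2+F_0^2|u|^2)$. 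These three contributions reproduce exactly $\alpha_0|X|^2+\gamma_0|u|^2$. The martingale terms have zero conditional expectation given $\cF_t$ once I localize: stop at the exit time of $|X|$ from a ball, use $u\in\cU_{\Qq}$ and the estimate \eqref{eq:state-estimate-P} in its $\Qq$ version to pass to the limit, and use Fatou/monotone convergence. This yields $\phi(s)\le\alpha_0\int_t^s\phi+\gamma_0 \E_t^{\Qq}\int_t^T|u|^2$ for $\phi(s)=\E_t^{\Qq}|X_s|^2$. Gronwall's inequality then gives the bound. A measurable version of the conditional expectations can be chosen, for instance by working with $\E^{\Qq}[\1_A|X_s|^2]$ for $A\in\cF_t$, which avoids pathwise Gronwall issues.

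Integrating over $s$ gives $\E_t^{\Qq}\int_t^T|X|^2\le C_I\,\E_t^{\Qq}\int_t^T|u|^2$. Collecting the four bounds,
\[
\widehat J(t,0;u)\ge\big(r_0-g_-C_T-q_-C_I-2s_0\sqrt{C_I}\big)\E_t^{\Qq}\int_t^T|u|^2\dd s=\delta_0\,\E_t^{\Qq}\int_t^T|u|^2\dd s,
\]
which is \eqref{eq:UC}.

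I expect the main obstacle to be justifying the conditional It\^o/Gronwall step rigorously: removing the local-martingale terms in conditional expectation and handling the conditional versions. I plan to do this via the $\1_A$-testing trick with localization, as sketched above. The constants themselves match the definitions in \eqref{eq:UC-data-constants} directly.
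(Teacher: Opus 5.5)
Your proposal is correct and follows the paper's proof essentially step for step. In both, the It\^o drift of $|X|^2$ under $\Qq$ is bounded by $\alpha_0|X_-|^2+\gamma_0|u|^2$; localization, the $L^2$ state estimate and a conditional Gronwall argument (which your $\1_A$-testing over $A\in\cF_t$ makes rigorous) then give $\E_t^{\Qq}|X_T|^2\le C_TU_t$ and $\E_t^{\Qq}\int_t^T|X_s|^2\dd s\le C_IU_t$, and termwise lower bounds with conditional Cauchy--Schwarz on the cross term yield $\delta_0$.
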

\begin{proof}
For the zero-initial solution of \eqref{eq:state-Q}, the drift of $|X|^2$ under $\Qq$ is bounded by $\alpha_0|X_-|^2+\gamma_0|u|^2$. Localization, the $L^2$ state estimate and conditional Gronwall give, with $U_t=\E_t^{\Qq}\int_t^T|u_s|^2\dd s$,
\[
\E_t^{\Qq}|X_T|^2\le C_TU_t,\qquad
\E_t^{\Qq}\int_t^T|X_s|^2\dd s\le C_IU_t.
\]
The terminal and state terms are bounded below by $-g_-C_TU_t$ and $-q_-C_IU_t$. Conditional Cauchy--Schwarz bounds the cross term in absolute value by $2s_0\sqrt{C_I}U_t$. Adding the control term proves the claim.
\end{proof}
This test allows negative state and terminal weights; \exref{ex:negative-R-jumps} treats a negative control weight.

\begin{lemma}\label{lem:bounded-Xi}
For any regular transformed solution, if $|\widehat P|\le K$ uniformly, then $|\widehat\Xi(t,e)|\le2K$ for $\dd t\otimes\nu^{\circ}(\dd e)\otimes\dd\Qq$-a.e. $(t,e,\omega)$. Consequently $\widehat{\mathsf N}$ is bounded above, and
\begin{equation}\label{eq:Theta-hat}
\begin{aligned}
&\widehat\Theta:=\widehat{\mathsf N}^{-1}\widehat{\mathsf M}^{\transpose}
\quad\hbox{satisfies}\quad
|\widehat\Theta|\le C(1+|\widehat\Lambda|),\\
&\E^{\Qq}\int_0^T|\widehat\Theta(s)|^2\dd s<\infty.
\end{aligned}
\end{equation}
\end{lemma}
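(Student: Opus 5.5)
The plan follows the argument already sketched inside the proof of \propref{thm:nonrecursive}. It has three parts: identify $\widehat\Xi$ with the jump of $\widehat P$ at Poisson atoms, bound $\widehat{\mathsf N}$ and $\widehat{\mathsf M}$ directly from \eqref{eq:Nhat}--\eqref{eq:Mhat}, and invert using the lower bound \eqref{eq:Nhat-positive}.

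\emph{Step 1 (jump identification).} From \eqref{eq:SREJ-hat}, the only source of jumps of $\widehat P$ is the compensated Poisson integral. The drift is absolutely continuous in time, and the Brownian integral is continuous. Since $\nu^\circ(\dd e)\dd t$ has no time atoms, \eqref{eq:time-simple} (which also holds under $\Qq$) gives
\[
\Delta\widehat P_{\sigma_j}=\widehat\Xi(\sigma_j,e_j)
\]
at each atom $(\sigma_j,e_j)$. Therefore $|\widehat\Xi(\sigma_j,e_j)|\le|\widehat P_{\sigma_j}|+|\widehat P_{\sigma_j-}|\le2K$. Set $A=\{(s,e,\omega):|\widehat\Xi(s,e)|>2K\}$, which is a predictable set. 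Then $\int\!\!\int\1_A\,\mu(\dd s,\dd e)=0$ a.s. By compensation, that is, taking $\Qq$-expectation of the nonnegative predictable integrand against $\mu$ and against $\nu^\circ\dd s$,
\[
\E^{\Qq}\int_0^T\!\!\int\1_A\,\nu^\circ(\dd e)\dd s=0 .
\]
This proves the first claim. The only delicate point is that the square-integrability \eqref{eq:Riccati-H2} makes the jump integral a genuine compensated integral, so the atom-sum identity (Proposition~II.1.28 of \citet{JacodShiryaev2003}) applies.

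\emph{Step 2 (bounds on $\widehat{\mathsf N}$ and $\widehat{\mathsf M}$).} All coefficients $D,F,E,B^\circ,C,\widehat R,\widehat S$ are bounded, with $\widehat S$ bounded by $\overline\kappa\|S\|_\infty$. Also $|\widehat P_-|\le K$, $|\widehat\Xi|\le2K$ a.e. from Step 1, and $\nu^\circ(\Zmark)<\infty$. Substituting into \eqref{eq:Nhat} gives $\|\widehat{\mathsf N}\|\le C$. In \eqref{eq:Mhat}, every term is bounded except $\widehat\Lambda D$, so $|\widehat{\mathsf M}|\le C(1+|\widehat\Lambda|)$.

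\emph{Step 3 (the gain).} By \eqref{eq:Nhat-positive}, $\|\widehat{\mathsf N}^{-1}\|\le c_N^{-1}$ $\dd t\otimes\dd\Qq$-a.e. Hence
\[
|\widehat\Theta|\le c_N^{-1}C(1+|\widehat\Lambda|),
\]
and squaring with $(a+b)^2\le2a^2+2b^2$ together with \eqref{eq:Riccati-H2} yields
\[
\E^{\Qq}\int_0^T|\widehat\Theta|^2\dd s\le C\Big(T+\E^{\Qq}\int_0^T|\widehat\Lambda|^2\dd s\Big)<\infty .
\]

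The main obstacle is Step 1: transferring the pathwise jump bound to a product-measure statement. This requires the predictability of the exceptional set and the fact that Poisson atoms avoid predictable product-null sets, as noted after \eqref{eq:time-simple}. Everything else is direct estimation.
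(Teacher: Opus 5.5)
Your proposal is correct and follows the paper's proof essentially step by step. The paper likewise identifies $\widehat\Xi$ with $\Delta\widehat P$ at the atoms, using continuity of the Brownian and drift parts and \eqref{eq:time-simple}; applies compensation to the predictable set $\{|\widehat\Xi|>2K\}$; and then bounds $\widehat{\mathsf N}$, every term of $\widehat{\mathsf M}$ except $\widehat\Lambda D$, and $\widehat{\mathsf N}^{-1}$ via \eqref{eq:Nhat-positive}. (One small point: the step you actually use is $\E^{\Qq}\int\!\!\int\1_A\,\mu(\dd s,\dd e)=\E^{\Qq}\int\!\!\int\1_A\,\nu^\circ(\dd e)\dd s$ with the left side zero, not the converse fact that atoms avoid predictable null sets, but this does not affect the argument.)
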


\begin{proof}
The Brownian and finite-variation parts of \eqref{eq:SREJ-hat} are continuous, so
$\Delta\widehat P_s=\int\widehat\Xi(s,e)\mu(\{s\},\dd e)$.
By \eqref{eq:time-simple}, each atom value equals this jump and has norm at most $2K$. For the predictable set $A=\{|\widehat\Xi|>2K\}$, compensation gives
\[
\E^{\Qq}\int_0^T\int_{\Zmark}\1_A\nu^\circ(\dd e)\dd s
=\E^{\Qq}\int_0^T\int_{\Zmark}\1_A\mu(\dd s,\dd e)=0.
\]
Thus the bound holds in product measure. Finite intensity and bounded data bound $\widehat{\mathsf N}$ above; all terms in $\widehat{\mathsf M}$ except $\widehat\Lambda D$ are bounded. Its inverse control weight is bounded by \eqref{eq:Nhat-positive}, which proves the gain estimate and square integrability. The argument also applies with operator norms.
\end{proof}

\begin{lemma}\label{lem:riccati-moments}
For any regular transformed solution, set $\widehat{\mathsf L}:=\widehat{\mathsf M}\widehat{\mathsf N}^{-1}\widehat{\mathsf M}^{\transpose}\succeq0$.
There is a deterministic $C$ such that for every stopping time $\tau\le T$,
\begin{align}\label{eq:canonical-BMO}
\E^{\Qq}_\tau\!\int_\tau^T\Big(&|\widehat\Lambda(s)|^2+
\int_{\Zmark}|\widehat\Xi(s,e)|^2\nu^{\circ}(\dd e)
+\tr\widehat{\mathsf L}(s)\Big)\dd s\le C.
\end{align}
The Riccati martingale has bounded mean oscillation (BMO) under $\Qq$. For every finite $p\ge1$,
\begin{align}\label{eq:base-moments}
\E^{\Qq}\Bigg[&\left(\int_0^T|\widehat\Lambda(s)|^2\dd s\right)^p+
\left(\int_0^T\!\!\int_{\Zmark}|\widehat\Xi(s,e)|^2\nu^{\circ}(\dd e)\dd s\right)^p\notag\\
&+\left(\int_0^T|\widehat{\mathsf H}(s)-\widehat{\mathsf L}(s)|\dd s\right)^p\Bigg]<\infty.
\end{align}
The constants depend on the coefficient and kernel bounds, $c_N$, $T$ and $\nu^\circ(\Zmark)$.
\end{lemma}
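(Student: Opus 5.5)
We will prove \eqref{eq:canonical-BMO} by a conditional version of the It\^o computation used at the end of the proof of \propref{thm:nonrecursive}, and then deduce \eqref{eq:base-moments} from the energy inequality for BMO martingales. Let $K$ bound $\|\widehat P\|$ and set $\mathscr Y=\widehat P+(K+1)I_n$, so that $I_n\preceq\mathscr Y\preceq(2K+1)I_n$. By \lemref{lem:bounded-Xi}, $|\widehat\Xi|\le 2K$ in product measure; hence $\widehat{\mathsf N}$ is bounded above, and all terms of $\widehat{\mathsf H}$ except $\widehat\Lambda C+C^\transpose\widehat\Lambda$ are bounded. This gives $|\widehat{\mathsf H}|\le C(1+|\widehat\Lambda|)$.

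\emph{Conditional energy bound.} Apply It\^o's formula to $|\mathscr Y|^2=\tr(\mathscr Y^2)$ on $[\tau,\rho_j]$. Here $\rho_j$ is the first time after $\tau$ at which $\int_\tau^s(|\widehat\Lambda|^2+\tr\widehat{\mathsf L})\dd r$ reaches $j$, capped at $T$. Then take $\E^{\Qq}_\tau$; the local martingale terms vanish by the stopping and the boundedness of $\mathscr Y,\widehat\Xi$. The drift of $\tr(\mathscr Y^2)$ equals
\[
-2\tr[\mathscr Y_-(\widehat{\mathsf H}-\widehat{\mathsf L})]+|\widehat\Lambda|^2+\int|\widehat\Xi|^2\dd\nu^\circ .
\]
Since $\mathscr Y\succeq I_n$ and $\widehat{\mathsf L}\succeq0$, we have $\tr(\mathscr Y_-\widehat{\mathsf L})\ge\tr\widehat{\mathsf L}$. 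Young's inequality gives $2|\tr(\mathscr Y_-\widehat{\mathsf H})|\le C+\tfrac12|\widehat\Lambda|^2$. Rearranging, we obtain
\[
\E^{\Qq}_\tau\int_\tau^{\rho_j}\Big(\tfrac12|\widehat\Lambda|^2+\int|\widehat\Xi|^2\dd\nu^\circ+2\tr\widehat{\mathsf L}\Big)\dd s\le \E^{\Qq}_\tau|\mathscr Y_{\rho_j}|^2-|\mathscr Y_\tau|^2+C(T-\tau)\le C,
\]
with $C$ deterministic. For each fixed $\tau$ the stops $\rho_j$ eventually equal $T$ almost surely, because \eqref{eq:Riccati-H2} and the pathwise integrable drift in \defref{def:regular-Riccati} make the clock finite; in particular $\tr\widehat{\mathsf L}$ is pathwise integrable, since $\widehat{\mathsf L}=\widehat{\mathsf H}-$(integrable drift) and $\int|\widehat{\mathsf H}|<\infty$. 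Conditional monotone convergence then yields \eqref{eq:canonical-BMO}. The jump part of the Riccati martingale is bounded by $2K$, and its conditional quadratic energy is bounded by \eqref{eq:canonical-BMO}. Hence $\int\widehat\Lambda\dd W^\circ+\int\widehat\Xi\dd\widetilde\mu^\circ$ is BMO under $\Qq$.

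\emph{Higher moments.} Consider a nonnegative process of the form $A_t=\int_0^t a_s\dd s$ with $a\ge0$ and $\E^{\Qq}_\tau[A_T-A_\tau]\le C$ for all stopping times $\tau$. The classical energy (Garsia--Neveu/John--Nirenberg) inequality gives $\E^{\Qq}A_T^p\le p!\,C^p$. The continuity of $A$ means no jump correction is needed. Apply this with $a=|\widehat\Lambda|^2$, with $a=\int|\widehat\Xi|^2\dd\nu^\circ$, and with $a=\tr\widehat{\mathsf L}$. Finally, $|\widehat{\mathsf H}-\widehat{\mathsf L}|\le C(1+|\widehat\Lambda|)+|\widehat{\mathsf L}|$, and $|\widehat{\mathsf L}|\le\tr\widehat{\mathsf L}$ by positivity. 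Together with $\int_0^T|\widehat\Lambda|\dd s\le \sqrt T(\int|\widehat\Lambda|^2)^{1/2}$, this gives the third term of \eqref{eq:base-moments}.

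The main obstacle is making the localization legitimate conditionally at an arbitrary stopping time. We must ensure the stopped stochastic integrals are true martingales under $\E^{\Qq}_\tau$, and that the Young absorption of the cross term $\tr(\mathscr Y_-\widehat\Lambda C)$ produces only a deterministic constant. Both follow from the boundedness of $\mathscr Y$ and of $\widehat\Xi$, together with the clock stopping. Constants then depend only on $K$, $c_N$, the coefficient bounds, $T$ and $\lambda_\circ$.
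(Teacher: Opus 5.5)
Your proposal is correct and follows essentially the same route as the paper. Both arguments apply It\^o's formula to $|\mathscr Y|^2$ with $\mathscr Y=\widehat P+(K+1)I_n$, localize at $\rho_j$, and use $\tr(\mathscr Y_-\widehat{\mathsf L})\ge\tr\widehat{\mathsf L}$ together with Young absorption of $|\widehat{\mathsf H}|\le C(1+|\widehat\Lambda|)$ to obtain the conditional energy bound, then deduce the moments from an energy inequality. The differences are minor. You justify pathwise integrability of $\tr\widehat{\mathsf L}$ through the pathwise integrable drift, whereas the paper uses $\widehat{\mathsf M}\in L^2$ and $\widehat{\mathsf N}\succeq c_N I_m$. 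You also cite the classical energy inequality $\E^{\Qq}A_T^p\le p!\,C^p$, which the paper proves directly by truncating $A\wedge m$ and inducting via conditional Tonelli.
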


\begin{proof}
Since $x^\transpose\widehat{\mathsf L}x=|\widehat{\mathsf N}^{-1/2}\widehat{\mathsf M}^{\transpose}x|^2$, we have $\widehat{\mathsf L}\succeq0$. Let $K_0$ bound $\|\widehat P\|$ and put $\mathscr Y=\widehat P+(K_0+1)I_n\succeq I_n$. By \lemref{lem:bounded-Xi}, $|\widehat{\mathsf H}|\le C(1+|\widehat\Lambda|)$. Cyclicity of the trace gives
\[
\tr[(\mathscr Y_--I_n)\widehat{\mathsf L}]
=\tr[\widehat{\mathsf L}^{1/2}(\mathscr Y_--I_n)\widehat{\mathsf L}^{1/2}]\ge0.
\]
For a stopping time $\tau$, let $\rho_j$ be the first time $s\ge\tau$ at which $\int_\tau^s(|\widehat\Lambda|^2+\int|\widehat\Xi|^2\dd\nu^\circ+\tr\widehat{\mathsf L})\dd r$ reaches $j$, capped at $T$. The integrals are pathwise finite since $\widehat{\mathsf M}\in L^2$ and $\widehat{\mathsf N}^{-1}$ is bounded. Thus $\rho_j\uparrow T$, eventually equal to $T$ almost surely. In It\^o's formula for $|\mathscr Y|^2$, the stopped Brownian coefficient is bounded by $C|\widehat\Lambda|$ and the jump coefficient is bounded. Both stopped martingales have zero conditional mean. Therefore
\begin{equation}\label{eq:moment-energy-identity}
\begin{aligned}
&\E_\tau^{\Qq}\int_\tau^{\rho_j}\Big[|\widehat\Lambda|^2+
\int|\widehat\Xi(e)|^2\nu^\circ(\dd e)
+2\tr(\mathscr Y_-\widehat{\mathsf L})\Big]\dd s\\
&=\E_\tau^{\Qq}[|\mathscr Y_{\rho_j}|^2-|\mathscr Y_\tau|^2]
+2\E_\tau^{\Qq}\int_\tau^{\rho_j}\tr(\mathscr Y_-\widehat{\mathsf H})\dd s\\
&\le C+\tfrac12\E_\tau^{\Qq}\int_\tau^{\rho_j}|\widehat\Lambda|^2\dd s.
\end{aligned}
\end{equation}
Absorb the last integral and use conditional Fatou as $j\to\infty$ to obtain \eqref{eq:canonical-BMO}. Conditional isometry and the bounded jumps give the BMO assertion, including the convention that includes the martingale jump at the stopping time.

For the moment step put $A_s=\int_0^s g_r\dd r$, with $g$ the nonnegative integrand in \eqref{eq:canonical-BMO}, and $A_s^{(m)}=A_s\wedge m$. Let $g^{(m)}$ be its time density. Then $\E_\tau^{\Qq}(A_T^{(m)}-A_\tau^{(m)})\le C$, and conditional Tonelli and induction give
\[
\begin{aligned}
&\E_\tau^{\Qq}(A_T^{(m)}-A_\tau^{(m)})^j\\
&=j\E_\tau^{\Qq}\int_\tau^T g_s^{(m)}
\E_s^{\Qq}[(A_T^{(m)}-A_s^{(m)})^{j-1}]\dd s\\
&\le j!C^{j-1}\E_\tau^{\Qq}\int_\tau^T g_s^{(m)}\dd s
\le j!C^j.
\end{aligned}
\]
The tower step follows by testing on $\cF_\tau$ events: their indicators times $\1_{\{\tau\le s\}}g_s^{(m)}$ are $\cF_s$-measurable. Monotone convergence in $m$ and comparison with integer powers give every finite moment. Finally,
\[
\begin{aligned}
\int_0^T|\widehat{\mathsf H}-\widehat{\mathsf L}|\dd s
\le C&+C\sqrt T\Big(\int_0^T|\widehat\Lambda|^2\dd s\Big)^{1/2}\\
&+\int_0^T\tr\widehat{\mathsf L}\dd s,
\end{aligned}
\]
which proves \eqref{eq:base-moments}.
\end{proof}

\section{The recursive stochastic Riccati equation with jumps}\label{sec:rsrej}
We pull the SREJ back to a recursive stochastic Riccati equation with jumps (RSREJ) under $\Pp$.

For the canonical transformed triple in \propref{thm:nonrecursive}, define
\begin{equation}\label{eq:P-pullback}
(P,\Lambda,\Xi)=\kappa^{-1}(\widehat P,\widehat\Lambda,\widehat\Xi).
\end{equation}
For $(P,\Lambda,\Xi)$, use the same time-argument convention and set
\begin{equation}\label{eq:Nrec}
\begin{aligned}
\mathsf N:={}&R+D^\transpose P_-D\\
&+\int_{\Zmark}F(e)^\transpose[P_-+\Xi(e)]F(e)\nu^\circ(\dd e),
\end{aligned}
\end{equation}
\begin{equation}\label{eq:Mrec}
\begin{aligned}
\mathsf M:={}&P_-B^\circ+\Lambda D+C^\transpose P_-D+S^\transpose\\
&+\int_{\Zmark}\Big[E(e)^\transpose P_-F(e)\\
&\qquad+(I_n+E(e))^\transpose \Xi(e)F(e)\Big]\nu^\circ(\dd e),
\end{aligned}
\end{equation}
\begin{equation}\label{eq:Hrec}
\begin{aligned}
\mathsf H:={}&(A^\circ)^\transpose P_-+P_-A^\circ+\Lambda C+C^\transpose \Lambda\\
&+C^\transpose P_-C+Q+\int_{\Zmark}\Big[\Xi(e)E(e)+E(e)^\transpose \Xi(e)\\
&\qquad+E(e)^\transpose(P_-+\Xi(e))E(e)\Big]\nu^\circ(\dd e).
\end{aligned}
\end{equation}

\begin{definition}\label{def:regular-recursive}
A symmetric triple $(P,\Lambda,\Xi)$ is a \emph{regular physical Riccati solution} if $P$ is adapted and c\`adl\`ag, $\Lambda,\Xi$ are predictable, and
\begin{equation}\label{eq:physical-regularity}
\begin{gathered}
\mathop{\rm ess\,sup}_{\omega}\sup_{0\le s\le T}\|P_s(\omega)\|<\infty,\\
\E^{\Pp}\int_0^T\left(|\Lambda_s|^2+
\int_{\Zmark}|\Xi(s,e)|^2\nu(\dd e)\right)\dd s<\infty.
\end{gathered}
\end{equation}
It satisfies, indistinguishably on $[0,T]$, the physical Riccati equation
\begin{equation}\label{eq:RSREJ}
\left\{\begin{aligned}
\dd P_s={}&-\Big[\mathsf H_s+a_sP_{s-}+\beta_s\Lambda_s
-\mathsf M_s\mathsf N_s^{-1}\mathsf M_s^\transpose\\
&\qquad+\int_{\Zmark}\eta(e)\Xi(s,e)\nu(\dd e)\Big]\dd s\\
&+\Lambda_s\dd W_s+
\int_{\Zmark}\Xi(s,e)\widetilde\mu(\dd s,\dd e),\\
P_T={}&G.
\end{aligned}\right.
\end{equation}
and, for a deterministic $c>0$,
\[
\mathsf N_s\succeq cI_m\qquad\dd s\otimes\dd\Pp\text{-a.e.}
\]
The full drift and its integrability condition are
\begin{equation}\label{eq:physical-drift}
\begin{aligned}
\mathfrak F={}&\mathsf H+aP_-+\beta\Lambda
+\int_{\Zmark}\eta(e)\Xi(e)\nu(\dd e)\\
&-\mathsf M\mathsf N^{-1}\mathsf M^\transpose,
\qquad \E^{\Pp}\int_0^T|\mathfrak F_s|\dd s<\infty.
\end{aligned}
\end{equation}
\end{definition}
For comparison and uniqueness we restrict to physical solutions whose $\kappa$-scaled triples also satisfy \defref{def:regular-Riccati}.

\begin{theorem}\label{thm:RSREJ}
Under \namedref{Assumptions}{ass:state}, \ref{ass:weights}, \ref{ass:recursive-coeff}, \ref{ass:jump-invertible} and \ref{ass:UC}, the triple $(P,\Lambda,\Xi)$ in \eqref{eq:P-pullback} is a regular physical Riccati solution of \eqref{eq:RSREJ}. Moreover,
\begin{equation}\label{eq:Nrec-positive}
\mathsf N(t)\succeq \frac{c_N}{\overline\kappa}\Id_m
= c_N e^{-\|a\|_\infty T}\Id_m
\qquad\dd t\otimes\dd\Pp\text{-a.e.},
\end{equation}
and, for every $(t,\xi)\in[0,T]\times\cX_t^2$,
\begin{equation}\label{eq:Vrec-quadratic}
V^{\mathrm r}(t,\xi)=\ip{P(t)\xi}{\xi}.
\end{equation}
The value determines the canonical kernel and its martingale coefficients. \namedref{Propositions}{prop:Riccati-maximality}--\ref{prop:Riccati-uniqueness} give comparison among regular transformed solutions and uniqueness in the feedback-admissible class.
\end{theorem}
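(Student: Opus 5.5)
The plan is to start from the canonical transformed triple $(\widehat P,\widehat\Lambda,\widehat\Xi)$ of \propref{thm:nonrecursive} and obtain every claim by the scaling \eqref{eq:P-pullback} together with the noise substitutions \eqref{eq:Q-noises}. There are four steps: the algebraic identities for the coefficients, the drift computation, the value identity, and the integrability transfer to $\Pp$.

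\emph{Step 1: coefficient identities.} Since $\widehat R=\kappa R$, $\widehat S=\kappa S$, $\widehat Q=\kappa Q$, and $\kappa$ is continuous (so $\widehat P_-=\kappa P_-$), each term of \eqref{eq:Nhat}--\eqref{eq:Hhat} is $\kappa$ times the corresponding term of \eqref{eq:Nrec}--\eqref{eq:Hrec}. Hence
\[
\widehat{\mathsf N}=\kappa\mathsf N,\qquad
\widehat{\mathsf M}=\kappa\mathsf M,\qquad
\widehat{\mathsf H}=\kappa\mathsf H,
\]
and therefore $\widehat{\mathsf M}\widehat{\mathsf N}^{-1}\widehat{\mathsf M}^\transpose=\kappa\,\mathsf M\mathsf N^{-1}\mathsf M^\transpose$. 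Combining $\widehat{\mathsf N}\succeq c_N\Id_m$ with $\kappa\le\overline\kappa$ gives \eqref{eq:Nrec-positive} $\dd t\otimes\dd\Qq$-a.e. Equivalence of the measures makes this hold $\dd t\otimes\dd\Pp$-a.e.

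\emph{Step 2: the drift.} Apply the product rule to $P=\kappa^{-1}\widehat P$, using $\dd\kappa^{-1}=-a\kappa^{-1}\dd s$. This produces the extra drift term $-aP_-\dd s$. Rewriting the martingale part via \eqref{eq:Q-noises}, we have $\dd W^\circ=\dd W-\beta\dd s$ and $\widetilde\mu^\circ=\widetilde\mu-\eta\nu\dd s$. These substitutions contribute $-\beta\Lambda-\int\eta\Xi\dd\nu$ to the drift. The result is exactly \eqref{eq:RSREJ}, with drift $-\mathfrak F$ and terminal value $P_T=\kappa_T^{-1}\widehat G=G$. The substitution must be justified for the stochastic integrals, and \lemref{lem:local-measures} does this. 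It needs $\Lambda\in\mathcal H^2_{\rm loc}$ and $\Xi\in\mathcal L^1_{\rm loc}$; these follow from \eqref{eq:Riccati-H2}, the bound $|\widehat\Xi|\le 2K$ of \lemref{lem:bounded-Xi}, and the finiteness of $\nu$. Boundedness of $P$ is inherited from $\widehat P$ because $\kappa\ge\underline\kappa$.

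\emph{Step 3: the value.} For $\xi\in\cX_t^2$, \propref{prop:value-equality} together with \eqref{eq:Vhat-quadratic} gives $V^{\mathrm r}(t,\xi)=\kappa_t^{-1}\langle\widehat P_t\xi,\xi\rangle=\langle P_t\xi,\xi\rangle$. Canonicity of the kernel and uniqueness of its integrands pass through the scaling: polarization determines $P$, and uniqueness of the special semimartingale decomposition under $\Pp$ determines $\Lambda$ and $\Xi$.

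\emph{Step 4: physical integrability (main obstacle).} It remains to prove \eqref{eq:physical-regularity} and the drift bound in \eqref{eq:physical-drift}. These are $\Pp$-expectations, whereas all the available estimates are under $\Qq$. The plan is to transfer moments by Hölder's inequality, as in \lemref{lem:space-transfer}:
\[
\E^{\Pp}\Gamma=\E^{\Qq}[\mathcal E_T^{-1}\Gamma]\le(\E^{\Qq}\mathcal E_T^{-2})^{1/2}(\E^{\Qq}\Gamma^2)^{1/2},
\]
where \lemref{lem:density-moments} controls the first factor. We then apply this transfer to each integrability quantity in turn:
\begin{itemize}
\item For $\Gamma=\int_0^T|\widehat\Lambda|^2\dd s$ and $\Gamma=\int_0^T\!\int|\widehat\Xi|^2\dd\nu^\circ\dd s$, the finiteness of $\E^{\Qq}\Gamma^2$ is \eqref{eq:base-moments} with $p=2$. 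The bounds $\kappa^{-1}\le\underline\kappa^{-1}$ and $\nu\le c\,\nu^\circ$ then give \eqref{eq:physical-regularity}.
\item For the drift, write $\kappa|\mathfrak F|\le|\widehat{\mathsf H}-\widehat{\mathsf L}|+C|\widehat P_-|+C|\widehat\Lambda|+C\int|\widehat\Xi|\dd\nu$. The first term is handled by the $p$-moment bound in \eqref{eq:base-moments}. The remaining terms are controlled by the same square-integral moments and the bounded $\widehat P$ and $\widehat\Xi$.
\end{itemize}

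The delicate point in Step 4 is that $\Qq$-square integrability of the Riccati integrands alone would not give $\Pp$-integrability. The argument really uses the BMO-type energy bound and the higher moments of \lemref{lem:riccati-moments}. This is the conditional-energy transfer announced in the introduction, and I expect it to be the main step to check carefully. The comparison and uniqueness statements are deferred to the cited propositions.
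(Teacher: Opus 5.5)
Your proposal is correct and follows the paper's proof essentially step for step. The steps are: the term-by-term scaling $(\widehat{\mathsf N},\widehat{\mathsf M},\widehat{\mathsf H})=\kappa(\mathsf N,\mathsf M,\mathsf H)$; the product rule for $\kappa^{-1}\widehat P$ with the noise substitutions, which recovers \eqref{eq:RSREJ}; the bound $\mathsf N=\kappa^{-1}\widehat{\mathsf N}\succeq c_N\overline\kappa^{-1}\Id_m$; the value identity from \eqref{eq:value-scaling} and \eqref{eq:Vhat-quadratic}; and the transfer of integrability to $\Pp$ via H\"older with $\E^{\Qq}\mathcal E_T^{-2}$ and the higher moments of \lemref{lem:riccati-moments}, which you rightly single out as the essential ingredient, exactly as the paper uses it.
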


\begin{proof}
The scaling relations \eqref{eq:transformed-weights} and \eqref{eq:P-pullback} imply, term by term,
\begin{equation}\label{eq:scale-NMH}
(\widehat{\mathsf N},\widehat{\mathsf M},\widehat{\mathsf H})
=\kappa(\mathsf N,\mathsf M,\mathsf H).
\end{equation}
The continuous scalar $\kappa$ also scales the quadratic Riccati term. Since $\widehat P=\kappa P$ and $\dd\kappa=a\kappa\dd t$,
$\dd\widehat P(t)=\kappa(t)\dd P(t)+a(t)\kappa(t)P(t-)\dd t$.
Rewriting the martingale terms in \eqref{eq:SREJ-hat} under $\Pp$ via $\dd W^{\circ}=\dd W-\beta\dd t$ and $\widetilde\mu^{\circ}=\widetilde\mu-\eta\nu\dd t$, and dividing by $\kappa$ gives \eqref{eq:RSREJ}; the terminal condition follows from $\widehat P(T)=\kappa(T)G$.

From \eqref{eq:Nhat-positive} and \eqref{eq:kappa}, $\mathsf N=\kappa^{-1}\widehat{\mathsf N}\succeq c_N\overline\kappa^{-1}\Id_m$, proving \eqref{eq:Nrec-positive}. For $H=\int_0^T|\widehat\Lambda_s|^2\dd s$ and every finite $p\ge1$, \namedref{Lemmas}{lem:riccati-moments} and \ref{lem:density-moments} give
\[
\E^{\Pp}H^p=\E^{\Qq}[\mathcal E_T^{-1}H^p]
\le(\E^{\Qq}\mathcal E_T^{-2})^{1/2}(\E^{\Qq}H^{2p})^{1/2}<\infty.
\]
Bounded $\kappa^{-1}$ and $\nu\le\varepsilon_\eta^{-1}\nu^\circ$ transfer all finite integrated moments of both pulled-back martingale coefficients to $\Pp$. For the drift use
$\mathfrak F=\kappa^{-1}(\widehat{\mathsf H}-\widehat{\mathsf L})+aP_-+\beta\Lambda+\int\eta\Xi\dd\nu$.
The first time-integral is $L^1(\Pp)$ by its second moment under $\Qq$ and the same H\"older estimate; the others follow from bounded coefficients and the proven energies. Common localization identifies the physical integral equation, verifying \defref{def:regular-recursive}. Finally, \eqref{eq:value-scaling} and \eqref{eq:Vhat-quadratic} give
$V^{\mathrm r}(t,\xi)=\kappa(t)^{-1}\ip{\widehat P(t)\xi}{\xi}=\ip{P(t)\xi}{\xi}$.
\end{proof}

\begin{corollary}\label{cor:benchmarks}
Under the hypotheses of \thmref{thm:RSREJ}:
\begin{enumerate}
\item If $a=\beta=\eta=0$, equation \eqref{eq:RSREJ} reduces exactly to the nonrecursive SREJ with the original coefficients and compensator $\nu$.
\item If $\beta=\eta=0$ but $a\ne0$, then $A^{\circ}=A$, $B^{\circ}=B$, $\nu^{\circ}=\nu$, and the only new Riccati drift term is $aP$.
\item The recursive and transformed feedback gains coincide:
\begin{equation}\label{eq:gain-equality}
\Theta(t):=\mathsf N(t)^{-1}\mathsf M(t)^\transpose
=\widehat{\mathsf N}(t)^{-1}\widehat{\mathsf M}(t)^\transpose=\widehat\Theta(t).
\end{equation}
\end{enumerate}
\end{corollary}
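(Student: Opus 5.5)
The three items follow directly from the definitions \eqref{eq:transformed-drift}, \eqref{eq:Q-noises}, \eqref{eq:kappa} and the scaling identity \eqref{eq:scale-NMH} established in the proof of \thmref{thm:RSREJ}. No new analysis is needed; the work is substitution into \eqref{eq:RSREJ} and \eqref{eq:Nrec}--\eqref{eq:Hrec}. The conclusions are taken in the same sense as \thmref{thm:RSREJ}, namely as statements about the regular physical Riccati solution $(P,\Lambda,\Xi)$ of \eqref{eq:P-pullback}.

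For item~1, take $a=\beta=\eta=0$. Then $\kappa\equiv1$ and $\mathcal E\equiv1$, so $\Qq=\Pp$, $\nu^\circ=\nu$, $W^\circ=W$ and $\widetilde\mu^\circ=\widetilde\mu$. By \eqref{eq:transformed-drift}, $A^\circ=A$ and $B^\circ=B$. Substituting these into \eqref{eq:Nrec}--\eqref{eq:Hrec}, every $\nu^\circ$ becomes $\nu$ and every transformed drift coefficient becomes the original one. In the drift of \eqref{eq:RSREJ}, the extra terms $aP_-$, $\beta\Lambda$ and $\int\eta\Xi\,\dd\nu$ vanish. What remains is exactly the nonrecursive SREJ with the original data and compensator $\nu$, with terminal value $G$.

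For item~2, take $\beta=\eta=0$ with $a$ arbitrary. Again $\mathcal E\equiv1$ and $\nu^\circ=\nu$, and \eqref{eq:transformed-drift} gives $A^\circ=A$ and $B^\circ=B$. Hence $\mathsf N$, $\mathsf M$ and $\mathsf H$ have the same form as in the nonrecursive case. Of the three extra drift terms in \eqref{eq:RSREJ}, only $aP_-$ survives; it equals $aP$ inside the $\dd s$ integral, as noted after \eqref{eq:state-P}.

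For item~3, use \eqref{eq:scale-NMH}: $\widehat{\mathsf N}=\kappa\mathsf N$ and $\widehat{\mathsf M}=\kappa\mathsf M$. Since $\kappa>0$ is scalar and $\mathsf N$ is invertible by \eqref{eq:Nrec-positive}, we get
$\widehat{\mathsf N}^{-1}\widehat{\mathsf M}^\transpose=\kappa^{-1}\mathsf N^{-1}\kappa\,\mathsf M^\transpose=\mathsf N^{-1}\mathsf M^\transpose$.

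The only point needing care is that \eqref{eq:scale-NMH} holds term by term. Each term of \eqref{eq:Nhat}--\eqref{eq:Hhat} is linear in exactly one of $\widehat P_-,\widehat\Lambda,\widehat\Xi,\widehat Q,\widehat R,\widehat S$. Each of these equals $\kappa$ (or $\kappa_-$) times its physical counterpart, and $\kappa_-=\kappa$ because $\kappa$ is continuous. The scaling therefore passes through every term, and this is the step I would check most carefully.
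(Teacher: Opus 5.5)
Your proposal is correct and follows the paper's proof. Items 1--2 come from direct substitution of $\beta=\eta=0$ (and $a=0$) into \eqref{eq:transformed-drift}, $\nu^\circ=(1+\eta)\nu$ and \eqref{eq:RSREJ}, and item 3 from cancelling the positive scalar $\kappa$ in \eqref{eq:scale-NMH}; your check that each term of \eqref{eq:Nhat}--\eqref{eq:Hhat} is linear in exactly one scaled quantity, with $\kappa_-=\kappa$ by continuity, is precisely what justifies the term-by-term scaling the paper asserts.
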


\begin{proof}
Parts (1)--(2) follow by substitution in \eqref{eq:RSREJ}. For (3), scalar scaling in \eqref{eq:scale-NMH} gives $(\kappa\mathsf N)^{-1}(\kappa\mathsf M)^\transpose=\mathsf N^{-1}\mathsf M^\transpose$.
\end{proof}

\begin{remark}\label{rmk:LiYong-reduction}
With no Poisson noise and deterministic data, a deterministic Riccati solution has $\Lambda=0$. Writing
$\mathsf N_0=R+D^\transpose PD$ and
$\mathsf M_0=P(B+\beta D)+C^\transpose PD+S^\transpose$,
equation \eqref{eq:RSREJ} takes the homogeneous, nonsingular form of \citet{LiYong2026}, with backward coefficients $a,\beta$:
\begin{equation}\label{eq:deterministic-recursive-Riccati}
\left\{\begin{aligned}
0={}&\dot P+PA+A^\transpose P+C^\transpose PC+Q+aP\\
&+\beta(PC+C^\transpose P)-\mathsf M_0\mathsf N_0^{-1}\mathsf M_0^\transpose,\\
P(T)={}&G.
\end{aligned}\right.
\end{equation}
\end{remark}

\section{Verification and feedback synthesis}\label{sec:feedback}

We first establish completion of squares and identify the canonical feedback. We then characterize physical admissibility and derive comparison and uniqueness.

\subsection{Completion of squares and feedback representation}

\begin{theorem}\label{thm:completion-squares}
Under \namedref{Assumptions}{ass:state}, \ref{ass:weights} and \ref{ass:recursive-coeff}, let a regular transformed Riccati solution be given. For every deterministic $t<T$, $\xi\in L^2_{\Qq,\cF_t}(\R^n)$ and $u\in\cU_{\Qq}[t,T]$, let $X$ solve \eqref{eq:state-Q} and put $w=u+\widehat\Theta X_-$. Then
\begin{equation}\label{eq:completion-Q}
\widehat J(t,\xi;u)=\ip{\widehat P_t\xi}{\xi}
+\E_t^{\Qq}\int_t^T w_s^\transpose\widehat{\mathsf N}_sw_s\dd s.
\end{equation}
The nonnegative integral belongs to $L^1(\Qq)$. In particular, under \thmref{thm:RSREJ}, for $\xi\in\cX_t^2$ and $u\in\cU_{\Pp,\Qq}$,
\begin{equation}\label{eq:completion-squares}
\begin{aligned}
J^{\rm r}(t,\xi;u)=\ip{P_t\xi}{\xi}
+\kappa(t)^{-1}\E_t^{\Qq}\int_t^T
\kappa_s\Delta u_s^\transpose\mathsf N_s\Delta u_s\dd s,
\end{aligned}
\end{equation}
where
$\Delta u=u+\Theta X_-$ and $\Theta=\mathsf N^{-1}\mathsf M^\transpose=\widehat\Theta$.
\end{theorem}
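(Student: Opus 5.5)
The plan is to apply It\^o's formula, under $\Qq$, to the process $X_s^\transpose\widehat P_sX_s$ on $[t,T]$, where $X$ solves \eqref{eq:state-Q}, and then add the running cost $\int_t^s\widehat\ell\dd r$. The state has square-integrable drift and diffusion data, and $\widehat P$ is bounded. The martingale coefficients $(\widehat\Lambda,\widehat\Xi)$ satisfy \eqref{eq:Riccati-H2}, and $|\widehat\Xi|\le2K$ by \lemref{lem:bounded-Xi}.

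The joint jump term needs care. At an atom $(s,e)$ we have
\[
X_s=(I_n+E)X_{s-}+Fu_s,\qquad \widehat P_s=\widehat P_{s-}+\widehat\Xi(s,e),
\]
so the jump of $X^\transpose\widehat PX$ equals
\[
[(I+E)X_-+Fu]^\transpose(\widehat P_-+\widehat\Xi)[(I+E)X_-+Fu]-X_-^\transpose\widehat P_-X_-.
\]
Compensating with $\nu^\circ$ gives exactly the $\nu^\circ$-integrals appearing in $\widehat{\mathsf N},\widehat{\mathsf M},\widehat{\mathsf H}$. Collecting the $\dd s$ terms, the predictable drift of $X^\transpose\widehat PX+\int\widehat\ell$ is
\[
X_-^\transpose(\widehat{\mathsf H}+\widehat\Psi)X_-+2X_-^\transpose\widehat{\mathsf M}u+u^\transpose\widehat{\mathsf N}u,
\]
with $\widehat\Psi=-\widehat{\mathsf H}+\widehat{\mathsf L}$ supplied by \eqref{eq:SREJ-hat}. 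Completing the square with $\widehat\Theta=\widehat{\mathsf N}^{-1}\widehat{\mathsf M}^\transpose$ turns this drift into $w^\transpose\widehat{\mathsf N}w$, where $w=u+\widehat\Theta X_-$. This algebraic step is routine.

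The main obstacle is integrability: the local martingale part must have zero conditional expectation, and the drift pieces must be integrable. None of this is automatic, because $\widehat\Theta$ is only square-integrable ($|\widehat\Theta|\le C(1+|\widehat\Lambda|)$) and $X$ is only in $\mathcal S_{\Qq}$. I would handle it as follows.
\begin{enumerate}
\item Localize with stopping times $\tau_j$, the first time after $t$ at which
\[
\int_t^s\Big(|\widehat\Lambda|^2+|u|^2\Big)\dd r+\sup_{t\le r\le s}|X_r|^2
\]
reaches $j$, capped at $T$. The stopped Brownian integrand $X_-^\transpose\widehat\Lambda X_-+2X_-^\transpose\widehat P(CX_-+Du)$ is then square-integrable, and the jump integrand is bounded by $C(|X_-|^2+|u|^2)$ with finite compensator integral. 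Both stopped martingales therefore have zero $\cF_t$-conditional mean.
\item This yields, for each $j$,
\[
\E_t^{\Qq}\big[X_{\tau_j}^\transpose\widehat P_{\tau_j}X_{\tau_j}+\textstyle\int_t^{\tau_j}\widehat\ell\dd s\big]=\xi^\transpose\widehat P_t\xi+\E_t^{\Qq}\int_t^{\tau_j}w^\transpose\widehat{\mathsf N}w\dd s.
\]
\item Let $j\to\infty$. On the left, dominated convergence applies, since the integrand is bounded by $C(\sup|X|^2+\int(|X|^2+|u|^2))\in L^1(\Qq)$. On the right, monotone convergence applies because $\widehat{\mathsf N}\succeq c_N I$ makes the integrand nonnegative.
\end{enumerate}
This proves \eqref{eq:completion-Q}. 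Finiteness of the left side also shows that $\int w^\transpose\widehat{\mathsf N}w$ lies in $L^1(\Qq)$, using that $\xi^\transpose\widehat P_t\xi\in L^1$.

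For \eqref{eq:completion-squares}, restrict to $\xi\in\cX_t^2$ and $u\in\cU_{\Pp,\Qq}$. Use \eqref{eq:J-scaling}, $\widehat P=\kappa P$ from \eqref{eq:P-pullback}, the scaling $\widehat{\mathsf N}=\kappa\mathsf N$ of \eqref{eq:scale-NMH}, and $\widehat\Theta=\Theta$ from \corref{cor:benchmarks}. Then $w=\Delta u$, and dividing by $\kappa(t)$ gives the stated identity.
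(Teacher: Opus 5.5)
Your proposal is correct and follows essentially the paper's argument. You apply It\^o's formula to $X^\transpose\widehat PX$ with the joint jump term and complete the square through the Riccati drift. You then localize so that the stopped Brownian integral has zero conditional mean, while the jump integral is a true martingale because its integrand is bounded by $C(|X_-|^2+|u|^2)$. A monotone/dominated limit finally yields the identity and the $L^1(\Qq)$ bound for the nonnegative square term.

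The only difference is cosmetic. The paper stops a continuous clock built from the accumulated square term and the Brownian energy, and then concludes by a class-$(D)$ domination of the local martingale. You stop a clock built from $\int|\widehat\Lambda|^2$, $\int|u|^2$ and $\sup|X|^2$, and pass to the limit directly in the stopped conditional identity, which works equally well.
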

\begin{proof}
Write $x=X_-$, $\Sigma=Cx+Du$ and $\Gamma(e)=E(e)x+F(e)u$. For $f(P,x)=x^\transpose Px$, the raw joint jump is
\[
L(e)=(x+\Gamma(e))^\transpose(\widehat P_-+\widehat\Xi(e))
(x+\Gamma(e))-x^\transpose\widehat P_-x.
\]
Suppressing the mark argument, the jump contribution to the drift in It\^o's formula is
\begin{equation}\label{eq:joint-jump-remainder}
L-2x^\transpose\widehat P_-\Gamma-x^\transpose\widehat\Xi x
=\Gamma^\transpose(\widehat P_-+\widehat\Xi)\Gamma+2x^\transpose\widehat\Xi\Gamma.
\end{equation}
Together with the continuous covariation, this gives the matrices \eqref{eq:Nhat}--\eqref{eq:Hhat}. Substitution of \eqref{eq:SREJ-hat} completes the square:
\begin{equation}\label{eq:ito-square-differential}
\dd(X_s^\transpose\widehat P_sX_s)+\widehat\ell_s\dd s
=w_s^\transpose\widehat{\mathsf N}_sw_s\dd s+\dd\mathcal M_s.
\end{equation}
Here $\widehat\ell_s=\widehat\ell(s,X_{s-},u_s)$ and $\mathcal M_t=0$. The Brownian coefficient of $\mathcal M$ is $K=x^\transpose\widehat\Lambda x+2x^\transpose\widehat P_-\Sigma$, and its jump coefficient is $L$ above. Pathwise boundedness of the c\`adl\`ag state gives $\int|K|^2\dd s<\infty$ almost surely. By boundedness of $\widehat P,\widehat\Xi,E,F$,
\[
|L|\le C(|x|^2+|u|^2),\quad
\E^{\Qq}\int_t^T\int|L|\dd\nu^\circ\dd s<\infty.
\]
Thus the jump integral is a uniformly integrable martingale.

Set $\mathcal B_s=X_s^\transpose\widehat P_sX_s+\int_t^s\widehat\ell_r\dd r$ and $A_s=\int_t^s w_r^\transpose\widehat{\mathsf N}_rw_r\dd r$. Initially $A_T$ is pathwise finite, since $\widehat{\mathsf N}$ is bounded and $\int|\widehat\Theta|^2<\infty$ almost surely. Also, simultaneously for all $s$,
\[
|\mathcal B_s|\le\mathcal D:=C\sup_{t\le r\le T}|X_r|^2+C\int_t^T|u_r|^2\dd r\in L^1(\Qq).
\]
Stop the continuous clock $A_s+\int_t^s|K_r|^2\dd r$ at $j$, capped at $T$. Then the stopped Brownian integral has zero mean, as does the jump integral. The stopped identity gives $0\le\E^{\Qq}A_{\rho_j}\le2\E^{\Qq}\mathcal D$. Monotone convergence yields $A_T\in L^1(\Qq)$. Now for every stopping time $\sigma\in[t,T]$,
\begin{equation}\label{eq:completion-UI-domination}
|\mathcal M_\sigma|=|\mathcal B_\sigma-\mathcal B_t-A_\sigma|
\le2\mathcal D+A_T\in L^1(\Qq).
\end{equation}
The same dominating variable works for all $\sigma$, so the local martingale is of class $(D)$ and uniformly integrable. Conditional expectation at $T$ proves \eqref{eq:completion-Q}. Scalar scaling gives \eqref{eq:completion-squares} on the common domain.
\end{proof}

\begin{corollary}\label{cor:canonical-feedback}
Under \propref{thm:nonrecursive}, the existing transformed optimizer satisfies
\begin{equation}\label{eq:canonical-feedback-Q}
\widehat u_s^{t,\xi,*}=-\widehat\Theta_s\widehat X_{s-}^{t,\xi,*}
\quad\dd s\otimes\dd\Qq\text{-a.e.}
\end{equation}
\end{corollary}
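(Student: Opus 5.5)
The plan is to combine the completion-of-squares identity \eqref{eq:completion-Q} of \thmref{thm:completion-squares} with the value identity \eqref{eq:Vhat-quadratic} of \propref{thm:nonrecursive}. Fix a deterministic $t<T$ and $\xi\in L^2_{\Qq,\cF_t}(\R^n)$, and write $\widehat u^*=\widehat u^{t,\xi,*}$ and $\widehat X^*=\widehat X^{t,\xi,*}$. The canonical triple from \propref{thm:nonrecursive} is a regular transformed Riccati solution with $c_N=\delta$, so \thmref{thm:completion-squares} applies to it. Taking $u=\widehat u^*\in\cU_{\Qq}[t,T]$ and $w^*=\widehat u^*+\widehat\Theta\widehat X^*_-$, we obtain
\[
\widehat J(t,\xi;\widehat u^*)=\ip{\widehat P_t\xi}{\xi}+\E_t^{\Qq}\int_t^T (w^*_s)^\transpose\widehat{\mathsf N}_sw^*_s\dd s .
\]

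Since $\widehat u^*$ is optimal, $\widehat J(t,\xi;\widehat u^*)=\widehat V(t,\xi)=\ip{\widehat P_t\xi}{\xi}$ $\Qq$-a.s. Both sides are finite, because $\xi\in L^2$ and $\widehat P$ is bounded. The conditional integral therefore vanishes almost surely. Taking expectations, and using that the integrand is nonnegative and in $L^1(\Qq)$ by \thmref{thm:completion-squares}, gives
\[
\E^{\Qq}\int_t^T (w^*_s)^\transpose\widehat{\mathsf N}_sw^*_s\dd s=0 .
\]
The bound \eqref{eq:Nhat-positive} with $c_N=\delta$ then yields $\delta\,\E^{\Qq}\int_t^T|w^*_s|^2\dd s=0$. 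Hence $w^*=0$ holds $\dd s\otimes\dd\Qq$-a.e. on $[t,T]$, which is exactly \eqref{eq:canonical-feedback-Q}.

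The main point to check is that the value and the kernel in the two cited results are the same object. \thmref{thm:completion-squares} uses the canonical $\widehat P$, while \eqref{eq:Vhat-quadratic} identifies $\widehat V(t,\xi)$ with the quadratic form of that same canonical kernel. \propref{thm:nonrecursive} states this directly, so the two identities may be compared. A second point is that $w^*$ is only known to be predictable. Because $\widehat\Theta$ is merely square integrable by \lemref{lem:bounded-Xi}, the product $\widehat\Theta\widehat X^*_-$ need not be square integrable a priori. The argument never requires this: it uses only the $L^1$ integrability of the weighted square, which \thmref{thm:completion-squares} already supplies.
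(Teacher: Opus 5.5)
Your argument is correct, but it takes a different route from the paper.

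The paper reads the identity directly off \eqref{eq:feedback-before-Riccati}, that is, from Step~3 in the proof of \propref{thm:nonrecursive}. Along the optimal pair the dynamic-programming process $X^\transpose\widehat PX+\int\widehat\ell$ is a martingale, so the nonnegative drift $\mathcal D_s(X^*_-,u^*)$ vanishes. Completing the square in $\mathcal D$ with $\widehat{\mathsf N}\succeq\delta I_m$ then forces $u^*=-\widehat\Theta X^*_-$. The paper adds that the predictable exceptional set is not charged by the Poisson atoms (compensation). Hence the identity can also be used inside the jump integral of the state equation, which \thmref{thm:feedback} needs in order to identify the closed-loop state.

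You instead combine the integrated verification identity \eqref{eq:completion-Q} of \thmref{thm:completion-squares} with \eqref{eq:Vhat-quadratic}, and conclude from the vanishing of a nonnegative $L^1(\Qq)$ remainder. There is no circularity, since \thmref{thm:completion-squares} does not use this corollary.

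What your route buys:
\begin{itemize}
\item It relies only on stated results, not on the interior of a proof.
\item It shows more generally that any control attaining $\ip{\widehat P_t\xi}{\xi}$ must be the feedback.
\end{itemize}
What it costs: it leans on the class-$(D)$ argument behind \eqref{eq:completion-Q}. The paper's route is a purely local consequence of the Doob--Meyer decomposition it has already carried out.

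For the later use of the corollary, you could add the one-line remark that a predictable $\dd s\otimes\dd\Qq$-null set is avoided by the atoms of $\mu$. Then $\widehat u^*$ and $-\widehat\Theta\widehat X^*_-$ produce the same stochastic integrals.
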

\begin{proof}
Equation \eqref{eq:feedback-before-Riccati} gives the identity. Compensation of its predictable exceptional set makes it valid in the jump state integral as well.
\end{proof}

\subsection{Physical admissibility and attainment}

\begin{definition}\label{ass:cross-admissible}
For a fixed $(t,\xi)$ with $\xi\in\cX_t^2$, the canonical transformed optimizer is physically admissible if
\begin{equation}\label{eq:canonical-P-integrability}
\E^{\Pp}\int_t^T|\widehat u^{t,\xi,*}(s)|^2\dd s<\infty.
\end{equation}
\end{definition}
By \lemref{lem:space-transfer}, $\widehat u^{t,\xi,*}\in\cU^{2p}_{\Qq}[t,T]$ for some $p>1$ is sufficient.

\begin{theorem}\label{thm:feedback}
Under the assumptions of \thmref{thm:RSREJ}, fix a deterministic $t<T$ and $\xi\in\cX_t^2$. The recursive problem on $\cU_{\Pp,\Qq}[t,T]$ has an optimal control if and only if \eqref{eq:canonical-P-integrability} holds. In that case $u^*=\widehat u^{t,\xi,*}$ is the unique optimizer in $\cU_{\Pp,\Qq}[t,T]$, its state belongs to $\mathcal S_{\Pp}[t,T]\cap\mathcal S_{\Qq}[t,T]$, and
\begin{equation}\label{eq:feedback-value}
J^{\mathrm r}(t,\xi;u^*)=V^{\mathrm r}(t,\xi)=\ip{P(t)\xi}{\xi}.
\end{equation}
Its feedback representation is
\begin{equation}\label{eq:recursive-feedback-law}
u^*(s)=-\Theta(s)X^*(s-),
\end{equation}
and $X^*$ is the pathwise unique solution of
\begin{equation}\label{eq:closed-loop-P}
\left\{\begin{aligned}
\dd X_s^*={}&(A_s-B_s\Theta_s)X_{s-}^*\dd s\\
&+(C_s-D_s\Theta_s)X_{s-}^*\dd W_s\\
&+\int_{\Zmark}[E(s,e)-F(s,e)\Theta_s]X_{s-}^*
\widetilde\mu(\dd s,\dd e),\\
X_t^*={}&\xi.
\end{aligned}\right.
\end{equation}

\end{theorem}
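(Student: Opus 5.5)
The plan is to combine the completion-of-squares identity of \thmref{thm:completion-squares} with the value equality of \propref{prop:value-equality}. The argument is a short ``if and only if'' built around the canonical optimizer $\widehat u^{t,\xi,*}$ of \propref{thm:nonrecursive}.

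\emph{Sufficiency.} Assume \eqref{eq:canonical-P-integrability}. Then $\widehat u^{t,\xi,*}\in\cU_{\Pp}[t,T]\cap\cU_{\Qq}[t,T]=\cU_{\Pp,\Qq}[t,T]$. Since $\xi\in\cX_t^2$, the physical equation \eqref{eq:state-P} and the transformed equation \eqref{eq:state-Q} define the same pathwise state. The $\Pp$-estimate \eqref{eq:state-estimate-P} and the $\Qq$-estimate then place it in $\mathcal S_{\Pp}\cap\mathcal S_{\Qq}$. By \eqref{eq:J-scaling}, \eqref{eq:Vhat-quadratic} and \eqref{eq:value-scaling},
\[
J^{\rm r}(t,\xi;\widehat u^{t,\xi,*})=\kappa(t)^{-1}\widehat V(t,\xi)=\ip{P(t)\xi}{\xi}=V^{\rm r}(t,\xi).
\]
This gives attainment and \eqref{eq:feedback-value}.

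\emph{Feedback law.} \corref{cor:canonical-feedback} gives the feedback form $u^*=-\widehat\Theta X^*_-$ $\dd s\otimes\dd\Qq$-a.e. The exceptional set is predictable and has null product measure, so by equivalence of the measures it is also $\dd s\otimes\dd\Pp$-null. Compensation removes it from the jump integral. Together with $\Theta=\widehat\Theta$ from \eqref{eq:gain-equality}, this yields \eqref{eq:recursive-feedback-law}. Substituting the feedback into \eqref{eq:state-P} gives \eqref{eq:closed-loop-P}.

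\emph{Pathwise uniqueness of \eqref{eq:closed-loop-P}.} The gain satisfies only $\int_0^T|\Theta|^2\dd s<\infty$ pathwise, by \lemref{lem:bounded-Xi} and \thmref{thm:RSREJ}. The equation is linear with bounded jump coefficients, since $F\Theta$ enters only through $\nu$-integrals of $\Theta X_-$. I would argue between consecutive Poisson atoms. On each such interval the difference of two solutions solves a continuous linear SDE with pathwise integrable drift and locally square-integrable diffusion coefficients. The stochastic exponential (the inverse-flow argument already used in Step~3 of the proof of \propref{thm:nonrecursive}) forces this difference to vanish. At an atom the jump map is determined by the left limit. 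Finite activity then concludes the argument.

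\emph{Uniqueness of the optimizer and necessity.} Let $u\in\cU_{\Pp,\Qq}[t,T]$ be any optimizer. The identity \eqref{eq:completion-squares} together with $J^{\rm r}(t,\xi;u)=\ip{P_t\xi}{\xi}$ forces
\[
\E_t^{\Qq}\int_t^T\kappa_s\,\Delta u_s^\transpose\mathsf N_s\,\Delta u_s\dd s=0.
\]
Since $\mathsf N\succeq cI_m$, this gives $u=-\widehat\Theta X^u_-$ $\dd s\otimes\dd\Qq$-a.e. Hence $X^u$ solves the transformed closed-loop equation. By the pathwise uniqueness just described, $X^u=\widehat X^{t,\xi,*}$ and $u=\widehat u^{t,\xi,*}$. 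This proves uniqueness. Because $u$ was assumed to lie in $\cU_{\Pp}$, the canonical optimizer satisfies \eqref{eq:canonical-P-integrability}, which is necessity.

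An alternative route to the same conclusion uses the uniqueness of the open-loop optimizer in $\cU_{\Qq}$ from \propref{thm:nonrecursive}. Indeed, $\widehat J(t,\xi;u)=\widehat V(t,\xi)$ together with $u\in\cU_{\Qq}$ already identifies $u$.

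\emph{Main obstacle.} I expect the delicate point to be the pathwise uniqueness for \eqref{eq:closed-loop-P} when the gain is unbounded and only locally square integrable. The key difficulty is making rigorous that the Brownian inverse flow stays invertible up to each jump. If the identification via uniqueness of the $\Qq$-open-loop optimizer suffices, this step reduces to showing that every solution of \eqref{eq:closed-loop-P} lies in $\mathcal S_{\Qq}$ and yields a control in $\cU_{\Qq}$. That in turn is what the completion-of-squares bound delivers.
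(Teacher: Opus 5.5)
Your proposal is correct. The sufficiency step, the transfer of \eqref{eq:canonical-feedback-Q} to \eqref{eq:recursive-feedback-law} via $\Theta=\widehat\Theta$, and the between-atoms proof of pathwise uniqueness for \eqref{eq:closed-loop-P} all match the paper's proof.

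The genuine difference is how you identify an arbitrary minimizer $u\in\cU_{\Pp,\Qq}[t,T]$. You use \eqref{eq:completion-squares} with $\mathsf N\succeq cI_m$ to get $u=-\Theta X^u_-$, and then closed-loop pathwise uniqueness to get $u=\widehat u^{t,\xi,*}$. The paper uses what you list as your alternative. By \eqref{eq:J-scaling} and \eqref{eq:value-scaling}, $\widehat J(t,\xi;u)=\widehat V(t,\xi)$, and the uniqueness of the transformed open-loop optimizer in \propref{thm:nonrecursive} finishes the argument.

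The paper's route is shorter. It also separates the identification of the optimizer from the delicate closed-loop uniqueness under an unbounded gain, which is then needed only for the theorem's last assertion. Your route uses only the Riccati structure, in the spirit of \propref{prop:Riccati-uniqueness}, but it makes that delicate step load-bearing.

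Two small corrections. First, your closing remark under ``Main obstacle'' is circular. \thmref{thm:completion-squares} assumes $u\in\cU_{\Qq}[t,T]$, and its domination by $\mathcal D\in L^1(\Qq)$ relies on the $\Qq$-state estimate. So it cannot show that an arbitrary solution of \eqref{eq:closed-loop-P} lies in $\mathcal S_{\Qq}$, and pathwise uniqueness must be proved directly, as you and the paper do. Second, the jump coefficient $E-F\Theta$ is not bounded. The argument only needs two facts: the update $(I_n+E-F\Theta)X_-$ at each atom is a well-defined linear map, and $\int(E-F\Theta)\dd\nu$ is pathwise time-integrable. The latter follows from $\int_t^T|\Theta|^2\dd s<\infty$ almost surely.
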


\begin{proof}
By \propref{thm:nonrecursive}, the transformed optimizer is unique. Equality of infima and positive cost scaling identify any common-domain minimizer with it, proving necessity of \eqref{eq:canonical-P-integrability}. Conversely, this condition places the optimizer in both control spaces. Solving the state equation under each measure, the noise substitution and pathwise uniqueness identify the states; their $L^2$ estimates give the claimed state spaces. Cost scaling proves \eqref{eq:feedback-value} and transfers uniqueness.

\corref{cor:canonical-feedback} gives \eqref{eq:canonical-feedback-Q}. Gain invariance then yields \eqref{eq:recursive-feedback-law} and \eqref{eq:closed-loop-P}. Pathwise uniqueness follows by localization: \lemref{lem:bounded-Xi} gives $\int_t^T|\Theta|^2\dd s<\infty$ almost surely, and equivalence gives the same statement under $\Pp$. Between Poisson jumps its drift is $A-B\Theta-\int(E-F\Theta)\dd\nu$ and its diffusion is $C-D\Theta$, with the required time integrability; at a jump its update is a specified linear map. Finite activity completes the pathwise argument. The open-loop estimates above give moment integrability.
\end{proof}

Bounded gain verifies \eqref{eq:canonical-P-integrability}; \corref{cor:Dzero} gives a coefficient condition ensuring this bound.

\begin{proposition}\label{prop:bounded-gain}
Under the assumptions of \thmref{thm:RSREJ}, fix a deterministic $t<T$ and suppose $\Theta$ is essentially bounded on $[t,T]\times\Omega$ and $\xi\in\cX_t^2$. Then \eqref{eq:canonical-P-integrability} holds, and the unique recursive optimizer is $u^*=-\Theta X^*_-$ with $X^*$ solving \eqref{eq:closed-loop-P}.
\end{proposition}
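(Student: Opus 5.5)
The plan is to show that, with bounded gain, the closed-loop equation \eqref{eq:closed-loop-P} has bounded coefficients. Its solution then has moments of every order under $\Pp$. The resulting control $-\Theta X^*_-$ has finite physical energy, and we identify it with the transformed optimizer; \thmref{thm:feedback} then does the rest.

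\emph{Step 1: the closed loop under $\Pp$.} Since $\Theta$ is essentially bounded on $[t,T]\times\Omega$ and $A,B,C,D,E,F$ are bounded, the closed-loop coefficients are bounded. These are $A-B\Theta$, $C-D\Theta$ and $E-F\Theta$, with the latter $\cP\otimes\mathscr B(\Zmark)$-measurable. Hence \eqref{eq:closed-loop-P} is a linear SDE with bounded random coefficients, covered by the standard theory behind \eqref{eq:state-estimate-P}. Because $\xi\in\cX_t^2\subset L^2_{\Pp,\cF_t}$, it has a unique c\`adl\`ag solution $\bar X$ with $\E^{\Pp}\sup_s|\bar X_s|^2\le C\E^{\Pp}|\xi|^2$. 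Set $\bar u=-\Theta\bar X_-$. Then $\E^{\Pp}\int_t^T|\bar u|^2\dd s\le\|\Theta\|_\infty^2 T\,\E^{\Pp}\sup|\bar X|^2<\infty$, so $\bar u\in\cU_{\Pp}[t,T]$.

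\emph{Step 2: the closed loop under $\Qq$.} The same argument applies after the noise substitution \eqref{eq:Q-noises}. The $\Qq$-form has drift $A^\circ-B^\circ\Theta$, which is again bounded, and $\xi\in L^2_{\Qq}$. So the $\Qq$-closed-loop equation has a unique solution in $\mathcal S_{\Qq}$, and $-\Theta X_-$ lies in $\cU_{\Qq}$. By the pathwise uniqueness already noted in the proof of \thmref{thm:feedback}, together with \lemref{lem:local-measures} for the stochastic integrals, the two solutions coincide. Thus $\bar u\in\cU_{\Pp,\Qq}[t,T]$, and $\bar X$ is the state of $\bar u$ in both \eqref{eq:state-P} and \eqref{eq:state-Q}.

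\emph{Step 3: identification with the optimizer.} By \corref{cor:canonical-feedback} and \eqref{eq:gain-equality}, the transformed optimizer $\widehat u^{t,\xi,*}$ equals $-\Theta\widehat X^{t,\xi,*}_-$. Its state $\widehat X^{t,\xi,*}$ therefore solves the $\Qq$-form of the closed-loop equation. By uniqueness in Step 2, $\widehat X^{t,\xi,*}=\bar X$ and $\widehat u^{t,\xi,*}=\bar u$, $\dd s\otimes\dd\Qq$-a.e., which is equivalent to $\dd s\otimes\dd\Pp$-a.e. Step 1 then gives \eqref{eq:canonical-P-integrability}. \thmref{thm:feedback} yields that $u^*=-\Theta X^*_-$ is the unique recursive optimizer on $\cU_{\Pp,\Qq}[t,T]$, with $X^*=\bar X$ solving \eqref{eq:closed-loop-P}.

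An alternative check of optimality is available. One could apply \eqref{eq:completion-squares} to $\bar u$: there $\Delta\bar u=0$, so $J^{\rm r}(t,\xi;\bar u)=\ip{P_t\xi}{\xi}=V^{\rm r}(t,\xi)$. This bypasses Step 3 for optimality, though uniqueness still comes from \thmref{thm:feedback}.

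\emph{Main obstacle.} The delicate point is the uniqueness identification in Step 3. The optimizer's state solves the closed loop only with the gain $\widehat\Theta$ valid $\dd s\otimes\dd\Qq$-a.e. This must be transferred into the Brownian and jump integrals. I would handle it as in \corref{cor:canonical-feedback}: compensation of the predictable exceptional set shows that the jump integrals agree. Since the coefficients are bounded, uniqueness then follows from a direct $L^2(\Qq)$ Gronwall argument on the difference of the two solutions. No localization is needed, because boundedness of $\Theta$ removes the energy clocks required in the general case.
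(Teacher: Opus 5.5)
Your proof is correct, but your main route to \eqref{eq:canonical-P-integrability} differs from the paper's. The paper's argument is essentially the ``alternative check'' you mention at the end.

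The paper first notes, as in your Steps~1--2, that the bounded-gain control $-\Theta X^*_-$ lies in the common domain. It then shows that this control makes the square in \thmref{thm:completion-squares} vanish, so it attains $\ip{P_t\xi}{\xi}=V^{\rm r}(t,\xi)$. Equality of values and uniqueness of the transformed optimizer then identify it with $\widehat u^{t,\xi,*}$, and \eqref{eq:canonical-P-integrability} follows.

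You instead start from the existing optimizer. By \corref{cor:canonical-feedback}, its state solves the $\Qq$-form of the closed loop. Uniqueness for that bounded-coefficient linear SDE forces it to coincide with $\bar X$, so it inherits the physical energy bound from Step~1. \thmref{thm:feedback} then supplies optimality, uniqueness and the feedback form.

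\emph{What each route buys.} Yours is a pure SDE-uniqueness argument with no cost computation. It does, however, depend on the internal identity \eqref{eq:feedback-before-Riccati} from the existence proof of \propref{thm:nonrecursive}. It also needs the compensation step for the jump integral, which is already carried out in the proof of \corref{cor:canonical-feedback}; so your ``main obstacle'' is not really delicate. The paper's route is a verification argument. It uses only regularity of the Riccati triple, boundedness of $\Theta$ and transformed uniqueness, which is the same mechanism as in \remref{rmk:verification-standalone} and the explicit examples.

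One small overstatement: with only $\xi\in\cX_t^2$, the closed-loop state has finite second moments, not moments of every order. Only the second moments are used, so nothing breaks.
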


\begin{proof}
Bounded closed-loop coefficients give an $\mathcal S^2$ solution under each probability; noise substitution and pathwise uniqueness identify them. Bounded $\Theta$ makes $u^*=-\Theta X^*_-$ a common-domain control. Its square in \thmref{thm:completion-squares} vanishes, proving optimality. Equality of values and transformed uniqueness identify $u^*$ with $\widehat u^{t,\xi,*}$ and give \eqref{eq:canonical-P-integrability}.
\end{proof}

\begin{corollary}\label{cor:Dzero}
Under the assumptions of \thmref{thm:RSREJ}, if $D=0$ almost everywhere, then $\Theta$ is essentially bounded. For every deterministic $t<T$ and $\xi\in\cX_t^2$, the recursive problem has a unique optimal control, given by $u^*=-\Theta X^*_-$. State-dependent Brownian noise and controlled jumps are still allowed.
\end{corollary}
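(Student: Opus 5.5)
The plan is to reduce everything to \propref{prop:bounded-gain}, so the only real content is that $D=0$ forces $\Theta$ to be essentially bounded. By \corref{cor:benchmarks}(3), $\Theta=\widehat\Theta=\widehat{\mathsf N}^{-1}\widehat{\mathsf M}^\transpose$. The proof of \lemref{lem:bounded-Xi} already shows that every term of $\widehat{\mathsf M}$ in \eqref{eq:Mhat} is bounded except $\widehat\Lambda D$. So with $D=0$ a.e., we get $\widehat{\mathsf M}=\widehat P_-B^\circ+\widehat S^\transpose+\int_{\Zmark}[E^\transpose\widehat P_-F+(I_n+E)^\transpose\widehat\Xi F]\dd\nu^\circ$.

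Each term in this expression is bounded:
\begin{itemize}
\item $\widehat P_-$ is bounded because the regular solution from \propref{thm:nonrecursive} is uniformly bounded.
\item $\widehat\Xi$ satisfies $|\widehat\Xi|\le2K$ in product measure by \lemref{lem:bounded-Xi}.
\item $B^\circ,E,F,\widehat S$ are bounded by \assref{ass:state}, \eqref{eq:transformed-drift} and \eqref{eq:transformed-weights}.
\item $\nu^\circ(\Zmark)<\infty$.
\end{itemize}
Hence $\widehat{\mathsf M}$ is essentially bounded on $[0,T]\times\Omega$. The lower bound $\widehat{\mathsf N}\succeq\delta I_m$ from \eqref{eq:Nhat-positive}, with $c_N=\delta$, gives $\|\widehat{\mathsf N}^{-1}\|\le\delta^{-1}$. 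Therefore $|\Theta|\le\delta^{-1}\|\widehat{\mathsf M}\|_\infty$ $\dd t\otimes\dd\Qq$-a.e. By equivalence of $\Pp$ and $\Qq$, the same bound holds $\dd t\otimes\dd\Pp$-a.e.

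One point needs care. The exceptional set $\{D\ne0\}$ is only a.e.\ null, but $\widehat\Lambda D$ enters $\widehat{\mathsf M}$ only through $\dd t\otimes\dd\Qq$-a.e.\ identities. So modifying $\Theta$ on a product-null predictable set does not affect the closed loop. This holds for the time and Brownian integrals directly, and for the jump integral by compensation, as in \corref{cor:canonical-feedback}.

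With $\Theta$ essentially bounded on $[t,T]\times\Omega$ for every deterministic $t<T$ and $\xi\in\cX_t^2$, \propref{prop:bounded-gain} gives the remaining conclusions:
\begin{itemize}
\item \eqref{eq:canonical-P-integrability} holds;
\item the unique optimizer in $\cU_{\Pp,\Qq}[t,T]$ is $u^*=-\Theta X^*_-$, with $X^*$ solving \eqref{eq:closed-loop-P};
\item uniqueness follows from \thmref{thm:feedback}.
\end{itemize}

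The final remark is immediate, since no restriction was placed on $C$, $E$ or $F$. State-dependent Brownian noise $C X_-\dd W$ and controlled jumps $F u$ therefore remain in the model. The only mildly delicate step is the measure-theoretic bookkeeping of the null sets on which $\widehat\Lambda$ is unbounded; the rest is routine.
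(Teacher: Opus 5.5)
Your proposal is correct and follows the paper's own argument: with $D=0$, the only potentially unbounded term $\widehat\Lambda D$ of $\widehat{\mathsf M}$ disappears, the remaining terms are bounded by the kernel bound and \lemref{lem:bounded-Xi}, the inverse is controlled by $\widehat{\mathsf N}\succeq\delta I_m$, and \propref{prop:bounded-gain} finishes the proof. The paper only makes your term-by-term bound explicit, as $\|\Theta\|_\infty\le\delta^{-1}[K\|B^\circ\|_\infty+\overline\kappa\|S\|_\infty+K(2+3E_*)F_*\nu^\circ(\Zmark)]$, where $K$ bounds $\|\widehat P\|$ and $E_*,F_*$ are the sup norms of $E,F$.
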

\begin{proof}
The only potentially unbounded term in \eqref{eq:Mhat} is $\widehat\Lambda D$. With $D=0$, boundedness of the kernel and \lemref{lem:bounded-Xi} bound every term. More explicitly, if $\|\widehat P\|\le K$, $E_*=\|E\|_\infty$ and $F_*=\|F\|_\infty$, then
\[
\|\Theta\|_\infty\le\delta^{-1}\big[K\|B^\circ\|_\infty+
\overline\kappa\|S\|_\infty+K(2+3E_*)F_*\nu^\circ(\Zmark)\big].
\]
The conclusion follows from \propref{prop:bounded-gain}.
\end{proof}

\begin{remark}\label{rmk:verification-standalone}
For a regular transformed solution, a $\Qq$-admissible feedback makes the square in \eqref{eq:completion-Q} vanish and attains the transformed value. If the feedback is also $\Pp$-admissible, scaling gives the recursive optimum. This form of \thmref{thm:completion-squares} directly verifies the explicit solutions below.
\end{remark}

\subsection{Comparison and uniqueness}

\begin{proposition}\label{prop:Riccati-maximality}
Under \propref{thm:nonrecursive}, every other regular transformed solution $(\widehat P',\widehat\Lambda',\widehat\Xi')$ for the same data satisfies $\widehat P'(s)\preceq\widehat P(s)$ for all $s\in[0,T]$ almost surely. The corresponding pulled-back kernels have the same order.
\end{proposition}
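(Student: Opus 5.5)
The comparison will come from the completion-of-squares identity of \thmref{thm:completion-squares}, which applies to \emph{any} regular transformed solution, since its proof uses only \defref{def:regular-Riccati} and \lemref{lem:bounded-Xi}. Apply it to the second solution $(\widehat P',\widehat\Lambda',\widehat\Xi')$, with its own gain $\widehat\Theta'$ and control weight $\widehat{\mathsf N}'\succeq c_N'\Id_m$. For every deterministic $t<T$, every $\xi\in L^2_{\Qq,\cF_t}(\R^n)$ and every $u\in\cU_{\Qq}[t,T]$ this gives
\[
\widehat J(t,\xi;u)=\ip{\widehat P'_t\xi}{\xi}+\E_t^{\Qq}\int_t^T w_s'^{\transpose}\widehat{\mathsf N}'_sw'_s\dd s\ \ge\ \ip{\widehat P'_t\xi}{\xi},
\]
where $w'=u+\widehat\Theta'X_-$. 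The integral term is nonnegative because $\widehat{\mathsf N}'\succeq0$.

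Taking the essential infimum over $u$ and using \eqref{eq:Vhat-quadratic} from \propref{thm:nonrecursive} gives $\ip{\widehat P'_t\xi}{\xi}\le\widehat V(t,\xi)=\ip{\widehat P_t\xi}{\xi}$ $\Qq$-a.s. The same inequality follows more directly by plugging in the optimizer $\widehat u^{t,\xi,*}$, which is $\Qq$-admissible. Now choose $\xi=x$ with $x$ deterministic and rational in $\R^n$. A countable union of null sets then yields $\widehat P'_t\preceq\widehat P_t$ almost surely, simultaneously for all rational $t\in[0,T)$. At $t=T$ both kernels equal $\widehat G$.

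To obtain the inequality for all $s$ simultaneously, use right-continuity of both c\`adl\`ag kernels. For $s\in[0,T)$, approximate $s$ from the right by rationals and pass to the limit in the Loewner order, which is closed. The equivalence of $\Pp$ and $\Qq$ then makes the statement almost sure under either measure. Finally, the pulled-back kernels satisfy $P'=\kappa^{-1}\widehat P'\preceq\kappa^{-1}\widehat P=P$, since $\kappa>0$ is a scalar.

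The only delicate point is checking that \thmref{thm:completion-squares} requires nothing from the other solution beyond regularity. In particular, it must not need uniform convexity or the identity $\widehat P'=\widehat V$. Its proof localizes with the clock $A_s+\int|K|^2$ and dominates $\mathcal B$ using only the boundedness of $\widehat P'$, the bound $|\widehat\Xi'|\le 2K'$ from \lemref{lem:bounded-Xi}, and pathwise square integrability of $\widehat\Theta'$. All of these hold for any regular solution. I would therefore cite the theorem as stated, noting that it is formulated for an arbitrary regular transformed solution.
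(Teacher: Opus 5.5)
Your proof is correct and follows the paper's own argument. The paper likewise applies \eqref{eq:completion-Q} to the second regular solution to get $x^\transpose\widehat P'(t)x\le\widehat J(t,x;u)$, takes the conditional infimum using $\widehat V(t,\xi)=\ip{\widehat P_t\xi}{\xi}$, and concludes via a countable dense set of vectors and times, right-continuity, the common terminal value $\widehat G$, and division by $\kappa>0$. Your check that \thmref{thm:completion-squares} needs only regularity of the candidate, and not uniform convexity or $\widehat P'=\widehat V$, is exactly what the paper's short proof implicitly relies on.
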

\begin{proof}
For any regular candidate, \eqref{eq:completion-Q} gives $x^\transpose\widehat P'(t)x\le\widehat J(t,x;u)$ for every transformed admissible control. Taking the conditional infimum gives the order. A countable dense set of vectors and times, right-continuity, and the common terminal value give the order simultaneously on $[0,T]$. Division by positive $\kappa$ preserves it.
\end{proof}

\begin{proposition}\label{prop:Riccati-uniqueness}
Under \namedref{Assumptions}{ass:state}, \ref{ass:weights} and \ref{ass:recursive-coeff}, there is at most one regular transformed solution whose closed-loop state/control pair is square integrable under $\Qq$ for every deterministic initial time and vector. The pulled-back equation likewise has at most one solution whose scaling belongs to this class. A bounded gain is sufficient for this admissibility.
\end{proposition}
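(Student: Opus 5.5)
The plan is to compare two candidates through completion of squares, \thmref{thm:completion-squares}, without using uniform convexity. Let $(\widehat P^1,\widehat\Lambda^1,\widehat\Xi^1)$ and $(\widehat P^2,\widehat\Lambda^2,\widehat\Xi^2)$ be regular transformed solutions in the stated class. Write $\widehat\Theta^i$ for their gains and $\widehat{\mathsf N}^i\succeq c_i\Id_m$ for their effective weights. Fix a deterministic $t<T$ and $x\in\R^n$. Let $X^i$ solve the closed-loop transformed equation with gain $\widehat\Theta^i$, and set $u^i=-\widehat\Theta^iX^i_-$. By hypothesis, $u^i\in\cU_{\Qq}[t,T]$ with square-integrable state under $\Qq$. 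Hence $u^i$ is an admissible open-loop control for \eqref{eq:state-Q}, and $X^i$ is its state.

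The first step applies \eqref{eq:completion-Q} for solution $i$ with control $u^i$. The square vanishes, so $\widehat J(t,x;u^i)=x^\transpose\widehat P^i_tx$. The second step applies \eqref{eq:completion-Q} for solution $j$ with the same control $u^i$. This gives
\[
x^\transpose\widehat P^i_tx=\widehat J(t,x;u^i)=x^\transpose\widehat P^j_tx+\E_t^{\Qq}\int_t^T(w^{ij})^\transpose\widehat{\mathsf N}^jw^{ij}\dd s\ge x^\transpose\widehat P^j_tx,
\]
where $w^{ij}=u^i+\widehat\Theta^jX^i_-$. Exchanging $i$ and $j$ gives $x^\transpose\widehat P^1_tx=x^\transpose\widehat P^2_tx$ a.s. Polarization over a countable dense set of vectors and rational times, together with right-continuity and the common terminal value $\widehat G$, then gives $\widehat P^1=\widehat P^2$ indistinguishably on $[0,T]$.

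The martingale coefficients are identified next. With $\widehat P^1=\widehat P^2$, both triples solve \eqref{eq:SREJ-hat}, so $\widehat P^1-\widehat P^2=0$ is a special semimartingale. Uniqueness of the canonical decomposition, with the continuous martingale part handled through quadratic variation and the jump part through compensation as in \lemref{lem:L1-representation}, gives $\widehat\Lambda^1=\widehat\Lambda^2$ and $\widehat\Xi^1=\widehat\Xi^2$ in the respective product measures. For the physical statement, pulled-back solutions whose $\kappa$-scalings lie in the class scale back to equal transformed triples, by the computation in the proof of \thmref{thm:RSREJ}. Division by $\kappa$ then gives uniqueness of the pulled-back solution. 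For the sufficiency claim: if the gain is bounded, the closed-loop linear SDE has bounded coefficients, so the $\mathcal S^2_{\Qq}$ estimate applies and $u=-\widehat\Theta X_-$ is square integrable. This is the argument of \propref{prop:bounded-gain}.

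The main obstacle is well-posedness of the closed-loop equation and the identification of its solution with the state of $u^i$ when $\widehat\Theta^i$ is unbounded. The class hypothesis is read as supplying a square-integrable closed-loop pair. Pathwise uniqueness is then needed so that the state of the open-loop equation driven by $u^i$ coincides with $X^i$. For this I would reuse the localization argument from the proof of \thmref{thm:feedback}: $\int|\widehat\Theta^i|^2\dd s<\infty$ a.s. by \lemref{lem:bounded-Xi}, the equation is linear between Poisson atoms, and finitely many linear updates occur at jumps. One must also confirm that \thmref{thm:completion-squares} requires only regularity of the solution, not \assref{ass:UC}, which holds as stated.
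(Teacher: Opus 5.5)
Your proposal is correct and follows the paper's argument. It uses completion of squares (\thmref{thm:completion-squares}) with the square vanishing along an admissible feedback, then polarization at rational vectors and times with right-continuity and the common terminal value, then uniqueness of the special-semimartingale decomposition for the integrands, and finally $\kappa$-scaling for the pulled-back equation. Your symmetric cross-comparison of the two candidates is a pairwise form of the paper's identification of each admissible kernel with the transformed value. The pathwise-uniqueness concern is unnecessary: $X^i$ already solves the bounded-coefficient open-loop equation driven by $u^i$, and that equation's uniqueness suffices.
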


\begin{proof}
An admissible candidate feedback makes the square in \eqref{eq:completion-Q} zero, identifying its kernel with the value. Polarization at rational vectors and times, right-continuity, and the common terminal value give indistinguishability. Uniqueness of the semimartingale decomposition identifies both integrands, and scaling transfers the result.
\end{proof}

\section{Analytical examples}\label{sec:finance}

We first consider an unconstrained scalar market with bounded predictable $r,\sigma,c$ and $1+c(s,e)\ge\varepsilon_c>0$. The bank account satisfies $\dd S_0=rS_0\dd s$. A risky asset with $S_1(0)>0$ and zero physical excess return satisfies
\begin{equation}\label{eq:risky-asset}
\frac{\dd S_1(s)}{S_1(s-)}=r(s)\dd s+\sigma(s)\dd W(s)+\int_{\Zmark}c(s,e)\widetilde\mu(\dd s,\dd e).
\end{equation}
Its stochastic exponential has positive jump multipliers $1+c$, hence positive price. With dollar exposure $u$ in the risky asset, wealth satisfies
\begin{equation}\label{eq:wealth-P}
\dd X_s=r_sX_s\dd s+\sigma_su_s\dd W_s+
\int_{\Zmark}c(s,e)u_s\widetilde\mu(\dd s,\dd e).
\end{equation}
Thus $A=r$, $B=C=E=0$, $D=\sigma$ and $F=c$.
Consider the recursive criterion generated by \eqref{eq:recursive-BSDE} with
$G=-\lambda/2$, $Q=S=0$, $R=\rho/2$ and $\lambda>0$, where $\rho$ is a bounded positive running-penalty process. The negative terminal weight rewards large absolute terminal wealth; the example therefore represents an indefinite quadratic criterion rather than risk-averse utility. Since $E=0$, \eqref{eq:jump-invertible} holds automatically.

Under $\Qq$, the wealth drift becomes
\begin{equation}\label{eq:wealth-Q}
\begin{aligned}
\dd X_s={}&(r_sX_s+\vartheta_su_s)\dd s+\sigma_su_s\dd W_s^\circ\\
&+\int_{\Zmark}c(s,e)u_s\widetilde\mu^\circ(\dd s,\dd e).
\end{aligned}
\end{equation}
where
\begin{equation}\label{eq:vartheta}
\vartheta(s):=\beta(s)\sigma(s)+\int_{\Zmark}\eta(e)c(s,e)\nu(\dd e).
\end{equation}
The aggregators produce an effective valuation-measure drift despite zero physical excess return. Its correction uses $\nu$, whereas the quadratic-variation terms below use $\nu^\circ$.

For this market, the physical Riccati coefficients are
\begin{equation}\label{eq:portfolio-Riccati-coefficients}
\begin{aligned}
\mathsf N={}&\frac{\rho}{2}+\sigma^2P_-+
\int_{\Zmark}c(e)^2[P_-+\Xi(e)]\nu^\circ(\dd e),\\
\mathsf M={}&\vartheta P_-+\sigma\Lambda+
\int_{\Zmark}c(e)\Xi(e)\nu^\circ(\dd e),\\
\mathsf H={}&2rP_-.
\end{aligned}
\end{equation}
The scalar physical Riccati equation is
\begin{equation}\label{eq:portfolio-RSREJ}
\left\{\begin{aligned}
\dd P={}&-\Big[(2r+a)P_-+\beta\Lambda+
\int_{\Zmark}\eta(e)\Xi(e)\nu(\dd e)\\
&\qquad-\frac{\mathsf M^2}{\mathsf N}\Big]\dd s
+\Lambda\dd W+\int_{\Zmark}\Xi(e)\widetilde\mu(\dd s,\dd e),\\
P(T)={}&-\lambda/2.
\end{aligned}\right.
\end{equation}
For a physically admissible canonical optimizer, the portfolio feedback is
\[
u^*=-\frac{\mathsf M}{\mathsf N}X^*_-.
\]
The following cases also admit direct verification.

\subsection{An exact drift-neutral convexity benchmark}

\begin{proposition}\label{prop:portfolio-UC}
Assume that $r,a$ are deterministic and bounded and that the bounded predictable market coefficients satisfy $\vartheta=0$ in \eqref{eq:vartheta}. The processes $\sigma,\rho,c$ may still be random. Put
\begin{align}\label{eq:K-v}
K(s,T)&:=\exp\left\{\int_s^T r(q)\dd q\right\},\notag\\
v^{\circ}(s)&:=\sigma(s)^2+\int_{\Zmark}c(s,e)^2\nu^{\circ}(\dd e),\\
h(s)&:=\kappa(s)\rho(s)-\lambda\kappa(T)K(s,T)^2v^{\circ}(s).
\label{eq:h-benchmark}
\end{align}
For any $t<T$, zero initial state and $u\in\cU_{\Pp,\Qq}[t,T]$,
\begin{equation}\label{eq:portfolio-cost-explicit}
J^{\mathrm r}(t,0;u)=\frac1{2\kappa(t)}
\E^{\Qq}\left[\int_t^Th(s)|u(s)|^2\dd s\,\Big|\,\cF_t\right].
\end{equation}
For every $u\in\cU_{\Qq}[t,T]$, the transformed identity $\widehat J(t,0;u)=\frac12\E_t^{\Qq}\int_t^T h(s)|u_s|^2\dd s$ holds. Uniform convexity of that full problem on $[0,T]$ is equivalent to
\begin{equation}\label{eq:portfolio-UC-exact}
\operatorname*{ess\,inf}_{(s,\omega)\in[0,T]\times\Omega}h(s,\omega)>0,
\quad\hbox{relative to }\dd s\otimes\dd\Qq.
\end{equation}
When $a,r,\rho,v^{\circ}$ are constants, this is equivalent to
\begin{equation}\label{eq:portfolio-constant-condition}
\rho>\lambda v^{\circ}\exp\{\max(a+2r,0)T\}.
\end{equation}
Under \eqref{eq:portfolio-UC-exact}, for $\xi\in\cX_t^2$, the value and unique optimizer on the common domain are
\begin{equation}\label{eq:neutral-value}
V^{\rm r}(t,\xi)=-\frac\lambda2e^{\int_t^T[a(s)+2r(s)]\dd s}\xi^2,\qquad u^*=0.
\end{equation}
\end{proposition}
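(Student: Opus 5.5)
The approach is to compute the transformed cost explicitly under $\Qq$ and then transfer the result. With $\vartheta=0$, equation \eqref{eq:wealth-Q} has no control in the drift, so the zero-initial state is
\[
X_s=\int_t^sK(q,s)\Big[\sigma_qu_q\dd W_q^\circ+\int_{\Zmark}c(q,e)u_q\widetilde\mu^\circ(\dd q,\dd e)\Big],
\]
because $r$ is deterministic and $K$ is the deterministic integrating factor. Here $Q=S=0$, $\widehat R=\kappa\rho/2$, and $\widehat G=-\lambda\kappa(T)/2$, where $\kappa$ is deterministic because $a$ is. Hence
\[
\widehat J(t,0;u)=-\frac{\lambda\kappa(T)}2\E_t^{\Qq}X_T^2+\frac12\E_t^{\Qq}\int_t^T\kappa\rho|u|^2\dd s.
\]
For $u\in\cU_{\Qq}[t,T]$ the integrands are in $L^2(\Qq)$, because $\sigma$ and $c$ are bounded and $\nu^\circ$ is finite. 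The conditional It\^o isometry for the orthogonal Brownian and compensated Poisson integrals then gives $\E_t^{\Qq}X_T^2=\E_t^{\Qq}\int_t^TK(s,T)^2v^\circ(s)|u_s|^2\dd s$. Substituting yields the transformed identity with $h$ as in \eqref{eq:h-benchmark}. The recursive formula \eqref{eq:portfolio-cost-explicit} on the common domain then follows from \eqref{eq:J-scaling}.

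For the equivalence with \eqref{eq:portfolio-UC-exact}, sufficiency is immediate: if $h\ge 2\delta$ a.e., then $\widehat J(t,0;u)\ge\delta\E_t^{\Qq}\int|u|^2$. For necessity, suppose \assref{ass:UC} holds with some $\delta$ but $\{h<2\delta-\epsilon\}$ has positive $\dd s\otimes\dd\Qq$ measure for some $\epsilon>0$. This set is predictable, because $\sigma,\rho,c$ are predictable and $\kappa,K$ are deterministic. Take $u=\1_{\{h<2\delta-\epsilon\}\cap\{s>t\}}$ for a deterministic $t$ chosen so that the set restricted to $(t,T]$ still has positive measure; such a $t$ exists, for instance $t=0$. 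Taking expectations of the identity gives $\E^{\Qq}\int h|u|^2<2\delta\E^{\Qq}\int|u|^2$, which contradicts the expected form of \eqref{eq:UC}. Thus uniform convexity implies $\operatorname{ess\,inf}h\ge2\delta>0$.

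In the constant case $h(s)=\kappa_0e^{as}[\rho-\lambda v^\circ e^{(a+2r)(T-s)}]$, whose bracket has infimum over $s\in[0,T]$ equal to $\rho-\lambda v^\circ e^{\max(a+2r,0)T}$. A positive bracket infimum is equivalent to $\operatorname{ess\,inf}h>0$, because the factor $\kappa_0e^{as}$ is bounded above and away from zero. This gives \eqref{eq:portfolio-constant-condition}.

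For the value, I would not solve the Riccati equation abstractly. Instead I would verify the deterministic candidate $\widehat P(s)=-\frac\lambda2\kappa(T)K(s,T)^2$ with $\widehat\Lambda=\widehat\Xi=0$, using the linearity of the state in $(\xi,u)$. Write $X=\xi K(t,\cdot)+X^u$, where $X^u$ is the zero-initial part. The cross term $\E_t^{\Qq}[\xi K(t,T)X_T^u]$ vanishes because $X^u_T$ is a $\Qq$-martingale increment with conditional mean zero given $\cF_t$; this is exactly where $\vartheta=0$ is used. Hence
\[
\widehat J(t,\xi;u)=-\frac\lambda2\kappa(T)K(t,T)^2\xi^2+\widehat J(t,0;u)\ge-\frac\lambda2\kappa(T)K(t,T)^2\xi^2,
\]
with equality if and only if $u=0$ a.e., since $h$ is bounded away from zero. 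The control $u=0$ lies in $\cU_{\Pp,\Qq}$. Applying \propref{prop:value-equality} and dividing by $\kappa(t)$, with $\kappa(T)/\kappa(t)=e^{\int_t^Ta}$ and $K(t,T)^2=e^{\int_t^T2r}$, gives \eqref{eq:neutral-value}. Uniqueness on the common domain follows from the strict inequality for $u\ne0$.

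The main obstacle is the necessity direction. Its delicacy lies in justifying that the bad set is predictable and can be used as a control started at a deterministic time, and in passing from the conditional form of \eqref{eq:UC} to its expected form. I expect both points to be routine, given that $\kappa$ and $K$ are deterministic.
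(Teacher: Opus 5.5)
Your proposal is correct and follows the paper's proof essentially step for step: the conditional It\^o isometry for the zero-initial $\Qq$-state, the indicator test $u=\1_{\{h<2\delta\}}$ at $t=0$ for necessity, maximization of $e^{(a+2r)(T-s)}$ in the constant case, and the decomposition $X_T=K(t,T)\xi+M_T$ with $\E_t^{\Qq}M_T=0$ for the value and uniqueness. The Riccati candidate $\widehat P$ you mention is not actually used by your direct argument (the paper records the corresponding triple only as a consistency check), and the appeal to \propref{prop:value-equality} can be dropped because $u=0$ already lies in the common domain.
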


\begin{proof}
For zero initial state and $u\in\cU_{\Qq}[t,T]$, the terminal state is the sum of the Brownian and compensated-Poisson integrals with integrands $K(s,T)\sigma_su_s$ and $K(s,T)c(s,e)u_s$. Their orthogonality and conditional isometry give
\[
\E_t^{\Qq}|X_T|^2
=\E_t^{\Qq}\int_t^TK(s,T)^2v^\circ(s)|u_s|^2\dd s.
\]
Deterministic $\kappa(T)$ gives the transformed cost identity, and \eqref{eq:J-scaling} gives \eqref{eq:portfolio-cost-explicit}. A bound $h\ge\varepsilon>0$ implies uniform convexity. Conversely, testing $u=\1_{\{h<2\delta\}}$ at $t=0$ shows that uniform convexity with constant $\delta$ requires $h\ge2\delta$ in product measure. For constant data, divide $h(s)$ by $\kappa(s)$ and maximize $e^{(a+2r)(T-s)}$ to obtain \eqref{eq:portfolio-constant-condition}.

For general $\xi$, $X_T=K(t,T)\xi+M_T$, where $\E_t^{\Qq}M_T=0$ and $M$ is the zero-initial martingale integral. Thus
\[
\begin{aligned}
J^{\mathrm r}(t,\xi;u)={}&-\frac{\lambda\kappa(T)K(t,T)^2}{2\kappa(t)}\xi^2\\
&+\frac{1}{2\kappa(t)}\E_t^{\Qq}\int_t^Th(s)|u_s|^2\dd s.
\end{aligned}
\]
This proves \eqref{eq:neutral-value} and uniqueness. Its Riccati triple is $P(s)=-\frac\lambda2e^{\int_s^T(a+2r)\dd q}$, $\Lambda=\Xi=0$, with $\mathsf M=0$, $\mathsf N=\rho/2+Pv^\circ$ and $2\kappa\mathsf N=h$.
\end{proof}

\begin{remark}\label{rmk:finance-boundaries}
If $h<0$ on a set of positive product measure, scaling a bounded predictable control supported there drives the unconditional zero-initial transformed cost to $-\infty$; if $h\ge0$ but its essential infimum is zero, the zero-initial cost is nonnegative but not uniformly convex. Keeping $r$ deterministic, when $\vartheta\ne0$ the terminal wealth also contains $\int K(s,T)\vartheta(s)u(s)\dd s$, whose square is not a pointwise multiple of $|u(s)|^2$; one must then check the transformed quadratic functional or establish a Riccati verification with admissibility.
\end{remark}

\subsection{A recursive example with nonzero feedback}

\begin{example}\label{ex:nonzero-feedback}
Take a unit-rate, single-mark Poisson process and constants $\sigma>0$, $c>-1$, $\beta,\eta$ with $\eta>-1$. Let $r=a=0$, $\vartheta=\beta\sigma+\eta c\ne0$ and $v^\circ=\sigma^2+(1+\eta)c^2$. For $k,\lambda>0$, choose
$\rho(s)=\lambda(v^{\circ}+\vartheta^2/k)e^{k(T-s)}$.
With $\kappa=1$, equation \eqref{eq:RSREJ} has the solution
\begin{align}\label{eq:constructed-solution}
P(s)&=-\frac\lambda2e^{k(T-s)},\qquad\Lambda=\Xi=0,\notag\\
\mathsf N(s)&=\frac{\lambda\vartheta^2}{2k}e^{k(T-s)}>0,
\qquad\mathsf M(s)=\vartheta P(s),\notag\\
\Theta(s)&=-\frac{k}{\vartheta},
\qquad u^*(s)=\frac{k}{\vartheta}X^*(s-).
\end{align}
Indeed, $\dot P=-kP=\mathsf M^2/\mathsf N$ and $P(T)=-\lambda/2$. Here $D=\sigma>0$. Bounded gain gives admissibility under both measures for $\xi\in\cX_t^2$; \remref{rmk:verification-standalone} then yields $V^{\mathrm r}(t,\xi)=P(t)\xi^2$. For zero initial state, set $w=u+\Theta X_-$. The bounded coefficients of the state equation driven by $w$ imply
\[
\E_t^{\Qq}\int_t^T|u|^2\dd s
\le C\E_t^{\Qq}\int_t^T|w|^2\dd s.
\]
Together with $\inf_s\mathsf N(s)>0$ and \eqref{eq:completion-Q}, this proves uniform convexity.
\end{example}

\subsection{A pure-jump problem with negative control weight}

\begin{example}\label{ex:negative-R-jumps}
Let $N$ be a unit-rate Poisson process under $\Pp$, with one mark. Take $A=B=C=D=a=\beta=0$, $E=F=\eta=1$, $Q=2$, $R=-1/2$, $S=0$ and $G=1$.
The equations are
\begin{equation}\label{eq:negative-R-model}
\begin{aligned}
\dd X_s&=(X_{s-}+u_s)(\dd N_s-\dd s),\quad X_t=\xi,\\
-\dd Y_s&=(2X_{s-}^2-\tfrac12u_s^2+U_s)\dd s\\
&\quad-Z_s\dd W_s-U_s(\dd N_s-\dd s),\quad Y_T=X_T^2.
\end{aligned}
\end{equation}
Here $\mathcal E_s=2^{N_s}e^{-s}$, $\kappa=1$, and $N$ has intensity $2$ under $\Qq$. Thus $A^\circ=B^\circ=1$. The constant Riccati solution is
\begin{equation}\label{eq:negative-R-kernel}
\begin{gathered}
P=\widehat P=1,\quad\Lambda=\Xi=0,\\
\mathsf N=\tfrac32,\quad\mathsf M=3,\quad\mathsf H=6,\quad\Theta=2.
\end{gathered}
\end{equation}
Indeed, $\mathsf H-\mathsf M^2/\mathsf N=0$ and $P_T=G$. For $u\in\cU_{\Qq}[t,T]$, It\^o's formula for $X^2$ gives
\begin{equation}\label{eq:negative-R-square}
\widehat J(t,\xi;u)=\xi^2+\tfrac32\E_t^{\Qq}
\int_t^T(u_s+2X_{s-})^2\dd s.
\end{equation}
At zero initial state, put $w=u+2X_-$. The drift of $X^2$ is then $2w^2-2X_-w\le X_-^2+3w^2$. Conditional Gronwall and $u=w-2X_-$ give
\[
\E_t^{\Qq}\int_t^T|u_s|^2\dd s
\le(2+24Te^T)\E_t^{\Qq}\int_t^T|w_s|^2\dd s.
\]
Hence \assref{ass:UC} holds with $\delta=3/(4+48Te^T)>0$, although $R<0$.

The optimal pair and value are
\begin{equation}\label{eq:negative-R-optimum}
\begin{aligned}
u_s^*&=-2X_{s-}^*,\qquad
X_s^*=\xi e^{s-t}\1_{\{N_s-N_t=0\}},\\
V^{\mathrm r}(t,\xi)&=\xi^2.
\end{aligned}
\end{equation}
For $\xi\in\cX_t^2$, the bound $\sup_s|X_s^*|\le e^T|\xi|$ gives admissibility under both measures, and \eqref{eq:negative-R-square} proves optimality and uniqueness. Here $D=0$ and $1+E=2$, while the closed-loop multiplier $1+E-F\Theta=0$ resets the state at a jump. Adding a second control with cost $u_2^2/2$ and no state effect gives $R=\operatorname{diag}(-1/2,1/2)$. Removing the jump channel while retaining the original weights makes the zero-initial cost unbounded below, as seen by scaling deterministic controls. The jump channel is therefore essential to uniform convexity in this example.
\end{example}

\section{Conclusion}\label{sec:conclusion}
The common control domain gives $L^1$ recursive evaluation and exact value reduction. Cost comparison and restarted optimal trajectories identify the Riccati equation; conditional energy estimates justify its physical-measure pullback. Completion of squares yields feedback, comparison, uniqueness in the feedback-admissible class and an attainment criterion. Bounded gain ensures attainment. Further work may address weaker admissibility conditions, random aggregators, infinite activity, affine data and singular control weights.

\appendix
\section{Physical-measure first-order verification}\label{app:direct-check}

We express the feedback in physical-measure adjoint variables and verify its first variation. For related maximum-principle methods for mean-field jump diffusions with delay, see \citet{ShenMengShi2014}.

For a fixed initial time $t$, define the positive recursive multiplier
$\chi^t(s):=\kappa(s)\mathcal E(s)/(\kappa(t)\mathcal E(t))$, $s\in[t,T]$.
Its drift is $a\chi^t$, its Brownian coefficient is $\beta\chi^t_-$, and its jump coefficient is $\eta\chi^t_-$. For the optimal pair and its bounded predictable variations, the moment bounds below justify Bayes' formula and conditional Fubini, giving
\begin{equation}\label{eq:physical-cost-representation}
\begin{aligned}
J^{\rm r}(t,\xi;u)=\E_t^{\Pp}\Big[&\chi^t(T)\ip{GX_T}{X_T}\\
&+\int_t^T\chi^t(s)\ell(s,X_{s-},u_s)\dd s\Big].
\end{aligned}
\end{equation}
Here the initial state and feedback gain are bounded, as specified in the next proposition.

\begin{proposition}\label{prop:direct-adjoint}
Let the assumptions of \thmref{thm:RSREJ} hold.  Assume in addition that $\Theta$ is essentially bounded and $\xi\in L^\infty_{\cF_t}(\R^n)$.  Then \propref{prop:bounded-gain} supplies the optimal pair $(X^*,u^*)$, satisfying \eqref{eq:canonical-P-integrability}. Put $\Sigma^*=CX^*_-+Du^*$ and $\Gamma^*(e)=E(e)X^*_-+F(e)u^*$. Suppressing time arguments, define
\begin{equation}\label{eq:adjoint-definitions}
\begin{aligned}
p={}&2\chi^t PX^*,\\
q={}&2\chi^t_-[\Lambda X^*_-+P_-\Sigma^*+\beta P_-X^*_-],\\
\varrho(e)={}&2\chi^t_-\big[(1+\eta(e))(P_-+\Xi(e))\\
&\qquad\times(X^*_-+\Gamma^*(e))-P_-X^*_-\big].
\end{aligned}
\end{equation}
They satisfy
\begin{equation}\label{eq:physical-adjoint}
\left\{
\begin{aligned}
\dd p={}&-\Big[A^\transpose p_-+C^\transpose q
+\int_{\Zmark}E(e)^\transpose\varrho(e)\nu(\dd e)\\
&\qquad+2\chi^t\big(QX^*_-+S^\transpose u^*\big)\Big]\dd s\\
&+q\dd W+\int_{\Zmark}\varrho(e)\widetilde\mu(\dd s,\dd e),\\
p(T)={}&2\chi^t(T)GX^*(T).
\end{aligned}
\right.
\end{equation}
and the stationarity condition
\begin{equation}\label{eq:physical-stationarity}
\begin{aligned}
&B^\transpose p_-+D^\transpose q+
\int_{\Zmark}F(e)^\transpose\varrho(e)\nu(\dd e)\\
&\qquad+2\chi^t\big(SX^*_-+Ru^*\big)=0
\end{aligned}
\end{equation}
holds $\dd s\otimes\dd\Pp$-a.e.  Moreover, for every bounded predictable direction $v$, the G\^ateaux derivative of the original recursive criterion satisfies
$DJ^{\mathrm r}(t,\xi;u^*)[v]=0$ in $L^1(\Omega,\cF_t,\Pp)$.
\end{proposition}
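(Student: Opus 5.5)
Under bounded $\Theta$ and bounded $\xi$, \propref{prop:bounded-gain} yields the optimal pair $(X^*,u^*)$ with $u^*=-\Theta X^*_-$. The closed loop \eqref{eq:closed-loop-P} has bounded coefficients, so $X^*$ has moments of every order under $\Pp$. The density $\mathcal E$ has all moments by \lemref{lem:density-moments}, and $\kappa$ is bounded, so $\chi^t$ does as well. By \thmref{thm:RSREJ}, $\Lambda$ and $\Xi$ have all integrated moments under $\Pp$. These facts will justify every martingale and Fubini step below.

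\emph{Step 1: the adjoint equation.} We apply It\^o's product rule to $p=2\chi^tPX^*$, using the dynamics of $\chi^t$ (drift $a\chi^t$, Brownian coefficient $\beta\chi^t_-$, jump coefficient $\eta\chi^t_-$), the equation \eqref{eq:RSREJ} for $P$, and the closed loop \eqref{eq:closed-loop-P}.

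The Brownian coefficient collects $\chi_-\Lambda X_-+\chi_-P_-\Sigma^*+\beta\chi_-P_-X_-$, which is $q/2$. The raw joint jump at mark $e$ is
\[
(1+\eta)\chi_-(P_-+\Xi)(X_-+\Gamma^*)-\chi_-P_-X_-,
\]
which is $\varrho/2$. The finite-variation part contains the cross-covariation terms between the three factors. We rewrite the compensated jump with $\widetilde\mu$, so the compensator $\nu$ appears.

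Next we substitute the drift $\mathfrak F$ of \eqref{eq:physical-drift} and expand $\mathsf H,\mathsf M,\mathsf N$ from \eqref{eq:Nrec}--\eqref{eq:Hrec}. We also use $\nu^\circ=(1+\eta)\nu$ and $A^\circ,B^\circ$ from \eqref{eq:transformed-drift}. The claim to check is that the drift equals
\[
-[A^\transpose p_-+C^\transpose q+\textstyle\int E^\transpose\varrho\,\nu(\dd e)+2\chi(QX_-+S^\transpose u^*)]
\]
exactly when the Riccati quadratic term $-\mathsf M\mathsf N^{-1}\mathsf M^\transpose X_-$ is combined with $\mathsf M u^*=-\mathsf M\Theta X_-$. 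This uses the identity $\mathsf M^\transpose=\mathsf N\Theta$. The $a$-terms cancel between $\dd\chi$ and $aP_-$. The $\beta$- and $\eta$-terms cancel with the $\beta\Lambda$ and $\int\eta\Xi\,\dd\nu$ terms and with the $\beta,\eta$ parts of $A^\circ$ and $B^\circ$. The terminal value is immediate from $P_T=G$.

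\emph{Step 2: stationarity.} We compute
\[
B^\transpose p_-+D^\transpose q+\textstyle\int F^\transpose\varrho\,\dd\nu+2\chi(SX_-+Ru^*).
\]
After factoring $2\chi_-$, and using $\nu^\circ=(1+\eta)\nu$, this equals $2\chi_-(\mathsf M^\transpose X_-+\mathsf N u^*)$. The terms combine as follows:
\begin{itemize}
\item $B^\transpose P_-+\beta D^\transpose P_-+\int\eta F^\transpose P_-\,\dd\nu$ gives $(B^\circ)^\transpose P_-$;
\item $\int F^\transpose[(1+\eta)(P_-+\Xi)(I+E)-P_-]\,\dd\nu$ produces the remaining $\mathsf M^\transpose$ jump terms together with the $-\eta F^\transpose P_-$ compensation;
\item the $u^*$-coefficients produce $\mathsf N$.
\end{itemize}
Since $\mathsf N\Theta=\mathsf M^\transpose$, the expression vanishes $\dd s\otimes\dd\Pp$-a.e. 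Left limits versus current values of $\chi$ and $P$ are harmless in the $\dd s$ integrals.

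\emph{Step 3: Gâteaux derivative.} We take a bounded predictable direction $v$ and let $\delta X$ solve \eqref{eq:state-P} with control $v$ and zero initial state; $\delta X$ has all moments. Differentiating \eqref{eq:physical-cost-representation}, which the moment bounds justify, gives
\[
DJ^{\rm r}[v]=\E_t^\Pp\Big[2\chi_T\langle GX^*_T,\delta X_T\rangle+\int 2\chi(\langle QX^*+S^\transpose u^*,\delta X\rangle+\langle SX^*+Ru^*,v\rangle)\dd s\Big].
\]
We apply It\^o's formula to $\langle p,\delta X\rangle$. The drifts cancel the $Q,S^\transpose$ terms by \eqref{eq:physical-adjoint}, leaving the $v$-terms, which combine into \eqref{eq:physical-stationarity} and hence vanish. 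The stochastic integrals are true martingales, because $p,q,\varrho$ have integrable products with $\delta X$ by Hölder's inequality and the moment bounds. Taking $\E_t^\Pp$ then gives $DJ^{\rm r}[v]=0$.

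\emph{Main obstacle.} The obstacle is the bookkeeping in Step 1, specifically matching the $\nu$- versus $\nu^\circ$-weighted jump terms and the $\beta$ cross terms so that the quadratic Riccati term cancels exactly. The martingale integrability involving $q$ is secondary, since $q\sim\Lambda X$ requires the $\Pp$-moments of $\Lambda$ from \thmref{thm:RSREJ}.
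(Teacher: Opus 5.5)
Your proposal is correct and follows essentially the same route as the paper. You apply the product rule to $2\chi^tPX^*$ with the Riccati drift $\mathfrak F$, reconcile the $\nu$- and $\nu^\circ$-weighted terms through $\nu^\circ=(1+\eta)\nu$ and $A^\circ,B^\circ$, obtain stationarity from $\mathsf M^\transpose=\mathsf N\Theta$, and use It\^o's formula for $\langle p,\delta X\rangle$ to make the first variation vanish. The only difference is presentational: the paper first computes for an arbitrary pair $(X,u)$, obtaining the residuals $2\chi^t\mathsf M(u+\Theta X_-)$ and $2\chi^t\mathsf N(u+\Theta X_-)$, and only then sets $u=u^*$, whereas you substitute $u^*=-\Theta X^*_-$ directly.
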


\begin{proof}
Bounded gain, initial state and direction $v$ give all finite moments of $\sup|X^*|$, $\sup|X_1|$ and $\mathop{\rm ess\,sup}|u^*|$, where $X_1=X^{t,0;v}$. Density-ratio suprema have the same property by \lemref{lem:density-moments} and maximal inequalities. The Riccati moments from \thmref{thm:RSREJ}, H\"older's inequality and \lemref{lem:bounded-Xi} give $p\in\mathcal S^2_{\Pp}$, globally square-integrable $q,\varrho$, and integrability of the products and variations below.

Replace $(X^*,u^*)$ by $(X,u)$ in \eqref{eq:adjoint-definitions} to define $(p^u,q^u,\varrho^u)$. Localizing the state, multiplier and energy clocks, apply the product rule to $2\chi^t PX$. With $b_X=AX_-+Bu$, $b_P=-\mathfrak F$, $\Sigma=CX_-+Du$ and $\Gamma=EX_-+Fu$, the drift divided by $2\chi^t_-$ is
\[
\begin{aligned}
&aPX_-+b_PX_-+Pb_X+\Lambda\Sigma
+\beta(\Lambda X_-+P\Sigma)\\
&\quad+\int_{\Zmark}\Big[\Xi\Gamma+
\eta(P\Gamma+\Xi X_-+\Xi\Gamma)\Big]\dd\nu.
\end{aligned}
\]
Here $P=P_-$. The continuous covariations include $\beta P\Sigma$, and the integral includes the triple jump product. Subtracting the adjoint drift and dividing by $2\chi^t_-$ leaves the coefficient $\mathsf M$ on $u$. On $X_-$, the Riccati equation gives
$\mathsf H+aP+\beta\Lambda+\int\eta\Xi\dd\nu-\mathfrak F
=\mathsf M\mathsf N^{-1}\mathsf M^\transpose$.
Thus the residual is $2\chi^t\mathsf M(u+\Theta X_-)$. The terminal condition follows from $P_T=G$.
Collecting terms in the stationarity expression gives
\begin{align}
&B^\transpose p^u_-+D^\transpose q^u
+\int_{\Zmark}F(e)^\transpose\varrho^u(e)\nu(\dd e)\notag\\
&\qquad+2\chi^t(SX_-+Ru)\notag\\
&\qquad=2\chi^t\big(\mathsf M^\transpose X_-+\mathsf N u\big)
=2\chi^t\mathsf N\big(u+\Theta X_-\big).
\label{eq:stationarity-residual}
\end{align}
For $u=u^*=-\Theta X^*_-$, both residuals vanish, proving \eqref{eq:physical-adjoint} and \eqref{eq:physical-stationarity}.

Linearity gives $X^{t,\xi;u^*+\varepsilon v}=X^*+\varepsilon X_1$. The quadratic cost expansion and the preceding moment bounds give the $L^1(\Pp)$ derivative
\begin{equation}\label{eq:first-variation-cost}
\begin{aligned}
&DJ^{\mathrm r}(t,\xi;u^*)[v]\\
&=2\E_t^{\Pp}\Bigg[\chi^t(T)\ip{GX^*(T)}{X_1(T)}\\
&\quad+\int_t^T\chi^t(s)\Big\{\ip{QX^*_-+S^\transpose u^*}{X_{1,-}}\\
&\hspace{8em}+\ip{SX^*_-+Ru^*}{v}\Big\}\dd s\Bigg].
\end{aligned}
\end{equation}
Finally, apply the product formula to $\ip{p}{X_1}$ and use \eqref{eq:physical-adjoint} and the state equation for $X_1$; the state-variation terms cancel, giving $DJ^{\mathrm r}(t,\xi;u^*)[v]=0$ by \eqref{eq:physical-stationarity}.
\end{proof}

\end{document}